\newcommand{\res}{\mathrm{res}}
\newcommand{\qekv}{\quad \Longleftrightarrow \quad}
\newcommand{\dil}{\mathrm{dil}}
\def\uphar{{\upharpoonright}}
\def\sD{{\mathfrak D}}
\def\dD{{\mathbb D}}
   \def\dN{{\mathbb N}}   
   \def\dT{{\mathbb T}}
\def\cD{{\mathcal D}}      
   \def\cH{{\mathcal H}}   
   \def\cK{{\mathcal K}}   \def\cL{{\mathcal L}}
   \def\cN{{\mathcal N}}   
      \def\cU{{\mathcal U}}
      \def\cX{{\mathcal X}}
\def\cY{{\mathcal Y}}
\def\bS{{\mathbf S}}
\def\d1{{\mathcal D}}
\def\bm\chi{\mbox{\boldmath$\chi$}}
\def\ran{{\rm ran\,}}
\let\xker=\ker \def\ker{{\xker\,}}
\def\cspan{{\rm \overline{span}\, }}
\def\d1{{\mathcal D}}
\def\SK{{\bS_{\kappa}( \cU,\cY)}}
\def\So{{\bS( \cU,\cY)}}
\def\T{{\T_{\Sigma}}}
 \newcommand{\Sys}{\Sigma=(T_{\Sigma};\cX,\cU,\cY;\kappa)}
 \newcommand{\sys}{\Sigma=(A,B,C,D;\cX,\cU,\cY;\kappa)}
\newtheorem{theorem}{Theorem}[section]
\newtheorem{lemma}[theorem]{Lemma}
\newtheorem{proposition}[theorem]{Proposition}
\numberwithin{equation}{section}
\theoremstyle{definition}
\newtheorem{definition}[theorem]{Definition}
\newtheorem{remark}[theorem]{Remark}
\newtheorem{example}[theorem]{Example}
\newtheorem*{remark*}{Remark}
\begin{document}

%-------------------------------------------------------------------------
% editorial commands: to be inserted by the editorial office
%
%\firstpage{1} \volume{228} \Copyrightyear{2004} \DOI{003-0001}
%
%
%\seriesextra{Just an add-on}
%\seriesextraline{This is the Concrete Title of this Book\br H.E. R and S.T.C. W, Eds.}
%
% for journals:
%
%\firstpage{1}
%\issuenumber{1}
%\Volumeandyear{1 (2004)}
%\Copyrightyear{2004}
%\DOI{003-xxxx-y}
%\Signet
%\commby{inhouse}
%\submitted{March 14, 2003}
%\received{March 16, 2000}
%\revised{June 1, 2000}
%\accepted{July 22, 2000}
%
%
%
%---------------------------------------------------------------------------
%Insert here the title, affiliations and abstract:
%

\title[Minimal realizations]
{Minimal passive realizations of generalized Schur functions in Pontryagin spaces}

%----------Author 1
\author[L. Lilleberg]{Lassi Lilleberg}

\address{Department of Mathematics and Statistics\\
University of Vaasa\\ P.O. Box 700\\ 65101 Vaasa\\
Finland}

\email{lassi.lilleberg@uva.fi}

%\thanks{This work was completed with the support of our
%\TeX-pert.}
%----------Author 2
%\author{A Second Author}
%\address{The address of\br
%the second author\br
%sitting somewhere\br
%in the world}
%\email{dont@know.who.knows}
%%----------classification, keywords, date
\subjclass{Primary:  47A48; Secondary: 	47A56,	47B50, 93B05, 93B07,	93B28}

\keywords{Operator colligation, passive system,   transfer function, defect functions, generalized Schur class,  contractive operator}

%\date{\today}
%----------additions
%\dedicatory{To my boss}
%%% ----------------------------------------------------------------------

\begin{abstract}
Passive discrete-time systems in Pontryagin space setting are investigated. In this case the transfer functions of passive systems, or characteristic functions of  contractive operator colligations, are generalized Schur functions.
The existence of optimal and $^*$-optimal minimal realizations for generalized Schur functions are proved. By using those realizations, a
new definition, which covers the case of generalized Schur functions, is given for defects functions. A criterion due to D.Z. Arov and M.A. Nudelman, when all  minimal passive realizations of the same Schur function  are unitarily similar, is generalized to the class of generalized Schur functions. The approach used here is new; it relies completely on the theory of passive systems.
\end{abstract}

\maketitle

%\tableofcontents

\section{Introduction}
 An \textbf{operator colligation} $\Sigma=(T_{\Sigma};\cX,\cU,\cY;\kappa)$
   consists of separable Pontryagin spaces $\cX$ %with negative index $\kappa$
    (the \textbf{state space}),
   %Hilbert spaces
   $\cU$  (the \textbf{incoming space}), and $\cY$ (the \textbf{outgoing space})
   and the  \textbf{system operator} $T_{\Sigma} \in   \cL(\cX\oplus\cU,\cX\oplus\cY),$
                                                                                                                                                                                  the space of bounded operators from $\cX\oplus\cU $  to $\cX\oplus\cY,$ where $\cX \oplus \cU,$ or    { \tiny $\begin{pmatrix}
                                                                                                                                                                                     \cX \\
                                                                                                                                                                                     \cU
                                                                                                                                                                                   \end{pmatrix} %^\intercal
                                                                                                                                                                                   $}, means the direct orthogonal sum with respect to the indefinite inner product.
  The symbol $\kappa$ is
 reserved for the finite negative index of the state space.   The operator $T_{\Sigma}$
   has the block representation of the form
\begin{equation}\label{colli}
T_{\Sigma}=\begin{pmatrix} A&B \cr C&D\end{pmatrix} :
\begin{pmatrix} \cX \\ \cU \end{pmatrix} \to
\begin{pmatrix} \cX \\ \cY \end{pmatrix},
\end{equation} where $A \in \cL(\cX)$ (the \textbf{main operator}), $B \in \cL(\cU,\cX)$ (the \textbf{control operator}),
 $C \in \cL(\cX,\cY)$ (the \textbf{observation
operator}), and $D \in \cL(\cU,\cY)$ (the \textbf{feedthrough operator}). If needed,  the colligation is  written as
 $\Sigma=(A,B,C,D;\cX,\cU,\cY;\kappa).$  It is always assumed in this paper that $\cU$ and $\cY$ have the same negative index.
  %This is indicated by the notation $\Sigma=(T_{\Sigma};\cX,\cU,\cY;\kappa),$ where the symbol $\kappa$ is
 %reserved for the negative index of the state space.
 All  notions of continuity and convergence are understood
to be with respect to the strong topology, which is induced by any fundamental decomposition of the space in question. %The negative index of $\cU$ and $\cY$ does not depend on $\kappa,$ % and it is $\Sigma=(T_{\Sigma};\cX,\cU,\cY)$

The colligation \eqref{colli} will be called as a \textbf{system}
 since it can be seen as a \textbf{linear discrete time system}
of the form
\begin{equation*}
%\label{system}
\begin{cases}
 h_{k+1} &=Ah_k+B\xi_k, \\
  \sigma_{k} &=Ch_k+D\xi_k,
\end{cases}\quad k\geq 0,
\end{equation*}
where $\{h_k\}\subset \cX$, $\{\xi_k\}\subset \cU$ and
$\{\sigma_k\}\subset \cY.$ In what follows, the "system" is identified with the operator expression appearing in \eqref{colli}. %, since other kind of systems are not considered.
 When the system operator $T_{\Sigma}$ in \eqref{colli}
is  contractive (isometric, co-isometric, unitary), with respect to the indefinite inner product,  the corresponding  system is called
\textbf{passive} (isometric, co-isometric, conservative).
% If  $T_{\Sigma}$ is isometric
%(co-isometric, unitary), then the corresponding system is called isometric
%(co-isometric, conservative).
 In literature, conservative systems are also called unitary systems.  The \textbf{transfer function }of the system \eqref{colli}
is defined by
\begin{equation}
\label{trans} \theta_\Sigma(z):=D+z C(I-z
A)^{-1}B,
\end{equation} whenever $I-z
A$ is invertible. Especially, $\theta_\Sigma$ is defined and holomorphic in a neighbourhood of the origin.   The values $\theta_{\Sigma}(z)$ are bounded operators from $\cU$ to $\cY.$ Conversely, if $\theta$ is an operator valued function  holomorphic in a neighbourhood of the origin, and transfer function of the system $\Sigma$ coinsides with it, then $\Sigma$ is a \textbf{realization} of $\theta.$ In some sources,
transfer functions of the systems are also called characteristic functions of operator colligations.

The \textbf{adjoint} or \textbf{dual} of the system $\Sigma$ is the system $\Sigma^*$ such that its system operator is the indefinite  adjoint $T_{\Sigma}^*$  of $T_\Sigma.$ That is,  $\Sigma^*=(T_{\Sigma}^*;\cX,\cY,\cU;\kappa).$ In this paper, all the adjoints are with respect to the indefinite inner product.  For an operator valued function $\varphi,$  the notation ${\varphi}^*({z})$ is used instead of $\left(\varphi({z})\right)^*, $ and the function $\varphi^\#(z)$ is defined to be $\varphi^*(\bar{z}).$  With this notation, for the transfer function $\theta_{\Sigma^*}$ of $\Sigma^*,$ it  clearly holds  $ \theta_{\Sigma^*}(z)={\theta_{\Sigma}}^\#(z).   $
 Since  contractions between  Pontryagin spaces with the same negative index are bi-contractions (cf. eg. \cite[Corollary 2.5]{rovdrit}), $\Sigma^*$ is passive whenever $\Sigma$ is.

 In the case where all the spaces are Hilbert spaces, it is well known; see for instance \cite[Proposition 8]{Arov}, that the transfer function of the passive system is a \textbf{Schur function.} That is, contractive operator valued function  holomorphic in the unit disk $\dD$. In the case where $\cU$ and $\cY$ are Hilbert spaces and the state space $\cX$ is a Pontryagin space, Saprikin showed in \cite[Theorem 2.2]{Saprikin1} that  the transfer function of the passive system  \eqref{colli} is a \textbf{generalized Schur function.}
 It will be proved later in Proposition \ref{maxdim}  that this result holds also in the case when all the spaces are Pontryagin spaces.
  The \textbf{generalized Schur class} $\bS_{\kappa}( \cU,\cY)$, where $\cU$ and $\cY$ are Pontryagin spaces with the same negative index,  is the set of %   consists of
 $\mathcal{L}(\cU,\cY)$-valued functions $S(z)$ % which are % meromorphic in the unit disc $\dD$
  holomorphic in a neighbourhood $\Omega$ of the origin such that the Schur kernel \begin{equation}\label{kernel1}
K_S(w,z)=\frac{1-S(z)S^*(w)}{1-z\bar{w}}, \qquad w,z \in \Omega,
\end{equation} has $\kappa$ negative squares ($\kappa=0,1,2,\ldots$). This means that for any finite set of points $w_1,\ldots,w_n$
 in the domain of holomorphy $\rho(S)$ of $S$ and set of vectors $\{f_1,\ldots,f_n\} \subset \cY,$ the Hermitian matrix
\begin{equation}\label{kernelmatrix1}
\left(\left\langle K_S(w_j,w_i)f_j,f_i \right\rangle_{\cY}\right)_{i,j=1}^{n},
\end{equation} where $\left\langle\cdot, \cdot\right\rangle_{\cY}$ is the indefinite inner product of the space $\cY,$
 has no more than $\kappa$ negative eigenvalues, and there exists at least one such matrix
 that has exactly $\kappa$ negative eigenvalues. A function $S$ belongs to $\SK$ if and only if $S_\kappa^\# \in \mathbf{S}(\cY,\cU)$; see \cite[Theorem 2.5.2]{ADRS}. The class $\mathbf{S}_0(\cU,\cY)$ coinsides with the ordinary Schur class, and it is written  as $\mathbf{S}(\cU,\cY).$

 The direct connection between the transfer functions of  passive systems of the form \eqref{colli} and the generalized Schur functions allows to study the properties of  generalized Schur functions by using  passive systems, and vice versa. Therefore, a fundamental problem of the subject is, for a given $\theta \in \SK,$ find a realization $\Sigma$ of $\theta$ with the desired minimality or optimality properties (observable, controllable, simple, minimal, optimal,  $^*$-optimal); for details, see Theorems \ref{realz} and \ref{optimals} and Lemma \ref{repres}. The described problem is called a \textbf{realization problem.} In the standard Hilbert space settings, realizations problems, as well as other properties of  passive systems, were studied, for instance, by Ando \cite{Ando},   Arov \cite{A,Arov},
     Arov et al. \cite{ArKaaP,ArKaaP3,ArNu1,ArNu2},
de Branges and Rovnyak \cite{BrR1,BrR2},   Brodski\u{\i} \cite{Br1},  Helton \cite{Helton} and Sz.-Nagy and Foias \cite{SF}. The case where the state space is a Pontryagin space while incoming and outgoing spaces are still Hilbert spaces, unitary systems were studied, for instance, by  Dijksma et al. \cite{DLS1,DLS2}, and passive systems by Saprikin \cite{Saprikin1},
Saprikin and Arov  \cite{SapAr}, Saprikin et al.  \cite{Saprikinetal} and by the author  in \cite{Lassi}. The case where all the spaces are Pontryagin spaces, theory of isometric, co-isometric and conservative systems is  considered, for instance,  in \cite{ADRS,sym-col,rovdrit}.

Especially, in   \cite{Arov}, Arov proved the existence of so-called optimal minimal realizations of an ordinary Schur function; for definitions, see Section \ref{sec-opmin}. The proof was based on the existence (right) \textbf{defect functions}. For an ordinary Schur function $S(\zeta)$, the (right) defect function $\varphi$ of $S$ is, roughly speaking, the maximal analytic minorant of $I-S^*(\zeta)S(\zeta).$ More precicely, this means that for almost every (a.e.) $\zeta$ on the unit circle $\dT,$ it holds
$$ \varphi^*(\zeta) \varphi (\zeta) \leq    I-S^*(\zeta)S(\zeta),  $$ and for every other operator valued analytic function $\widehat{\varphi}$ with similar property, it holds
$$ \widehat{\varphi}^*(\zeta) \widehat{\varphi} (\zeta) \leq  \varphi^*(\zeta) \varphi (\zeta)                 .  $$ For the existence of  defect functions, see \cite[Theorem V.4.2]{SF}, and for a detailed treatise, see \cite{Boiko97, Boikop1, Boikop2}. In \cite{ArKaaP}, Arov et al. constructed   ($^*$-)optimal minimal passive systems in the Hilbert space setting without using  defect functions. The construction can be done by taking an appropriate   restriction of some system. In the indefinite setting, if one uses a suitable definition of optimality, a similar method as was used by Arov et al. still produces a  ($^*$-)optimal minimal passive system. In Pontryagin state space case, this was proved by Saprikin \cite{Saprikin1}. It will be shown in Theorem \ref{optimals} that the same result still holds in the case where all the spaces are Pontryagin spaces.

 The study of the class of generalized Schur functions $\SK$    was continued in \cite{Saprikinetal,SapAr}, in the case where $\cU$ and $\cY$ are Hilbert spaces and the state space is a Pontryagin space.
 %Saprikin and his colleagues continued and  passive systems with a Pontryagin state spaces.
 In \cite{SapAr}, Saprikin and Arov used  the right Kre\u{\i}n-Langer factorization of the form  $S=S_rB_r^{-1}$ for $S \in \SK,$
 and proved that
 the existence of the optimal minimal realization of $S$  is equivalent to the existence of the right defect function of $S_r.$ However, they did not define the defect functions for the generalized Schur functions. This was done by the author in \cite{Lassi} by using the Kre\u{\i}n-Langer factorizations. With the definition given therein, the main results of \cite{Seppo} were generalized to the Pontryagin state space setting. The main subjects of \cite{Lassi} include some continuation of the study of products of  systems and the stability properties of passive systems,  subjects  treated earlier by Saprikin et al. in \cite{Saprikinetal}.   In the present paper, it will be shown that a concept of defect functions can be defined in the case where all the spaces are Pontryagin spaces. The key idea here is to use optimal minimal passive realizations and conservative embeddings. By using such a  definition, it is shown that one can generalize and improve some  of the main results from \cite{Seppo}, using different  proofs than those given in  \cite{Seppo} or \cite{Lassi}, see Theorem \ref{sepontulos}. Furthermore, in Theorem \ref{kulmaminimal},  the main results from \cite{ArNu1,ArNu2} concerning  the criterion when all the minimal realizations of a Schur function are unitarily similar, is generalized to the present indefinite setting. The proof will be carried out entirely by using the theory of passive systems, without applying   Hardy space theory or the theory of Hankel operators as  in the proof provided in \cite{ArNu2}.

 The paper is organized as follows. In Section \ref{dil-em} basic facts of linear systems, Julia operators, dilations and embeddings are recalled. As a preparation, it is shown in Proposition \ref{consdil} that an arbitrary passive linear system has a conservative dilation. Moreover, Lemma \ref{repres} gives some usefull representations and restrictions of passive systems. That lemma will be used extensively later on in this paper.

 In Section \ref{sec-opmin}, the existence and basic properties of ($^*$-)optimal minimal realizations are established. The main result of this section is Theorem \ref{optimals}.

   The generalized defect functions are introduced in Section \ref{sec7}. In particularly,  Theorem \ref{kulmaminimal}   in this section can be seen as the main result of the paper.

  \section{Linear systems, dilations and embeddings} \label{dil-em}

Let $\Sys$ be a linear system as in \eqref{colli}.
The following subspaces \begin{align}
\cX^c&:=\cspan\{\ran A^{n}B:\,n=0,1,\ldots\} \label{cont1}  \\
\cX^o&:= \cspan\{\ran A^{*n}C^*:\,n=0,1,\ldots\}\label{obs1}  \\
\cX^{s}&:= \cspan\{\ran A^{n}B,\ran A^{*m}C^*:\,n,m=0,1,\ldots\},  \label{simp1}
\end{align} %where $\cspan$ means the closed linear span,
 are called, respectively, controllable,  observable and simple subspaces.
The system is said to be \textbf{controllable} (\textbf{observable}, \textbf{simple})
  if $\cX^c=\cX (\cX^o=\cX,\cX^s=\cX)$ and \textbf{minimal} if it is both controllable and observable.
 % Alternative characterisations are, respectively,
 % \begin{align}
%(\cX^c)^\perp&= \bigcap_{n \in \dN_0}\ker B^* A^{*n} =\{0\}\label{contperp1}   TARKISTA! \\
%(\cX^o)^\perp&= \bigcap_{n \in \dN_0}\ker C A^{n}=\{0\}\label{obsperp1}  \\
%(\cX^{s})^\perp&=\left( \bigcap_{n \in \dN_0}\ker B^* A^{*n}\right)\cap \left( \bigcap_{n \in \dN_0}\ker C A^{n}\right)=\{0\} \label{simpperp1}
%\end{align}
 When $\Omega\ni0$ is some symmetric neighbourhood of the origin, that is, $\bar{z} \in \Omega$ whenever $z\in \Omega,$ then also
 \begin{align}
\cX^c&=\cspan\{\ran (I-zA)^{-1}B, z \in \Omega\} \label{cont2}  \\
\cX^o&=\cspan\{\ran (I-zA^*)^{-1}C^*, z \in \Omega\}\label{obs2}  \\
\cX^{s}&=\cspan\{\ran (I-zA)^{-1}B,\ran (I-wA^*)^{-1}C^*, z,w \in \Omega\} \label{simp2}
\end{align}

  %\section{Dilations and minimal systems}

  The system \eqref{colli} can be expanded to a larger system  without changing the transfer function. It can be done by using the so-called \textbf{Julia operator}, see \eqref{Juliaoperator} below.
          For a proof of the following theorem  and more details about the Julia operators, see \cite{rovdrit}. The basic information about the indefinite inner product spaces and their operators can be recalled from \cite{Azizov,Bognar,rovdrit}.

          \begin{theorem}\label{Julia} Suppose that $\cX_1$  and $\cX_2$ are Pontryagin spaces with the same negative index,
           and let $A: \cX_1 \to \cX_2$ be a contraction. Then there exist Hilbert spaces $\sD_{A}$
            and $\sD_{A^*},$ linear operators $D_A: \sD_{A}\to\cX_1, D_{A^*}: \sD_{A^*} \to \cX_2 $
             with zero kernels  and a linear operator $L:\sD_{A} \to \sD_{A^*}  $ such that it holds
             \begin{equation}\label{defects}
             I-A^*A=D_{A}D_{A}^*, \qquad  I-AA^*=D_{A^*}D_{A^*}^*,
             \end{equation} and the operator
              \begin{equation}\label{Juliaoperator}
                       U_A:= \begin{pmatrix}
                               A & D_{A^*} \\
                                D^*_{A} & -L^*
                             \end{pmatrix}: \begin{pmatrix}
                                              \cX_1 \\
                                              \sD_{A^*}
                                            \end{pmatrix} \to \begin{pmatrix}
                                             \cX_2 \\
                                              \sD_{A}
                                            \end{pmatrix}
                     \end{equation} is unitary. Moreover, $D_{A}, D_{A^*}$ and $U_A$ are %essentially
                      unique up to unitary equivalence.
          \end{theorem}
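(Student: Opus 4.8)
The plan is to run the classical Julia-operator construction, the only genuinely indefinite ingredient being the positivity of the defect operators. Since $A$ is a contraction between Pontryagin spaces with the same negative index it is a bi-contraction (as recalled in the introduction), so $A^*$ is a contraction as well; hence $I-A^*A$ and $I-AA^*$ are bounded self-adjoint operators on $\cX_1$ and $\cX_2$ with $\langle(I-A^*A)x,x\rangle_{\cX_1}\ge 0$ and $\langle(I-AA^*)y,y\rangle_{\cX_2}\ge 0$ for all $x,y$. For a bounded nonnegative self-adjoint $T$ on a Pontryagin space $\cX$ there is a factorization $T=GG^*$ with $G\colon\sH\to\cX$ injective and $\sH$ a Hilbert space: equip $\ran T$ with the inner product $\langle Tx,Ty\rangle_{\sH}:=\langle Tx,y\rangle_{\cX}$, which is positive definite because $T\ge 0$ (definiteness via the Cauchy--Schwarz inequality for the nonnegative form $(u,v)\mapsto\langle Tu,v\rangle_{\cX}$), complete it to $\sH$, and let $G$ be the continuous extension of the inclusion $\ran T\hookrightarrow\cX$; then $GG^*=T$, and $\ker G=\{0\}$ because $\ran G^*=\ran T$ is dense in $\sH$ (this is a factorization of Bogn\'ar--Kr\'amli type; see e.g. \cite{Bognar,rovdrit}). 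Applying this to $T=I-A^*A$ produces $\sD_{A}$ and $D_A$, and to $T=I-AA^*$ produces $\sD_{A^*}$ and $D_{A^*}$; this gives \eqref{defects} together with the stated kernel property.

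Next I would build $L$. From the intertwining identity $A(I-A^*A)=(I-AA^*)A$, i.e. $AD_AD_A^*=D_{A^*}D_{A^*}^*A$, define $L$ on the dense subspace $\ran D_A^*\subseteq\sD_A$ by $L(D_A^*x):=D_{A^*}^*(Ax)$. It is well defined, since $D_A^*x=0$ forces $(I-A^*A)x=0$, hence $(I-AA^*)Ax=0$, hence $\|D_{A^*}^*Ax\|_{\sD_{A^*}}^2=\langle(I-AA^*)Ax,Ax\rangle_{\cX_2}=0$. It is bounded: with $Q:=I-A^*A=D_AD_A^*$,
\[
\|D_{A^*}^*Ax\|_{\sD_{A^*}}^2=\langle A^*(I-AA^*)Ax,x\rangle_{\cX_1}=\langle(Q-Q^2)x,x\rangle_{\cX_1}=\langle Qx,x\rangle_{\cX_1}-\langle Qx,Qx\rangle_{\cX_1},
\]
where $\langle Qx,x\rangle_{\cX_1}=\|D_A^*x\|_{\sD_A}^2$ and $|\langle Qx,Qx\rangle_{\cX_1}|=|\langle D_AD_A^*x,D_AD_A^*x\rangle_{\cX_1}|\le\|D_A\|^2\|D_A^*x\|_{\sD_A}^2$ (this last estimate taken with respect to a fundamental decomposition of $\cX_1$); hence $\|L(D_A^*x)\|_{\sD_{A^*}}\le(1+\|D_A\|^2)^{1/2}\|D_A^*x\|_{\sD_A}$ and $L$ extends to a bounded operator $\sD_A\to\sD_{A^*}$. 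By construction $LD_A^*=D_{A^*}^*A$, and taking adjoints $D_AL^*=A^*D_{A^*}$.

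It then remains to check that $U_A$ in \eqref{Juliaoperator} is unitary by expanding the four blocks of $U_A^*U_A$ and of $U_AU_A^*$. The blocks acting on the state spaces collapse via \eqref{defects}: $A^*A+D_AD_A^*=I_{\cX_1}$ and $AA^*+D_{A^*}D_{A^*}^*=I_{\cX_2}$. The cross blocks $A^*D_{A^*}-D_AL^*$ and $D_{A^*}^*A-LD_A^*$ vanish by the defining relations of $L$; and $AD_A-D_{A^*}L=0$ because $D_{A^*}LD_A^*=D_{A^*}D_{A^*}^*A=(I-AA^*)A=A(I-A^*A)=AD_AD_A^*$ while $\ran D_A^*$ is dense in $\sD_A$, the remaining cross block being its adjoint. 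Finally, on $\ran D_A^*$ the computation of the previous paragraph rewrites as $\|LD_A^*x\|_{\sD_{A^*}}^2=\|D_A^*x\|_{\sD_A}^2-\langle D_A^*D_AD_A^*x,D_A^*x\rangle_{\sD_A}$, that is $L^*L=I_{\sD_A}-D_A^*D_A$ on a dense subspace and hence everywhere; symmetrically $LL^*=I_{\sD_{A^*}}-D_{A^*}^*D_{A^*}$. Together these identities give $U_A^*U_A=I$ and $U_AU_A^*=I$. For uniqueness: the factorization $T=GG^*$ with $G$ injective from a Hilbert space is unique up to unitary equivalence, since $\|G^*x\|^2=\langle Tx,x\rangle_{\cX}$ and so $G_1^*x\mapsto G_2^*x$ is a well-defined isometry on a dense subspace whose closure is a unitary intertwining $G_1$ and $G_2$; applied to $I-A^*A$ and $I-AA^*$ this fixes $\sD_A,D_A$ and $\sD_{A^*},D_{A^*}$ up to unitary equivalence, and $L$ is then forced by $D_{A^*}L=AD_A$ and the injectivity of $D_{A^*}$, whence the same for $U_A$.

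The step I expect to be the main obstacle is controlling the interplay of the two geometries: the positivity of $I-A^*A$ and $I-AA^*$ is an essentially indefinite fact that fails for general bounded operators and uses that both $A$ and $A^*$ are contractions (hence the equal negative index hypothesis), and since here $D_A$ need not be a contraction and $L$ need not have norm at most $1$, the boundedness of $L$ has to be extracted from the explicit estimate above rather than read off from the Hilbert-space picture; the remainder is bookkeeping with block operators.
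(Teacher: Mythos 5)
The paper does not prove this theorem at all: it is quoted verbatim from the literature, with the reader referred to Dritschel--Rovnyak \cite{rovdrit} for the proof. Your argument is a correct, self-contained reconstruction of that standard proof, and it handles the two genuinely indefinite points properly: the nonnegativity of $I-A^*A$ and $I-AA^*$ rests on the bi-contraction property (equal negative indices), and the Bogn\'ar--Kr\'amli factorization $T=GG^*$ through a Hilbert space with $G$ injective is exactly how the defect spaces are obtained in \cite{rovdrit}. Your explicit bound $\|L(D_A^*x)\|^2=\|D_A^*x\|^2-\langle Qx,Qx\rangle_{\cX_1}$ with $|\langle Qx,Qx\rangle_{\cX_1}|\le\|D_A\|^2\|D_A^*x\|^2$ correctly replaces the Hilbert-space shortcut $\|L\|\le 1$, and the identities $L^*L=I-D_A^*D_A$, $LL^*=I-D_{A^*}^*D_{A^*}$ obtained from the same computation close the unitarity check. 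Two small points you pass over quickly but which do work out: the continuity of the inclusion $\ran T\hookrightarrow\cX$ (needed to define $G$) follows from the same Cauchy--Schwarz inequality you invoke for definiteness, namely $|\langle Tx,y\rangle_{\cX}|^2\le\langle Tx,x\rangle_{\cX}\langle Ty,y\rangle_{\cX}$; and the passage from equality of the quadratic forms of $L^*L$ and $I-D_A^*D_A$ on the dense set $\ran D_A^*$ to equality of the operators uses polarization in the Hilbert space $\sD_A$, which is legitimate. The uniqueness argument (isometry $G_1^*x\mapsto G_2^*x$, then $L$ forced by $D_{A^*}L=AD_A$ and injectivity of $D_{A^*}$) is also the standard one. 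I see no gap.
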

\begin{remark}\label{def-def}
  %Since $U_A$ in Theorem \ref{Julia} is unitary, it holds $I-A^*A=D_{A}D_{A}^*$%and $I-AA^*=D_{A^*}D_{A^*}^*$
   %It follows  from Theorem \ref{Julia}  that if $\cX_1$  and $\cX_2$ are Pontryagin spaces with the same negative index,
           %and $A: \cX_1 \to \cX_2$ is a contraction,  there exist a Hilbert space $\sD_{A}$
           %and a linear operator $D_A: \sD_{A}\to\cX_1 $
           %  with zero kernel such that $I-A^*A=D_{A}D_{A}^*.$
       The operator $D_{A}$ from Theorem \ref{Julia} is called a \textbf{defect operator}  of $A.$
\end{remark}

  A \textbf{ dilation} of a system $\sys$ is
           any system of the form $\widehat{\Sigma}=(\widehat{A},\widehat{B},\widehat{C},D;\widehat{\cX},\cU,\cY;\kappa),$ where
  \begin{equation}\label{dilation}
 \widehat{\cX}=  \cD\oplus{\cX}\oplus\cD_*  , \quad \widehat{A} \cD \subset \cD, \quad \widehat{A}^* \cD_* \subset \cD_*, \quad \widehat{C} \cD =\{0\}, \quad \widehat{B}^* \cD_* =\{0\}.
 \end{equation} The spaces $\cD$ and $\cD_*$ are required to be Hilbert spaces.  The system operator  $T_{\widehat{\Sigma}}$ of $\widehat{\Sigma}$ is of the form
  \begin{equation}\label{dilatio-blok}\begin{split}
    T_{\widehat{\Sigma}}=\begin{pmatrix}
                           \begin{pmatrix}
                             A_{11} & A_{12} & A_{13} \\
                             0 & A & A_{23} \\
                             0 & 0 & A_{33}
                           \end{pmatrix} & \begin{pmatrix}
                                             B_1\\
                                            B \\
                                            0
                                          \end{pmatrix} \\
                           \begin{pmatrix}
                                             0&
                                            C &
                                            C_1
                                          \end{pmatrix} & D
                         \end{pmatrix}: \begin{pmatrix}
                                          \begin{pmatrix}
                                            \cD \\
                                            \cX \\
                                            \cD_*
                                          \end{pmatrix} \\
                                          \cU
                                        \end{pmatrix} \to\begin{pmatrix}
                                          \begin{pmatrix}
                                            \cD \\
                                            \cX \\
                                            \cD_*
                                          \end{pmatrix} \\
                                          \cY
                                        \end{pmatrix}, \\
                                        \widehat{A}=\begin{pmatrix}
                             A_{11} & A_{12} & A_{13} \\
                             0 & A & A_{23} \\
                             0 & 0 & A_{33}
                           \end{pmatrix} ,\qquad\widehat{B}=\begin{pmatrix}
                                             B_1\\
                                            B \\
                                            0
                                          \end{pmatrix},\qquad\widehat{C}= \begin{pmatrix}
                                             0&
                                            C &
                                            C_1
                                          \end{pmatrix}. \end{split}
  \end{equation}

  The system $\Sigma$ is called a \textbf{restriction} of $\widehat{\Sigma}.$
   Recall that  subspace $\cN$ of the Pontryagin space $\cH$ is  \textbf{regular} if it is itself a Pontryagin space with the inherited inner product of $\langle \cdot ,\cdot\rangle_{\cH}.$ The subspace $\cN$ is regular precicely when $\cN^\perp$ is regular, where  $\perp$ refers to orthogonality with respect to the indefinite inner product of $\cH$.
   Since $\cX$ clearly is  a {regular} subspace of $\widehat{\cX},$ there exists the unique orthogonal projection $P_{\cX}$ from $\widehat{\cX}$ to $\cX.$ Let $\widehat{A}\uphar_{\cX}$ be the restriction of $\widehat{A}$ to the subspace $\cX.$ Then, the system $\Sigma$  can be represented as \begin{equation}\label{restri} \Sigma=(P_{\cX}\widehat{A}\uphar_{\cX},P_{\cX}\widehat{B},\widehat{C}\uphar_{\cX},D;P_{\cX}\widehat{\cX},\cU,\cY;\kappa).\end{equation}
  Dilations and restrictions are denoted by
  \begin{equation}\label{dil-res}
 \widehat{\Sigma}=\dil_{\cX \to \widehat{\cX}} \Sigma, \quad \Sigma=\res_{ \widehat{\cX} \to\cX} \widehat{\Sigma},
 \end{equation} mostly without subscripts when the corresponding state spaces are clear.
  A calculation show that the transfer functions of the original system and its dilation coincide. Moreover, if $\Sigma$ is passive, then is any retriction of it.  The following proposition shows that a passive system  has a conservative dilation.
   %such that the negative indices of the state spaces coincide.
   For the Hilbert space case, this result is from \cite{A}, and for the Pontryagin state space case, see \cite{Saprikin1}.

  \begin{proposition}\label{consdil}
    Let $\sys$ be a passive system. Then there exists a conservative dilation $\widehat{\Sigma}=(\widehat{A},\widehat{B},\widehat{C},D;\widehat{\cX},\cU,\cY;\kappa)$  of $\Sigma$.
  \end{proposition}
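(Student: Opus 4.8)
The plan is to build the dilation explicitly out of the Julia operator of the system operator $T_\Sigma$, adapting the constructions of \cite{A} (Hilbert case) and \cite{Saprikin1} (Pontryagin state-space case), the only new ingredient being that the classical defect-operator construction is replaced by Theorem~\ref{Julia}. Since $\Sigma$ is passive, $T_\Sigma\in\cL(\cX\oplus\cU,\cX\oplus\cY)$ is a contraction between two Pontryagin spaces of the same negative index — here the standing assumption that $\cU$ and $\cY$ have the same negative index is used. Applying Theorem~\ref{Julia} to $T_\Sigma$ produces Hilbert defect spaces $\sD:=\sD_{T_\Sigma}$ and $\sD_*:=\sD_{T_\Sigma^*}$, defect operators with zero kernels, an operator $L\colon\sD\to\sD_*$, and a unitary Julia operator $U_{T_\Sigma}\colon(\cX\oplus\cU)\oplus\sD_*\to(\cX\oplus\cY)\oplus\sD$ of the form \eqref{Juliaoperator}. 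Denote the off-diagonal blocks of $U_{T_\Sigma}$ by $D_{T_\Sigma^*}$, with components $F\colon\sD_*\to\cX$ and $G\colon\sD_*\to\cY$, and $D_{T_\Sigma}^*$, with components $F_*\colon\cX\to\sD$ and $G_*\colon\cU\to\sD$; the only property of $U_{T_\Sigma}$ I would actually use afterwards is that it is unitary.

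Next I would adjoin two Hilbert shift spaces to the state space. Put $\cD:=\ell^2(\dN_0,\sD)$ and $\cD_*:=\ell^2(\dN_0,\sD_*)$, which are Hilbert spaces since $\sD$ and $\sD_*$ are, and let $\widehat\cX:=\cD\oplus\cX\oplus\cD_*$; this is a Pontryagin space whose negative index is again $\kappa$, because only Hilbert spaces were adjoined to $\cX$. I would then define $\widehat\Sigma=(\widehat A,\widehat B,\widehat C,D;\widehat\cX,\cU,\cY;\kappa)$ by the following rule: given a state $(e,x,d)$ with $e=(e_0,e_1,\dots)\in\cD$ and $d=(d_0,d_1,\dots)\in\cD_*$ and an input $u\in\cU$, apply $U_{T_\Sigma}$ to $(x,u;d_0)$, obtaining a new $\cX$-entry, the output in $\cY$, and a vector $\delta\in\sD$; take the new $\cD_*$-entry to be $(d_1,d_2,\dots)$ (one step of the backward shift) and the new $\cD$-entry to be $(\delta,e_0,e_1,\dots)$ (feed $\delta$ into the leading slot of the forward shift). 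Explicitly $\widehat B u=\bigl((G_*u,0,0,\dots),\,Bu,\,(0,0,\dots)\bigr)$, $\widehat C(e,x,d)=Cx+Gd_0$, and $\widehat A(e,x,d)=\bigl((F_*x-L^*d_0,e_0,e_1,\dots),\,Ax+Fd_0,\,(d_1,d_2,\dots)\bigr)$, where $L$ is as in Theorem~\ref{Julia}.

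What remains is verification. From the defining formulas one reads off at once the structural conditions in \eqref{dilation}: $\widehat A\cD\subset\cD$ and $\widehat C\cD=\{0\}$ are immediate, while $\cD\oplus\cX$ is $\widehat A$-invariant and $\ran\widehat B\subset\cD\oplus\cX$, which — passing to the indefinite adjoint and using that $\cD_*$ is regular — give $\widehat A^*\cD_*\subset\cD_*$ and $\widehat B^*\cD_*=\{0\}$. Hence $\widehat\Sigma$ is a dilation of $\Sigma$ in the sense of \eqref{dilation} and $\Sigma=\res_{\widehat\cX\to\cX}\widehat\Sigma$, so the two transfer functions coincide, as was observed before the proposition. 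To see that $T_{\widehat\Sigma}$ is unitary, rather than computing I would factor it as $T_{\widehat\Sigma}=W_2\,V\,W_1$, where $V$ acts as $U_{T_\Sigma}$ on the $(\cX\oplus\cU)\oplus\sD_*$ block and as the identity on the remaining $\cD\oplus\cD_*$ summands, $W_1$ is the unitary rearrangement of $\widehat\cX\oplus\cU$ that splits off the leading copy via the canonical identification $\ell^2(\dN_0,\sD_*)\cong\sD_*\oplus\ell^2(\dN_0,\sD_*)$ and permutes summands, and $W_2$ is the unitary that merges $\delta$ into $\cD$ via $\sD\oplus\ell^2(\dN_0,\sD)\cong\ell^2(\dN_0,\sD)$ and permutes summands; since $W_1,W_2,V$ are all unitary, so is $T_{\widehat\Sigma}$, i.e. $\widehat\Sigma$ is conservative. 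Boundedness of $\widehat A,\widehat B,\widehat C$ and the value $\kappa$ of the negative index of $\widehat\cX$ have already been noted.

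I expect the main obstacle to be purely organizational: arranging the index bookkeeping on $\ell^2(\dN_0,\sD)$ and $\ell^2(\dN_0,\sD_*)$ so that $T_{\widehat\Sigma}$ genuinely splits as the displayed composition of the unitary $U_{T_\Sigma}$ with unitary rearrangements, and checking the inclusions in \eqref{dilation} with respect to the \emph{indefinite} adjoint throughout, not a Hilbert-space one. The conceptual work is carried entirely by Theorem~\ref{Julia} — the existence of a Julia operator with Hilbert defect spaces for a contraction between Pontryagin spaces of equal negative index — which is precisely what allows the Hilbert-space argument of \cite{A} and the Pontryagin state-space argument of \cite{Saprikin1} to go through verbatim in the present, fully indefinite, setting.
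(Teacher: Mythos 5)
Your construction is the same as the paper's: both pass to the Julia operator of $T_\Sigma$ (Theorem~\ref{Julia}), adjoin the two Hilbert shift spaces built from the defect spaces $\sD_{T_\Sigma}$ and $\sD_{T_\Sigma^*}$ to the state space, and define $\widehat A,\widehat B,\widehat C$ by exactly the same shift-plus-Julia dynamics (your $F,G,F_*,G_*$ are the paper's $P_\cX D_{T^*},P_\cY D_{T^*},D_T^*\uphar_\cX,D_T^*\uphar_\cU$, and your one-sided indexing is just a relabelling of the paper's $\bigoplus_{-\infty}^{-1}$ and $\bigoplus_1^{\infty}$). The only cosmetic difference is that you verify unitarity of $T_{\widehat\Sigma}$ by factoring it as a composition of unitary rearrangements with $U_{T_\Sigma}\oplus I$, whereas the paper computes directly with the energy identity \eqref{unidil} and the solvability relation \eqref{unirep}; both are correct.
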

  \begin{proof}
    Since the system operator $T$ of $\Sigma$ is a contraction, by Theorem \ref{Julia}, there exists a Julia operator
   \begin{equation}\begin{split} \label{JuliaoperatorT}
                   \begin{pmatrix}
                               T & D_{T^*} \\
                                D^*_{T} & -L^*
                             \end{pmatrix}&: \begin{pmatrix}
                                              \cX \oplus \cU \\
                                              \sD_{T^*}
                                            \end{pmatrix} \to \begin{pmatrix}
                                             \cX \oplus \cY\\
                                              \sD_{T}
                                            \end{pmatrix}  \qekv \\     \begin{pmatrix}
                               \begin{pmatrix}
                                 A & B \\
                                 C & D
                               \end{pmatrix} &\begin{pmatrix}
                                                P_{\cX} D_{T^*} \\
                                                 P_{\cY} D_{T^*}
                                              \end{pmatrix} \\
                               \begin{pmatrix}
                                  D^*_{T}\uphar_{\cX} &  D^*_{T}\uphar_{\cU}
                               \end{pmatrix} & -L^*
                             \end{pmatrix}&: \begin{pmatrix}\begin{pmatrix}
                                                             \cX \\
                                                            \cU
                                                           \end{pmatrix}
                                            \\
                                              \sD_{T^*}
                                            \end{pmatrix} \to \begin{pmatrix}
                                            \begin{pmatrix}
                                                             \cX \\
                                                            \cY
                                                           \end{pmatrix}\\
                                              \sD_{T}
                                            \end{pmatrix}
                  \end{split}   \end{equation} with properties introduced in Theorem \ref{Julia}. Denote $E_{\cH}(h)=\langle h,h \rangle_{\cH}$ for a vector $h$ in an inner product space $\cH.$ Then, for $x \in \cX, u \in \cU$ and $f \in \sD_{T^*},$ one has
                     \begin{equation}\label{unidil}\begin{split}
                       E_{\cX}(x)+E_{\cU}(u)+E_{\sD_{T^*}}(f)&= E_{\cX}(Ax + Bu+P_{\cX}D_{T^*} f)\\&\quad+E_{\cY}(Cx + Du+P_{\cY}D_{T^*} f)\\&\qquad+E_{  \sD_{T}}(D^*_{T} \uphar_{\cX} x + D^*_{T} \uphar_{\cU}u -L^* f).
                     \end{split}\end{equation} Also, for given $x' \in \cX, y \in \cY$ and $f'\in \sD_{T}, $ there exists an unique triplet  $x \in \cX, u \in \cU$ and $f \in \sD_{T^*}$ such that
                                                                 \begin{equation}\label{unirep}
                               \begin{pmatrix}
                                 A & B &  P_{\cX} D_{T^*}\\
                                 C & D &   P_{\cY} D_{T^*}\\
                                  D^*_{T}\uphar_{\cX} &  D^*_{T}\uphar_{\cU}
                                & -L^*
                               \end{pmatrix} \begin{pmatrix}
                                                             x \\
                                                            u

                                            \\
                                              f
                                            \end{pmatrix} =\begin{pmatrix}

                                                             x'\\
                                                            y
                                                      \\
                                              f'
                                            \end{pmatrix}
                                                                 \end{equation}

                   Define $$ \ell_-^2( \sD_{T}):= \bigoplus_{-\infty}^{-1} \sD_{T}, \quad\ell_+^2( \sD_{T^*}):= \bigoplus_{1}^{\infty} \sD_{T^*}, \quad \widehat{\cX}:=  \ell_-^2( \sD_{T}) \oplus \cX \oplus \ell_+^2( \sD_{T^*}),$$ where $\bigoplus$ denotes an orthogonal sum of Hilbert spaces. Since $\ell_-^2( \sD_{T})$ and  $\ell_+^2( \sD_{T^*})$ are Hilbert spaces, the space $\widehat{\cX}$ clearly is a Pontryagin space with the negative index $\kappa$. For $u \in \cU$ and $(\ldots f_{-1},\underline{x},f_1 \ldots )\in \widehat{\cX},$ where the underlined element belongs to $\cX,$ define the operators
                     \begin{align*}
                       &\widehat{A}(\ldots, f_{-2}, f_{-1},\underline{x},f_1,f_{2},f_3, \ldots ):=\\& \qquad(\ldots, f_{-1},D^*_{T} \uphar_ \cX x -L^*f_1, \underline{Ax + P_{\cX}D_{T^*} f_{1}},f_{2}, f_3,\ldots ),
                        \\  &\widehat{B}u:=(\ldots,  0,  D^*_{T} \uphar_ \cU u, \underline{Bu}, 0,  \ldots) ,\\
                       &\widehat{C}(\ldots,  f_{-1},\underline{x},f_1, \ldots):=Cx + P_{\cY}D_{T^*} f_{1}.
                     \end{align*} A calculation shows that the system $\widehat{\Sigma}:=(\widehat{A},\widehat{B},\widehat{C},D;\widehat{\cX},\cU,\cY;\kappa)$ is a dilation of $\Sigma$.
                    %  Since $\ell_-^2( \sD_{T})$ and  $\ell_+^2( \sD_{T^*})$ are Hilbert spaces, $\widehat{\cX}$ is a Pontryagin space with the negative index  $\kappa'=\kappa.$
                       Moreover, calculations by using the identities \eqref{unidil} and \eqref{unirep} show that the system operator $T_{\widehat{\Sigma}}$ is unitary, and therefore the system $\widehat{\Sigma}$ is conservative.
  \end{proof}
%\begin{remark}\label{dil-rem}
It is possible that $\cD=\{0\}$ or $\cD_*=\{0\}$ in \eqref{dilation}. In those cases, the zero space and the corresponding row and column will be left out in \eqref{dilatio-blok}. In particular,
 if the system $\Sigma$ with the system operator $T$ as in \eqref{colli} is isometric (co-isometric), then $D_T=0$ ($D_{T^*}=0$), and proceeding as in the proof of Proposition \ref{consdil}, it is possible to construct a conservative dilation $\widehat{\Sigma}$ of the form \eqref{dilatio-blok} such that $\cD=\{0\}$ ($\cD_*=\{0\}$). Moreover, for an arbitrary passive system as in \eqref{colli}, it is possible to construct an isometric (co-isometric) dilation $\widehat{\Sigma}$ of the form \eqref{dilatio-blok} such that $\cD_*=\{0\}$ ($\cD=\{0\}$).
%\end{remark}

  There is also an another  way to expand the  system \eqref{colli}, and it is called an  \textbf{embedding}. In this expansion, the state space and the main operator will not change. The embedding of  the  system \eqref{colli}   is
  any system determined by the system operator %$\Sigma=(A, \widetilde{B}, \widetilde{C}, \widetilde{D}, \cX, \widetilde{\cU}, \widetilde{\cY},\kappa)$
   %such that $\widetilde{\cU}=  \cU \oplus\cU'$ and $\widetilde{\cY}=\cY \oplus \cY', $
    %where $\cU'$ and $\cY'$ are Hilbert spaces, and system operator $T_{\widetilde{\Sigma}}= \begin{pmatrix}
     %                        A & \widetilde{B} \\
      %                      \widetilde{C} & \widetilde{D}
       %                    \end{pmatrix}: \begin{pmatrix}
        %                                    \cX \\
         %                                   \widetilde{\cU}
          %                                \end{pmatrix}\to \begin{pmatrix}
           %                                 \cX \\
            %                                \widetilde{\cY}
             %                             \end{pmatrix}$ has the form
 \begin{equation}\label{embed}\begin{split}
   T_{\widetilde{\Sigma}}=\begin{pmatrix}
                             A & \widetilde{B} \\
                            \widetilde{C} & \widetilde{D}
                           \end{pmatrix}&: \begin{pmatrix}
                              \cX     \\  \widetilde{\cU}
                                 \end{pmatrix} \to \begin{pmatrix}
                                 \cX \\  \widetilde{\cY}
                                 \end{pmatrix} \qekv \\   \begin{pmatrix}
                             A & \begin{pmatrix}
                                   B & B_1
                                 \end{pmatrix} \\
                           \begin{pmatrix}
                                   C \\ C_1
                                 \end{pmatrix} & \begin{pmatrix}
                                                   D & D_{12} \\
                                                   D_{21} & D_{22}
                                                 \end{pmatrix}
                           \end{pmatrix}&: \begin{pmatrix}
                                            \cX \\
                                            \begin{pmatrix}
                                              \cU \\
                                              \cU'
                                            \end{pmatrix}
                                          \end{pmatrix}\to \begin{pmatrix}
                                            \cX \\
                                             \begin{pmatrix}
                                              \cY \\
                                              \cY'
                                            \end{pmatrix}
                                          \end{pmatrix},
  \end{split}\end{equation}  where $\cU'$ and $\cY'$ are Hilbert spaces.
 The transfer function of the embedded system is \begin{equation}\label{transemb}\begin{split}
                                                             \theta_{\widetilde{\Sigma}}(z)&= \begin{pmatrix}
                                                   D & D_{12} \\
                                                   D_{21} & D_{22}
                                                 \end{pmatrix} +z  \begin{pmatrix}
                                   C \\ C_1
                                 \end{pmatrix}(I_{\cX}-zA)^{-1}\begin{pmatrix}
                                   B & B_1
                                 \end{pmatrix} \\ &= \begin{pmatrix}
                                                       D + zC(I_{\cX}-zA)^{-1}B & D_{12}+zC(I_{\cX}-zA)^{-1}B_1 \\
                                                          D_{21} +zC_1(I_{\cX}-zA)^{-1}B & D_{22} + zC_1(I_{\cX}-zA)^{-1}B_1
                                                     \end{pmatrix} \\&=
                                                             \begin{pmatrix}
                                                                                           \theta_{\Sigma} (z) & \theta_{12}(z)\\
                                                                                            \theta_{21}(z) &  \theta_{22}(z)
                                                                                         \end{pmatrix},
                                                           \end{split}\end{equation}
                                                            where $\theta_{\Sigma}$ is the transfer function of the original system. The embedded systems will be needed in Section  \ref{sec7}.

It will be proved in Proposition \ref{maxdim} below that the transfer function of  any passive system  \eqref{colli} is a generalized  Schur function with index not larger than the negative index of the state space. For a special case where incoming and outcoming spaces are Hilbert spaces, this result is due to \cite[Theorem 2.2]{Saprikin1}.  The proof of the general case follows the lines of Saprikin's proof of the special case.
\begin{lemma}\label{ker-es}
  Let $\Sigma=(A,B,C,D;\cX,\cU,\cY;\kappa)$ be a passive system with the transfer function $\theta$. Denote the system operator of $\Sigma$ as $T.$   If   \begin{align}\label{Deet1} D_{T} = \begin{pmatrix}
                                      D_{T_{,1}}\\
                                      D_{T_{,2}}
                                    \end{pmatrix}:\cD_{T} \to \begin{pmatrix}
                                                                 \cX \\
                                                                 \cU
                                                               \end{pmatrix}\quad
  D_{T^*} = \begin{pmatrix}
                                      D_{T_{,1}^*}\\
                                      D_{T_{,2}^*}
                                    \end{pmatrix}: \cD_{T^*} \to \begin{pmatrix}
                                                                 \cX \\
                                                                 \cY
                                                               \end{pmatrix},
   %,\quad   D_{T^*}D_{T^*}^*=I_{\cX}-TT^*,\quad   D_{T}D_{T}^*=I_{\cX}-T^*T,
                                      \end{align} are defect operators of $T$ and $T^*$, respectively, then the identities \begin{align}\label{lefdef2}
 I_{\cY}- \theta(z) \theta^*(w)&=(1 -z\bar{w}) G(z)G^*(w)  +\psi(z) \psi^*(w) ,\\
     I_{\cU}-\theta^* (w) \theta(z)&=(1 -z\bar{w}) F^*(w)F(z)  +\varphi^*(w) \varphi(z),\label{lefdef3}
\end{align}
  with \begin{align}G(z)&=C(I_{\cX}-zA)^{-1},&   \quad\psi(z)&= D_{T_{,2}^*}+zC(I_{\cX}-zA)^{-1} D_{T^*_1}& \quad ,\label{gees}\\
   F(z)&=(I_{\cX}-zA)^{-1}B,&   \quad \varphi(z)&= D_{T_{,2}}^* +zD_{T_{,1}}^*(I_{\cX}-zA)^{-1}B,&\label{efs}\end{align} hold for every $z$ and $w$ in a sufficiently small symmetric neighbourhood  of the origin. % With similar calculations, one can %   %   show that
 % \begin{equation}\label{rightdef2}
 % I-\theta^* (w) \theta(z)=(1 -z\bar{w}) F^*(w)F(z)  +\varphi^*(w) \varphi(z),
  % \end{equation}
   % where $F(z)=(I_{\cX}-zA)^{-1}B.$
\end{lemma}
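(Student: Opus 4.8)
The plan is to derive both identities from the two Julia relations $I-TT^*=D_{T^*}D_{T^*}^*$ and $I-T^*T=D_TD_T^*$ furnished by Theorem~\ref{Julia}, by sandwiching them between the (augmented) observability and controllability maps of the system — the standard reproducing-kernel manipulation for contractive colligations. Everything will be done for $z,w$ in a symmetric neighbourhood of $0$ on which $I_\cX-zA$ and $I_\cX-wA$ are invertible; since all functions that appear are holomorphic there, the identities, once checked, hold throughout.

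The single computational input I would isolate first is the resolvent identity $zA(I_\cX-zA)^{-1}+I_\cX=(I_\cX-zA)^{-1}$, from which
\[
 T\begin{pmatrix} zF(z)\\ I_\cU\end{pmatrix}=\begin{pmatrix} F(z)\\ \theta(z)\end{pmatrix},
 \qquad
 T^*\begin{pmatrix} \bar w\,G^*(w)\\ I_\cY\end{pmatrix}=\begin{pmatrix} G^*(w)\\ \theta^*(w)\end{pmatrix},
\]
with $F(z)=(I_\cX-zA)^{-1}B$, $G(z)=C(I_\cX-zA)^{-1}$ and $G^*(w)=(I_\cX-\bar wA^*)^{-1}C^*$ (the second formula is just the first applied to $T^*$ in place of $T$). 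In the same spirit, the block forms \eqref{Deet1} of the defect operators give at once
\[
 D_T^*\begin{pmatrix} zF(z)\\ I_\cU\end{pmatrix}=\varphi(z),
 \qquad
 D_{T^*}^*\begin{pmatrix} \bar w\,G^*(w)\\ I_\cY\end{pmatrix}=\psi^*(w),
\]
with $\varphi,\psi$ exactly as in \eqref{gees}--\eqref{efs}.

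For \eqref{lefdef2} I would set $\xi(w)=\bigl(\begin{smallmatrix}\bar w\,G^*(w)\\ I_\cY\end{smallmatrix}\bigr)\colon\cY\to\cX\oplus\cY$ and evaluate $\xi(z)^*(I-TT^*)\,\xi(w)$ two ways. Using $\xi(z)^*\,\xi(w)=z\bar w\,G(z)G^*(w)+I_\cY$ and $\xi(z)^*TT^*\xi(w)=(T^*\xi(z))^*(T^*\xi(w))=G(z)G^*(w)+\theta(z)\theta^*(w)$, the left side equals $\bigl(I_\cY-\theta(z)\theta^*(w)\bigr)-(1-z\bar w)G(z)G^*(w)$; on the other hand $\xi(z)^*D_{T^*}D_{T^*}^*\xi(w)=(D_{T^*}^*\xi(z))^*(D_{T^*}^*\xi(w))=\psi(z)\psi^*(w)$, and comparing the two gives \eqref{lefdef2}. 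For \eqref{lefdef3} I would run the identical argument with $\eta(z)=\bigl(\begin{smallmatrix}zF(z)\\ I_\cU\end{smallmatrix}\bigr)\colon\cU\to\cX\oplus\cU$ and the relation $I-T^*T=D_TD_T^*$, evaluating $\eta(w)^*(I-T^*T)\,\eta(z)$; this produces $\bigl(I_\cU-\theta^*(w)\theta(z)\bigr)-(1-z\bar w)F^*(w)F(z)$ on one side and $\varphi^*(w)\varphi(z)$ on the other.

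I do not expect a genuine obstacle: the content is simply that, once augmented by the identity in the output (resp. input) coordinate, the observability map $z\mapsto\bigl(\begin{smallmatrix}\bar zG^*(z)\\ I_\cY\end{smallmatrix}\bigr)$ and the controllability map $z\mapsto\bigl(\begin{smallmatrix}zF(z)\\ I_\cU\end{smallmatrix}\bigr)$ behave like generalized eigenvectors of $T^*$ and $T$, which converts the Julia relations into the asserted kernel decompositions. The only place demanding attention is the verification of the two box identities involving $T$ and $T^*$ (where the resolvent identity enters) and the identification of $D_T^*$, $D_{T^*}^*$ applied to these vectors with $\varphi$ and $\psi^*$. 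As an alternative to the direct computation of \eqref{lefdef3}, one could instead apply \eqref{lefdef2} to the passive adjoint system $\Sigma^*$, using $\theta_{\Sigma^*}=\theta^\#$ and that the defect operators of $\Sigma^*$ are $D_{T^*}$ and $D_T$ with the two block rows interchanged; but the symmetric direct argument above is cleaner.
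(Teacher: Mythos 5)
Your proof is correct and follows essentially the same route as the paper: the paper likewise sandwiches the Julia relations $TT^*=I-D_{T^*}D_{T^*}^*$ (resp. $T^*T=I-D_TD_T^*$) between the row map $\bigl(zC(I_\cX-zA)^{-1}\ \ I_\cY\bigr)$ and the column map $\bigl(\bar w(I_\cX-\bar wA^*)^{-1}C^*\ \ I_\cY\bigr)^{\!*}$, i.e. exactly your $\xi(z)^*$ and $\xi(w)$, citing \cite[Theorem 1.2.4]{ADRS} for the initial identity and then expanding the defect block products by hand. Your version is organizationally a bit cleaner (you derive the starting identity from the intertwining relations and factor the defect term as $(D_{T^*}^*\xi)^*(D_{T^*}^*\xi)$ rather than multiplying out the $2\times2$ blocks), but the substance is identical.
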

\begin{proof}
   By applying the results from \cite[Theorem 1.2.4]{ADRS} and the identities in \eqref{defects}, one deduces that for every $z$ and $w$ in a sufficiently small symmetric neighbourhood of the origin, it holds
    \begin{align*}
     &I_{\cY}-  \theta(z) \theta^*(w)= \begin{pmatrix}
                    C(I_{\cX}-zA)^{-1} & I_{\cY}
                  \end{pmatrix} \begin{pmatrix}
                   (I_{\cX}-\bar{w}A^*)^{-1}C^*  \\ I_{\cY}
                  \end{pmatrix} \\
   &\quad- \begin{pmatrix}
         zC(I_{\cX}-zA)^{-1}& I_{\cY}\end{pmatrix} %\left(
         TT^* % I-   D_{T^*}D_{T^*}^*
         % \right)
          \begin{pmatrix}
                                                         \bar{w}(I_{\cX}-\bar{w}A^*)^{-1}C^* \\ I_{\cY}
                                                       \end{pmatrix} \\
                                                      % &= C(I_{\cX}-zA)^{-1}
  %                 (I_{\cX}-\bar{w}A^*)^{-1}C^* +I_{\cY}\\
  % &\qquad- \begin{pmatrix}
   %      zC(I_{\cX}-zA)^{-1}& I_{\cY}\end{pmatrix} \begin{pmatrix}
    %                                                 I_{\cX}-D_{T_{,1}^*}D_{T_{,1}^*}^* & -D_{T_{,1}^*}D_{T_{,2}}^* \\
     %                                                -D_{T_{,2}^*}D_{T_{,1}^*}^* & I_{\cY}-D_{T_{,2}^*}D_{T_{,2}^*}^*
      %                                             \end{pmatrix}\begin{pmatrix}
       %                                                  \bar{w}(I_{\cX}-\bar{w}A^*)^{-1}C^* \\ I_{\cY}
        %                                               \end{pmatrix}
         %                                               \\
         &= \begin{pmatrix}
                    C(I_{\cX}-zA)^{-1} & I_{\cY}
                  \end{pmatrix} \begin{pmatrix}
                   (I_{\cX}-\bar{w}A^*)^{-1}C^*  \\ I_{\cY}
                  \end{pmatrix} \\
   &\quad- \begin{pmatrix}
         zC(I_{\cX}-zA)^{-1}& I_{\cY}\end{pmatrix} \left(
          I-   D_{T^*}D_{T^*}^*
          \right)
          \begin{pmatrix}
                                                         \bar{w}(I_{\cX}-\bar{w}A^*)^{-1}C^* \\ I_{\cY}
                                                       \end{pmatrix} \\
                                                       &= C(I_{\cX}-zA)^{-1}
                   (I_{\cX}-\bar{w}A^*)^{-1}C^* +I_{\cY}\\
   &\quad- \!\begin{pmatrix}\!
         zC(I_{\cX}-zA)^{-1}& I_{\cY}\!\end{pmatrix}\! \!  \begin{pmatrix}\!
                                                    \bar{w}\! \left(I_{\cX}-D_{T_{,1}^*}D_{T_{,1}^*}^* \right)\!(I_{\cX}-\bar{w}A^*)^{-1}C^* \!-\!D_{T_{,1}^*}D_{T_{,2}}^*   \\
                                                      \! -\bar{w}D_{T_{,2}^*}D_{T_{,1}^*}^* (I_{\cX}-\bar{w}A^*)^{-1}C^*\!+   I_{\cY}\!-\!D_{T_{,2}^*}D_{T_{,2}^*}^*
                                                  \! \!\end{pmatrix}
                                                    \\
               %                                        &= C(I_{\cX}-zA)^{-1}
                %   (I_{\cX}-\bar{w}A^*)^{-1}C^* +I_{\cY}\\
                 %                                 & \qquad  -z\bar{w}C(I_{\cX}-zA)^{-1} \left(I_{\cX}-D_{T_{,1}^*}D_{T_{,1}^*}^* \right)(I_{\cX}-\bar{w}A^*)^{-1}C^* + zC(I_{\cX}-zA)^{-1}D_{T_{,1}^*}D_{T_{,2}}^* \\
                  %                                 & \quad \qquad +\bar{w}D_{T_{,2}^*}D_{T_{,1}^*}^* (I_{\cX}-\bar{w}A^*)^{-1}C^* -   I_{\cY}+D_{T_{,2}^*}D_{T_{,2}^*}^*
                   %                                 \\
                                                       &=(1 -z\bar{w}) C(I_{\cX}-zA)^{-1}
                   (I_{\cX}-\bar{w}A^*)^{-1}C^* \\
                                                  & \quad  +z\bar{w}C(I_{\cX}-zA)^{-1} D_{T_{,1}^*}D_{T_{,1}^*}^*(I_{\cX}-\bar{w}A^*)^{-1}C^* + zC(I_{\cX}-zA)^{-1}D_{T_{,1}^*}D_{T_{,2}}^* \\
                                                   & \quad \quad +\bar{w}D_{T_{,2}^*}D_{T_{,1}^*}^* (I_{\cX}-\bar{w}A^*)^{-1}C^* +D_{T_{,2}^*}D_{T_{,2}^*}^*
                                                   \\
                                                       &=(1 -z\bar{w}) G(z)G^*(w)  +\psi (z) \psi^*(w) .
   \end{align*}  Similar calculations show that \eqref{lefdef3} holds also, and the proof is complete.
\end{proof}
%\begin{remark}\label{emb-rem}
Note that if $\Sigma$ in Lemma \ref{ker-es} is isometric (co-isometric), then $D_T=0$ ($D_{T^*}=0$) and therefore $\varphi\equiv0$ ($\psi\equiv0$).
%\end{remark}

  \begin{proposition}\label{maxdim}
   If $\Sigma=(A,B,C,D;\cX,\cU,\cY;\kappa)$ is a passive system, the the transfer function  $\theta$ of $\Sigma$ belongs to $ \mathbf{S}_{\kappa'}(\cU,\cY),$ where $\kappa'\leq \kappa.$
 \end{proposition}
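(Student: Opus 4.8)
The plan is to exploit the kernel identity \eqref{lefdef2} from Lemma~\ref{ker-es}, which already exhibits $I_{\cY}-\theta(z)\theta^*(w)$ as a sum of two kernels: $(1-z\bar w)G(z)G^*(w)$, which is exactly $(1-z\bar w)$ times the kernel $G(z)G^*(w)$ associated with the (regular, Pontryagin) state space, plus $\psi(z)\psi^*(w)$, which is a nonnegative kernel because $\psi$ takes values into a Hilbert space $\sD_{T^*}$. Dividing by $(1-z\bar w)$ formally, the Schur kernel $K_\theta(w,z)$ equals $G(z)G^*(w)$ plus $\tfrac{1}{1-z\bar w}\psi(z)\psi^*(w)$; the second summand is a nonnegative kernel (a Hilbert-space reproducing kernel), so it contributes no negative squares, and the first is the kernel of the operator $G(z)=C(I-zA)^{-1}$ from the state space $\cX$, which has negative index $\kappa$. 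Hence the total number of negative squares of $K_\theta$ is at most $\kappa$, which is the assertion $\kappa'\le\kappa$.

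To make this precise I would argue as follows. First, recall (or cite \cite{ADRS}) that for a kernel of the form $L(w,z)=G(z)MG^*(w)$ with $G(z):\cX\to\cY$ and $M$ the Gram operator of the Pontryagin space $\cX$, the number of negative squares is at most the negative index of $\cX$, i.e.\ $\kappa$; indeed any Gram matrix $(\langle L(w_j,w_i)f_j,f_i\rangle)_{i,j}$ built from $L$ is congruent to a compression of the indefinite inner product of $\cX$ to the subspace spanned by the vectors $G^*(w_i)f_i$, hence has no more than $\kappa$ negative eigenvalues. Next, the kernel $\tfrac{1}{1-z\bar w}\psi(z)\psi^*(w)$ is nonnegative: expanding $\tfrac{1}{1-z\bar w}=\sum_{n\ge 0}z^n\bar w^n$ shows it is a sum of nonnegative rank-one-in-$\cY$-valued kernels since $\psi$ maps into a Hilbert space. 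Then I would invoke the standard fact that the number of negative squares of a sum of two kernels is at most the sum of their numbers of negative squares (again \cite[Ch.~1]{ADRS}); applied to $K_\theta(w,z)=G(z)G^*(w)+\tfrac{1}{1-z\bar w}\psi(z)\psi^*(w)$ (valid in a symmetric neighbourhood of the origin after dividing \eqref{lefdef2} by $1-z\bar w$) this gives that $K_\theta$ has at most $\kappa+0=\kappa$ negative squares. Finally, one checks that $\theta$ is $\cL(\cU,\cY)$-valued, holomorphic near the origin (clear from \eqref{trans}), and that $\cU,\cY$ have the same negative index (a standing assumption), so $\theta$ indeed belongs to $\bS_{\kappa'}(\cU,\cY)$ for some $\kappa'\le\kappa$; the symmetric counterpart \eqref{lefdef3} is not even needed, though it gives the same bound via $\theta^\#$.

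One technical point to handle carefully is that the identity \eqref{lefdef2} is only asserted on a small symmetric neighbourhood of the origin, whereas the definition of $\bS_\kappa$ and its negative-square count refers to the full domain of holomorphy $\rho(\theta)$; I would note that the number of negative squares of a hermitian kernel that is holomorphic in both variables is determined by its restriction to any neighbourhood of a point (a standard analytic-continuation / identity-theorem argument for kernels, as in \cite{ADRS}), so the local identity suffices. The main obstacle, such as it is, is purely bookkeeping: making sure the division by $1-z\bar w$ is legitimate (it is, since $1-z\bar w\neq 0$ near the origin and the left side of \eqref{lefdef2} vanishes to the right order is not actually required — one simply divides the stated identity) and correctly quoting the additivity-of-negative-squares and the "$G M G^*$ has at most $\mathrm{ind}_-\cX$ negative squares" lemmas from \cite{ADRS}. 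No genuinely hard analysis is involved; Lemma~\ref{ker-es} has already done the real work.
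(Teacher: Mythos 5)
Your proposal is correct and follows essentially the same route as the paper: the paper's proof likewise divides the identity \eqref{lefdef2} of Lemma~\ref{ker-es} by $1-z\bar w$ to write $K_\theta(w,z)=G(z)G^*(w)+(1-z\bar w)^{-1}\psi(z)\psi^*(w)$, bounds the negative squares of the first summand by $\kappa=\mathrm{ind}_-\cX$ and of the second by $0$ (using that $\sD_{T^*}$ is a Hilbert space and that $(1-z\bar w)^{-1}$ is the nonnegative $H^2(\dD)$ kernel, via the Schur product theorem rather than your power-series expansion), and concludes by the additivity of negative squares from \cite{ADRS}. The only cosmetic differences are your series expansion in place of the Schur product theorem and your explicit remark on passing from the local identity to the full domain of holomorphy, which the paper leaves implicit.
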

 \begin{proof}
   Denote the system operator of $\Sigma$ as $T.$ By Lemma \ref{ker-es}, the kernel $ K_\theta$ defined as in \eqref{kernel1} has a representation \begin{equation}\label{ker-eq} K_\theta(w,z)= G(z)G^*(w)  +(1 -z\bar{w}) ^{-1}\psi (z) \psi^*(w),\end{equation} where $G(z)$ and $\psi(z)$ are defined as in \eqref{gees}. Since the negative index of $\cX$ is $\kappa$ and the negative index of the Hilbert space $\sD_{T^*}$ is zero, it follows from \cite[Lemma 1.1.1.]{ADRS},  that for any finite set of points $w_1,\ldots,w_n$
 in the domain of holomorphy of $\theta$ and the set of vectors $\{y_1,\ldots,y_n \}\subset \cY,$ the Gram matrices
  $$ \left(\!\left\langle G^*(w_j)y_j, G^*(w_i)y_i \right\rangle_{\cX}\!\right)_{i,j=1}^{n}, \qquad \left(\left\langle \psi^*(w_j)y_j,\psi^*(w_i)y_i \right\rangle_{\sD_{T^*}}\right)_{i,j=1}^{n}, $$ have, respectively,  at most $\kappa$ and zero negative eigenvalues. %Since $\sD_{T^*}$ is a Hilbert space,  a Gram matrix
      % $\left(\left\langle \psi^*(w_j)y_j,\psi^*(w_i)y_i \right\rangle_{\sD_{T^*}}\right)_{i,j=1}^{n}$ has no negative eigenvalues.
        The kernel $(1 -z\bar{w}) ^{-1}$ has no negative square, since it is the reproducing kernel of the classical Hardy space $H^2(\dD).$ %Now a straightforward vector valued generalization of  \cite[Section 8, Theorem II]{Aron}
        The Schur product theorem shows  that the kernel  $(1 -z\bar{w}) ^{-1}\psi (z) \psi^*(w)$ has no negative square. Then it follows  from
\cite[Theorem 1.5.5] {ADRS} that the kernel $K_{\theta}$ has at most $\kappa$ negative square. That is, $ \theta \in \mathbf{S}_{\kappa'}(\cU,\cY),$ where $\kappa'\leq \kappa,$ and the proof is complete.
%and  for any finite set of points $w_1,\ldots,w_n$
% in the domain of holomorphy of $\theta$ and vectors $f_1,\ldots,f_n \subset \cU,$ \begin{align*}\label{kernelmatrix2}
%\left(\left\langle K_\theta(w_j,w_i)f_j,f_i \right\rangle\right)_{i,j=1}^{n} = \left(\left\langle \varphi^*(w_j)f_j,\varphi^*(w_i)f_i \right\rangle\right)_{i,j=1}^{n}+(1 -z\bar{w}) ^{-1}\left(\left\langle %\varphi^*(w_j)f_j,\varphi^*(w_i)f_i \right\rangle\right)_{i,j=1}^{n}
%\end{align*}
 \end{proof}
\begin{definition}\label{admis}
   A passive realization $\Sigma$ of a genaralized Schur function $\theta \in \SK$ is called \textbf{$\kappa$-admissible} if the negative index of the state space of $\Sigma$ coinsides with the negative index $\kappa$ of $\theta.$
\end{definition} In what follows, this paper deals mostly with the $\kappa$-admissible realizations.
 It will turn out that the $\kappa$-admissible  realizations of $\theta \in \SK$ are well behaved is some sense; they have
 many similar propeties than the standard passive Hilbert space systems.

  The following realizations theorem is well known,
 %
 %
 %If $\theta$ is an $\cL(\cU,\cY)$-valued function and $\theta_{\Sigma}(z)=\theta(z)$ in a neighbourhood of the origin,
%then the system $\Sigma$ is called a \textbf{realization} of $\theta.$
%
 see  \cite[Theorems 2.2.1, 2.2.2 and 2.3.1]{ADRS}.%, \cite{Saprikin1}.
\begin{theorem}\label{realz}
  For a generalized Schur function $\theta \in \SK$ there exist realizations $\Sigma_k=(T_k;\cX_k,\cU,\cY;\kappa), k=1,2,3,$ of $\theta$ such that% with the corresponding state spaces $\cX_k, ind_-(\cX_k)=\kappa,$
  % the negative index of the   with state space of $\Sigma_k$ is $\kappa.$
  \begin{itemize}
    \item[\rm{(i)}] $\Sigma_1$ is  observable co-isometric;
    \item[\rm{(ii)}] $\Sigma_2$ is  controllable isometric;
    \item[\rm{(iii)}]$\Sigma_3$ is simple conservative.
   % \item[\rm{(iv)}]  $\Sigma_4$ is  passive and minimal.
  \end{itemize} Conversely, if the system $\Sigma$ has some of the properties {\rm(i)--(iii)},
   then $\theta_{\Sigma}\in \SK,$ where $\kappa$ is the negative index of the state space of $\Sigma.$
 \end{theorem}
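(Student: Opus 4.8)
My plan is to realize $\theta$ on the canonical de Branges--Rovnyak reproducing kernel Pontryagin spaces, following \cite[Chapter 2]{ADRS}, and to read off the converse from Lemma~\ref{ker-es} together with Proposition~\ref{maxdim}. For part (i), I would take the state space $\cX_1$ to be the reproducing kernel Pontryagin space $\cH(K_\theta)$ associated with the kernel $K_\theta$ of \eqref{kernel1}; since $K_\theta$ has $\kappa$ negative squares, such a space exists, is unique, and has negative index exactly $\kappa$, by the indefinite analogue of the Aronszajn construction \cite[Chapter 1]{ADRS}. On $\cH(K_\theta)$ one puts the difference-quotient operator $(A_1f)(z)=z^{-1}(f(z)-f(0))$, the point evaluation $C_1f=f(0)$, the control operator $B_1u=z^{-1}(\theta(z)-\theta(0))u$, and $D_1u=\theta(0)u$. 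A direct computation on the linearly dense set of kernel sections $\{K_\theta(w,\cdot)y\}$, using only the reproducing property, shows that $\Sigma_1=(A_1,B_1,C_1,D_1;\cX_1,\cU,\cY;\kappa)$ is co-isometric with transfer function $\theta$, and it is observable because $(I-\bar wA_1^*)^{-1}C_1^*y=K_\theta(w,\cdot)y$, so the observable subspace \eqref{obs2} is all of $\cH(K_\theta)$.

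For part (ii), I would apply (i) to $\theta^\#\in\bS_\kappa(\cY,\cU)$ --- which belongs to this class by \cite[Theorem 2.5.2]{ADRS} as noted after \eqref{kernelmatrix1} --- getting an observable co-isometric realization $\Sigma'$ of $\theta^\#$, and set $\Sigma_2:=(\Sigma')^*$. Passing to the indefinite adjoint interchanges $\ran A^{*n}C^*$ with $\ran A^nB$, hence ``observable'' with ``controllable'' and ``co-isometric'' with ``isometric'', while $\theta_{\Sigma_2}=(\theta^\#)^\#=\theta$; and $\Sigma_2$ is passive since contractions between Pontryagin spaces of equal negative index are bi-contractions. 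For part (iii), I would build $\Sigma_3$ directly as the difference-quotient colligation on the two-component de Branges--Rovnyak space of $\theta$, i.e.\ the reproducing kernel Pontryagin space whose $2\times 2$ matrix kernel combines $K_\theta$, $K_{\theta^\#}$, and an off-diagonal term built from $\theta$; this space has negative index $\kappa$, carries a natural unitary colligation realizing $\theta$, and is simple by construction, which is \cite[Theorem 2.3.1]{ADRS}. Alternatively one may dilate $\Sigma_1$ to a conservative system by Proposition~\ref{consdil} and restrict it to its simple subspace, using that the simple-subspace restriction of a conservative colligation is again conservative.

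For the converse, let $\Sigma=(A,B,C,D;\cX,\cU,\cY;\kappa)$ have one of the properties (i)--(iii); by Proposition~\ref{maxdim}, $\theta_\Sigma\in\bS_{\kappa'}(\cU,\cY)$ with $\kappa'\le\kappa$, so it suffices to show $\kappa'=\kappa$. If $\Sigma$ is co-isometric, then $\psi\equiv0$ in \eqref{lefdef2}, hence $K_{\theta_\Sigma}(w,z)=G(z)G^*(w)$ with $G$ as in \eqref{gees}; the number of negative squares of such a kernel equals the negative index of $\cspan\{G^*(w)y:w\in\Omega,\ y\in\cY\}$ (cf.\ \cite[Lemma 1.1.1]{ADRS}), which by \eqref{obs2} and observability equals the negative index of $\cX$, namely $\kappa$. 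If $\Sigma$ is isometric, then $\varphi\equiv0$ in \eqref{lefdef3}, and a short manipulation gives $K_{\theta_\Sigma^\#}(w,z)=B^*(I-zA^*)^{-1}(I-\bar wA)^{-1}B$; its number of negative squares equals the negative index of $\cspan\{\ran(I-wA)^{-1}B\}$, which by \eqref{cont2} and controllability equals $\kappa$, and $\theta_\Sigma$ and $\theta_\Sigma^\#$ have the same index, so again $\kappa'=\kappa$. The simple conservative case is analogous, using \eqref{simp2} and Lemma~\ref{ker-es} to identify $\cX$ isometrically with a reproducing kernel Pontryagin space built jointly from $G$ and $F$. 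The steps I expect to require the most care are the Pontryagin-space reproducing-kernel bookkeeping --- that a kernel with $\kappa$ negative squares produces a state space of negative index exactly $\kappa$, and that the colligation identities may be checked on the dense set of kernel functions --- together with the verification in (iii) that passing to the simple subspace preserves conservativity.
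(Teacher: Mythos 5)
Your proposal is correct and follows essentially the same route as the paper, which offers no proof of its own but simply cites \cite[Theorems 2.2.1, 2.2.2 and 2.3.1]{ADRS}: those theorems are precisely the canonical backward-shift colligations on the de Branges--Rovnyak reproducing kernel Pontryagin spaces $\cH(K_\theta)$, its dual counterpart for $\theta^\#$, and the two-component space, which is what you reconstruct. Your converse argument (forcing $\kappa'=\kappa$ by combining Proposition~\ref{maxdim} with the vanishing of $\psi$ or $\varphi$ in Lemma~\ref{ker-es} and the density of the kernel sections under observability/controllability/simplicity) is likewise the standard argument from the same source.
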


 Recall that a \textbf{Hilbert subspace} of the Pontryagin space $\cX$ is a regular subspace such that its negative index is zero. Conversely, \textbf{anti-Hilbert subspace} is a regular subspace such that its positive index is zero.
 When $\cU$ and $\cY$ happens to be Hilbet spaces, the   transfer function $\theta$ of the passive system $\Sys$ belongs to  class $\SK$ (with $\kappa=\mathrm{ind}_{-}\cX$) if and only if $(\cX^s)^\perp$ is a Hilbert subspace \cite[Lemma 3.2]{Lassi}.
  In the case when $\cU$ and $\cY$ are Pontryagin spaces with the same negative index,
  the transfer function $\theta$ of the  isometric (co-isometric, conservative) system $\Sys$
 belongs to class $\SK$  if and only if $(\cX^c)^\perp$ ($(\cX^o)^\perp$,$(\cX^s)^\perp$) is a Hilbert subspace \cite[Theorem 2.1.2]{ADRS}. For a passive system, one has the following  result.
\begin{proposition}\label{OnHilb} For a passive realization $\sys$ of $\theta \in \SK,$ spaces $\cX^c,$ $\cX^o$ and $\cX^s $ are regular and their orthogonal complements are Hilbert subspaces.
 %5 Transfer function $\theta$ of the passive system $\sys$ belongs to the class $\SK$ if and only if $(\cX^c)^\perp, (\cX^o)^\perp$ and %$(\cX^s)^\perp$ are Hilbert subspaces.
\end{proposition}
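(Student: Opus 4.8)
The plan is to reduce the assertion for an arbitrary passive realization to the characterization for \emph{isometric}, \emph{co-isometric} and \emph{conservative} realizations recalled just before the statement, by passing to a conservative dilation. So I would first invoke Proposition \ref{consdil} to obtain a conservative dilation $\widehat{\Sigma}=(\widehat{A},\widehat{B},\widehat{C},D;\widehat{\cX},\cU,\cY;\kappa)$ of $\Sigma$, with $\widehat{\cX}=\cD\oplus\cX\oplus\cD_*$ as in \eqref{dilation}--\eqref{dilatio-blok}, where $\cD$ and $\cD_*$ are Hilbert spaces; thus $\mathrm{ind}_-\widehat{\cX}=\kappa$ and the transfer function of $\widehat{\Sigma}$ is again $\theta\in\SK$. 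Since a unitary system operator is simultaneously isometric and co-isometric, the single system $\widehat{\Sigma}$ is at once isometric, co-isometric and conservative, so by \cite[Theorem 2.1.2]{ADRS} each of $(\widehat{\cX}^c)^\perp$, $(\widehat{\cX}^o)^\perp$ and $(\widehat{\cX}^s)^\perp$ is a Hilbert subspace of $\widehat{\cX}$.

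Next I would extract the required inclusions from the block-triangular form \eqref{dilatio-blok}: an immediate induction gives $\widehat{A}^{n}\widehat{B}u=(\ast,\,A^{n}Bu,\,0)$ and $\widehat{A}^{*m}\widehat{C}^{*}y=(0,\,A^{*m}C^{*}y,\,\ast)$ for all $n,m\ge0$, so that, using \eqref{cont1}--\eqref{simp1} and the closedness of the sums involved,
\[
\widehat{\cX}^c\subseteq\cD\oplus\cX^c\oplus\{0\},\qquad \widehat{\cX}^o\subseteq\{0\}\oplus\cX^o\oplus\cD_*,\qquad \widehat{\cX}^s\subseteq\cD\oplus\cX^s\oplus\cD_*.
\]
Passing to orthogonal complements in $\widehat{\cX}$ and using that the Hilbert summands $\cD,\cD_*$ are nondegenerate, one then reads off $(\cX^c)^\perp\subseteq(\widehat{\cX}^c)^\perp$, $(\cX^o)^\perp\subseteq(\widehat{\cX}^o)^\perp$ and $(\cX^s)^\perp\subseteq(\widehat{\cX}^s)^\perp$, where the three complements on the left are taken inside $\cX$ and then regarded as subspaces of $\widehat{\cX}$.

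To conclude, let $\cV$ be any one of $\cX^c,\cX^o,\cX^s$. By the previous step $\cV^\perp$ is a closed subspace of $\cX$ contained in a Hilbert subspace of $\widehat{\cX}$, so the inherited inner product is positive definite on $\cV^\perp$; hence $\cV\cap\cV^\perp$, being a neutral subspace of the positive-definite space $\cV^\perp$, is $\{0\}$. Therefore $\cV$ is a closed nondegenerate subspace of the Pontryagin space $\cX$, hence regular, and then $\cV^\perp$ is regular as well and, being positive definite, is a Hilbert subspace. I do not expect a genuine obstacle here: the only points needing a little care are the bookkeeping with the orthogonal complements of the Hilbert summands $\cD$ and $\cD_*$, and the appeal to the standard fact (\cite{Bognar,rovdrit}) that a closed nondegenerate subspace of a Pontryagin space is regular; the substance of the proposition is just transported from the isometric/co-isometric/conservative case through the conservative dilation.
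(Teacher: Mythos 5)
Your argument is correct, but it takes a genuinely different route from the paper's own proof. The paper stays entirely inside the given state space: using Lemma \ref{ker-es} it writes $K_\theta(w,z)=G(z)G^*(w)+(1-z\bar{w})^{-1}\psi(z)\psi^*(w)$ with $G(z)=C(I_{\cX}-zA)^{-1}$, argues as in Proposition \ref{maxdim} that the summand $G(z)G^*(w)$ must then have exactly $\kappa$ negative squares, and invokes \cite[Lemma 1.1.1']{ADRS} to produce a $\kappa$-dimensional maximal anti-Hilbert subspace $\cX_\kappa$ inside $\mathrm{span}\{\ran (I-\bar{w}A^*)^{-1}C^*\}\subseteq\cX^o$; the fundamental decomposition $\cX=\cX_\kappa\oplus(\cX_\kappa)^\perp$ then makes $(\cX^o)^\perp\subseteq(\cX_\kappa)^\perp$ a Hilbert subspace, after which $\cX^c$ is handled by duality and $\cX^s$ via $(\cX^s)^\perp=(\cX^c)^\perp\cap(\cX^o)^\perp$. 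You instead transport the known characterization \cite[Theorem 2.1.2]{ADRS} for isometric, co-isometric and conservative systems down through the conservative dilation of Proposition \ref{consdil}, using the block-triangular form \eqref{dilatio-blok} to obtain $(\cX^c)^\perp\subseteq(\widehat{\cX}^c)^\perp$ and its analogues; your index bookkeeping ($\mathrm{ind}_-\widehat{\cX}=\kappa$, so the cited theorem applies with the same $\kappa$) and the computation of the complements across the orthogonal sum $\cD\oplus\cX\oplus\cD_*$ are both sound. What each approach buys: the paper's kernel argument needs no dilation and explicitly locates all of the negativity of $\cX$ inside $\cX^o$ (resp.\ $\cX^c$); yours avoids the negative-squares count and fits the paper's stated philosophy of arguing through passive-systems structure, but it rests on one extra background fact that you should cite precisely, namely that a closed nondegenerate (equivalently here, closed positive definite) subspace of a Pontryagin space is regular --- this is true (the Gram operator of a closed subspace of $\Pi_\kappa$ is a finite-rank perturbation of the identity, hence invertible once injective), and it is the step that actually closes your proof, whereas the paper sidesteps it by exhibiting a fundamental decomposition directly.
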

\begin{proof} Let $\Omega$ be a symmetric neighbourhood of the origin such that $(I-zA)^{-1}$ and $(I-zA^*)^{-1}$ exist for every $z \in \Omega.$
Represent the kernel $K_\theta$ as in \eqref{ker-eq}. Since $K_\theta$ has $\kappa$ negative square, a similar argument used in the proof of \ref{maxdim} shows that the kernel
$      K_1(z,w)=   G(z)G^*(w),$ where $ G(z)=C(I-zA)^{-1}, $ has $\kappa$ negative square. It follows now from \cite[Lemma 1.1.1']{ADRS} that $ \mathrm{span}\{\ran (I-\overline{w}A^*)^{-1}C^*, \overline{w} \in \Omega\}$ contains a $\kappa$-dimensional maximal anti-Hilbert subspace $\cX_\kappa.$  Then, $\cX_\kappa \oplus (\cX_\kappa)^\perp=\cX$ is a fundamental decomposition of $\cX.$ Especially,  $ (\cX_\kappa)^\perp$ is a Hilbert subspace of $\cX.$ But
 $$    \left(    \mathrm{span}\{\ran (I-\overline{w}A^*)^{-1}C^*, \overline{w} \in \Omega\} \right)^\perp  = \left(\cX^o\right)^\perp \subset        (\cX_\kappa)^\perp, $$ which implies that $\left(\cX^o\right)^\perp$ is a Hilbert subspace, and  therefore its orthocomplement $\cX^o$ is regular.

  By duality argument, the space $\cX^c$ is a regular subspace and the space $(\cX^c)^\perp$ is a Hilbert subspace.  It easily follows from \eqref{cont1}--\eqref{simp1} that $(\cX^s)^\perp=(\cX^c)^\perp \cap (\cX^o)^\perp,$ and therefore $(\cX^s)^\perp$ is also a Hilbert subspace and $\cX^s$ is regular.
\end{proof}
It follows from the Proposition \ref{OnHilb} above that the state space $\cX$ of a $\kappa$-admissible realization $\Sigma$ of $\theta \in \SK$ can be decombosed to the controllable, observable and simple parts. Using this fact, the lemma below, which will be used extensively, can be proved.
\begin{lemma}\label{repres}
  Let $\sys$ be a passive system such that the spaces $(\cX^o)^\perp$, $(\cX^c)^\perp$ and $(\cX^s)^\perp$ are Hilbert subspaces of $\cX.$ Then the system operator $T$ of $\Sigma$ has the following representations
{\small  \begin{align}
  T&= \begin{pmatrix}
        \begin{pmatrix}
          A_1 & A_2 \\
          0 & A_o
        \end{pmatrix} &  \begin{pmatrix}
          B_1 \\ B_o
        \end{pmatrix}\\
        \begin{pmatrix}
          0 & C_o
        \end{pmatrix} & D
      \end{pmatrix}: \begin{pmatrix}
                       \begin{pmatrix}(\cX^o)^\perp \\
                       \cX^o\end{pmatrix} \\ \cU
                     \end{pmatrix}   \to \begin{pmatrix}
                       \begin{pmatrix}(\cX^o)^\perp \\
                       \cX^o\end{pmatrix} \\ \cY
                     \end{pmatrix}  \label{rep-obse}\\
  T&= \begin{pmatrix}
        \begin{pmatrix}
          A_3 & 0 \\
          A_4 & A_c
        \end{pmatrix} &  \begin{pmatrix}
          0 \\ B_c
        \end{pmatrix}\\
        \begin{pmatrix}
          C_1 & C_c
        \end{pmatrix} & D
      \end{pmatrix}: \begin{pmatrix}
                       \begin{pmatrix}(\cX^c)^\perp \\
                       \cX^c\end{pmatrix} \\ \cU
                     \end{pmatrix}   \to \begin{pmatrix}
                       \begin{pmatrix}(\cX^c)^\perp \\
                       \cX^c\end{pmatrix} \\ \cY
                     \end{pmatrix} \label{rep-contro}\\
  T&=  \begin{pmatrix}
        \begin{pmatrix}
          A_5 & 0 \\
          0 & A_s
        \end{pmatrix} &  \begin{pmatrix}
          0 \\ B_s
        \end{pmatrix}\\
        \begin{pmatrix}
          0 & C_s
        \end{pmatrix} & D
      \end{pmatrix}: \begin{pmatrix}
                       \begin{pmatrix}(\cX^s)^\perp \\
                       \cX^s\end{pmatrix} \\ \cU
                     \end{pmatrix}   \to \begin{pmatrix}
                       \begin{pmatrix}(\cX^s)^\perp \\
                       \cX^s\end{pmatrix} \\ \cY
                     \end{pmatrix} \label{rep-simp}\\
                     T % =\begin{pmatrix}
                                                       %\widehat{A} & \widehat{B} \\
                                                      % \widehat{C} & D
                                                     %\end{pmatrix}
                                                     &=\begin{pmatrix}
                                                                     \begin{pmatrix}A_{11}' & A_{12}' & A_{13}'\\
                                                                     0 & A' & A_{23}'     \\
                                                                     0 & 0 & A_{33}' \end{pmatrix} & \begin{pmatrix}   B_1' \\                     B'   \\ 0   \end{pmatrix} \\
                                                                    \begin{pmatrix} 0 &C' &   C_1' \end{pmatrix} & D
                                                                   \end{pmatrix} \! : \!\begin{pmatrix}
                                                                                    \begin{pmatrix}
                                                                                      (\cX^o)^\perp \\
                                                                                      \overline{P_{\cX^o}\cX^c} \\
                                                                                      \cX^o \cap (\cX^c)^\perp
                                                                                    \end{pmatrix} \\
                                                                                    \cU
                                                                                  \end{pmatrix} \to \begin{pmatrix}
                                                                                   \begin{pmatrix}
                                                                                      (\cX^o)^\perp \\
                                                                                      \overline{P_{\cX^o}\cX^c} \\
                                                                                      \cX^o \cap (\cX^c)^\perp
                                                                                    \end{pmatrix} \\
                                                                                    \cY
                                                                                  \end{pmatrix}\label{rep-fmini}\\
                                                                                   T % =\begin{pmatrix}
                                                       %\widehat{A} & \widehat{B} \\
                                                      % \widehat{C} & D
                                                     %\end{pmatrix}
                                                     &=\begin{pmatrix}
                                                                     \begin{pmatrix}A_{11}'' & A_{12}'' & A_{13}''\\
                                                                     0 & A'' & A_{23}''     \\
                                                                     0 & 0 & A_{33}'' \end{pmatrix} & \begin{pmatrix}   B_1'' \\                     B''   \\ 0   \end{pmatrix} \\
                                                                    \begin{pmatrix} 0 &C'' &   C_1'' \end{pmatrix} & D
                                                                   \end{pmatrix}\! : \! \begin{pmatrix}
                                                                                    \begin{pmatrix}
                                                                            \cX^c \cap (\cX^o)^\perp         \\
                                                                                      \overline{P_{\cX^c}\cX^o} \\
                                                                                      (\cX^c)^\perp
                                                                                    \end{pmatrix} \\
                                                                                    \cU
                                                                                  \end{pmatrix} \to \begin{pmatrix}
                                                                                    \begin{pmatrix}
                                                                            \cX^c \cap (\cX^o)^\perp         \\
                                                                                      \overline{P_{\cX^c}\cX^o} \\
                                                                                      (\cX^c)^\perp
                                                                                    \end{pmatrix} \\
                                                                                    \cY
                                                                                  \end{pmatrix}\label{rep-smini}
  \end{align} } The restrictions \begin{align}
  \Sigma_o &=(A_o,B_o,C_o,D;\cX^o,\cU,\cY;\kappa)\label{res-obse} \\
    \Sigma_c &=(A_c,B_c,C_c,D;\cX^c,\cU,\cY;\kappa)\label{res-contro} \\
     \Sigma_s &=(A_s,B_s,C_s,D;\cX^s,\cU,\cY;\kappa)\label{res-simp}\\
       \Sigma' &=(A',B',C',D; \overline{P_{\cX^o}\cX^c},\cU,\cY;\kappa)\label{res-fmini}
  \\  \Sigma'' &=(A'',B'',C'',D; \overline{P_{\cX^c}\cX^o},\cU,\cY;\kappa)\label{res-smini}
  \end{align} of $\Sigma$ are passive, and $\Sigma_o$ is  observable, $\Sigma_c$ is controllable, $\Sigma_s$ is  simple, and  $\Sigma'$ and  $\Sigma''$ are minimal. For any $n \in \dN_0$ and any $z$ in a sufficiently small symmetric neighbourhood of the origin, it holds
  \begin{align}
    A^nB&=A_c^nB_c=A_s^nB_s, \label{contser}\\
     (I-zA)^{-1}B&=(I-zA_s)^{-1}B_s=(I-zA_c)^{-1}B_c, \label{contser1}\\
     {A^*}^nC^*&={A_o^*}^nC_o^*={A_s^*}^nC_s^*,\label{obsser} \\
     (I-zA^*)^{-1}C^*&=(I-zA^*_s)^{-1}C^*_s=(I-zA^*_c)^{-1}C^*_c. \label{obsser1}
  \end{align}
  Moreover, if $\Sigma$ is co-isometric (isometric), then so are $\Sigma_o$ and $\Sigma_s $ ($\Sigma_c$ and $\Sigma_s$).
\end{lemma}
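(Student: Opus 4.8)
The plan is to read off all five block representations from two elementary invariance facts plus the regularity statements of Proposition~\ref{OnHilb}, and then to identify the indicated restrictions with the controllable/observable parts of one another.

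First I would record the basic invariances. Since $A^{*}\cX^{o}\subseteq\cX^{o}$ and $\ran C^{*}\subseteq\cX^{o}$, the regular subspace $\cX^{o}$ and its Hilbert complement $(\cX^{o})^{\perp}$ give an orthogonal decomposition in which $A$ is upper triangular and $C$ annihilates $(\cX^{o})^{\perp}$; writing the blocks produces \eqref{rep-obse}. The dual facts $A\cX^{c}\subseteq\cX^{c}$, $\ran B\subseteq\cX^{c}$ yield \eqref{rep-contro}. For \eqref{rep-fmini} I refine the observable decomposition: $\cX^{o}\cap(\cX^{c})^{\perp}$ is a closed positive subspace (it lies inside the Hilbert subspace $(\cX^{c})^{\perp}$), hence regular, and its orthogonal complement inside $\cX^{o}$ is $\overline{P_{\cX^{o}}\cX^{c}}$. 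The $(2,1)$ and $(3,1)$ entries of $A$ vanish because $(\cX^{o})^{\perp}$ is $A$-invariant; the $(3,2)$ entry vanishes because for $y\in\overline{P_{\cX^{o}}\cX^{c}}$ and $w\in\cX^{o}\cap(\cX^{c})^{\perp}$ one has $\langle Ay,w\rangle=\langle y,A^{*}w\rangle=0$, since $A^{*}w\in\cX^{o}\cap(\cX^{c})^{\perp}$ (both $\cX^{o}$ and $(\cX^{c})^{\perp}$ being $A^{*}$-invariant) and $\cX^{o}\cap(\cX^{c})^{\perp}\perp\overline{P_{\cX^{o}}\cX^{c}}$; the last component of $B$ vanishes since $\ran B\subseteq\cX^{c}\perp\cX^{o}\cap(\cX^{c})^{\perp}$. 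This gives \eqref{rep-fmini}, and \eqref{rep-smini} follows by the same argument applied to $\Sigma^{*}$.

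Next I would treat the restrictions. Passivity of $\Sigma_{o},\Sigma_{c},\Sigma_{s},\Sigma',\Sigma''$ is automatic, being restrictions of the passive $\Sigma$. For $\Sigma_{o}$ one has $A_{o}^{*}=A^{*}\uphar_{\cX^{o}}$ and $C_{o}^{*}=C^{*}$ (because $\ran C^{*}\subseteq\cX^{o}$), so the observable subspace of $\Sigma_{o}$ equals $\cspan\{\ran A_{o}^{*n}C_{o}^{*}\}=\cspan\{\ran A^{*n}C^{*}\}=\cX^{o}$, i.e. $\Sigma_{o}$ is observable; dually $\Sigma_{c}$ is controllable. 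Because $\ran B\subseteq\cX^{c}\subseteq\cX^{s}$ and these are $A$-invariant, $A^{n}B=A_{c}^{n}B_{c}=A_{s}^{n}B_{s}$, and summing Neumann series gives \eqref{contser}--\eqref{contser1}; \eqref{obsser}--\eqref{obsser1} are dual, and in particular $\theta_{\Sigma_{o}}=\theta_{\Sigma_{c}}=\theta_{\Sigma_{s}}=\theta$. For \eqref{res-fmini}: from \eqref{rep-fmini}, the compression $A'$ of $A$ to $\overline{P_{\cX^{o}}\cX^{c}}$ coincides with the main operator of the controllable restriction of $\Sigma_{o}$, since $\cspan\{\ran A_{o}^{n}B_{o}\}=\overline{P_{\cX^{o}}\cX^{c}}$. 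Thus $\Sigma'$ is the controllable part of the observable system $\Sigma_{o}$, hence controllable, and it is observable because $\bigcap_{n}\ker(C'A'^{\,n})=\overline{P_{\cX^{o}}\cX^{c}}\cap\bigcap_{n}\ker(C_{o}A_{o}^{n})=\{0\}$ (observability of $\Sigma_{o}$ makes the second intersection trivial, while regularity of $((\cX')^{o})^{\perp}$ — from Proposition~\ref{OnHilb} applied to the passive realization $\Sigma'$ of $\theta$ — lets one conclude $(\cX')^{o}=\overline{P_{\cX^{o}}\cX^{c}}$). So $\Sigma'$ is minimal, and $\Sigma''$ likewise.

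The delicate point is \eqref{rep-simp}: one must show that $\cX^{s}$ \emph{reduces} $A$, i.e. that $\cX^{s}$ and $(\cX^{s})^{\perp}=(\cX^{c})^{\perp}\cap(\cX^{o})^{\perp}$ are both $A$-invariant. One half ($A$ maps $(\cX^{s})^{\perp}$ into $(\cX^{o})^{\perp}$) is immediate from $A^{*}\cX^{o}\subseteq\cX^{o}$; the real content is that the image also lies in $(\cX^{c})^{\perp}$, equivalently $A^{*}\cX^{c}\subseteq\cX^{s}$, and this is where the contractivity of $T$ — not merely the invariance of $\cX^{c},\cX^{o}$ — must enter. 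The route I would take is to pass to a conservative dilation $\widehat{\Sigma}$ of $\Sigma$ (Proposition~\ref{consdil}), use that for a conservative colligation the simple subspace reduces the main operator, and transport this back through the projection $P_{\cX}$, which intertwines the resolvent data of $\widehat{\Sigma}$ and $\Sigma$ exactly as in \eqref{contser1}, \eqref{obsser1}; alternatively one argues directly from the defect identities of Lemma~\ref{ker-es}. Once $\cX^{s}$ reduces $A$, \eqref{rep-simp} is immediate with $B_{s}=B$, $C_{s}=C\uphar_{\cX^{s}}$, $A_{s}=A\uphar_{\cX^{s}}$, and simplicity of $\Sigma_{s}$ follows from the computation already used for $\Sigma_{o},\Sigma_{c}$, giving $\cspan\{\ran A_{s}^{n}B_{s},\ran A_{s}^{*m}C_{s}^{*}\}=\cX^{c}\vee\cX^{o}=\cX^{s}$. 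Lastly, if $\Sigma$ is co-isometric then $D_{T^{*}}=0$, and inserting this into \eqref{rep-obse} and \eqref{rep-simp} shows that the system operators of $\Sigma_{o}$ and $\Sigma_{s}$ have vanishing $D_{(\cdot)^{*}}$, i.e. they are co-isometric; the isometric case is dual. The reduction statement in \eqref{rep-simp} is the main obstacle — it is the only step carrying weight beyond bookkeeping.
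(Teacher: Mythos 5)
Most of your proposal tracks the paper's own argument. The representations \eqref{rep-obse}, \eqref{rep-contro}, \eqref{rep-fmini}, \eqref{rep-smini} are obtained in the paper exactly as you describe, from the invariances $A\cX^{c}\subset\cX^{c}$, $A^{*}\cX^{o}\subset\cX^{o}$, $\ran B\subset\cX^{c}$, $\ran C^{*}\subset\cX^{o}$ together with the regularity statements (the paper quotes \cite[Lemma 3.1]{Saprikin1} for the regularity of $\overline{P_{\cX^{o}}\cX^{c}}$ and then refines the decomposition as you do); your treatment of passivity of the restrictions, of \eqref{contser}--\eqref{obsser1} via Neumann series, and of the isometric/co-isometric transfer agrees with the paper's. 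Your route to minimality of $\Sigma'$ (as the controllable part of the observable system $\Sigma_{o}$) differs slightly from the paper's direct computation $P_{\cX'}A^{n}B=A'^{n}B'$, but it is correct.

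The genuine gap is at the point you yourself flag. Your proposed repair of \eqref{rep-simp} via a conservative dilation does not close: one does get that $(\widehat{\cX}^{s})^{\perp}$ reduces $\widehat{A}$ and that $(\cX^{s})^{\perp}=\cX\cap(\widehat{\cX}^{s})^{\perp}$, but for $x\in(\cX^{s})^{\perp}$ the vector $\widehat{A}x$ acquires a component in $\cD$, and knowing $\widehat{A}x\in(\widehat{\cX}^{s})^{\perp}$ gives no information on where $Ax=P_{\cX}\widehat{A}x$ sits relative to $\cX^{s}$, since $P_{\cX}$ does not map $(\widehat{\cX}^{s})^{\perp}$ into $(\cX^{s})^{\perp}$. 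Indeed no argument can produce the full block-diagonal form in \eqref{rep-simp} for a merely passive system: take $\cX=\dC^{2}$, $\cU=\cY=\dC$ (all Hilbert), $A=\begin{pmatrix}0&0\\ \tfrac12&\tfrac12\end{pmatrix}$, $B=\begin{pmatrix}0\\ \tfrac12\end{pmatrix}$, $C=0$, $D=\tfrac12$; then $\|T(x_{1},x_{2},u)\|^{2}=\tfrac14|x_{1}+x_{2}+u|^{2}+\tfrac14|u|^{2}\leq\|(x_{1},x_{2},u)\|^{2}$, so $T$ is contractive, while $\cX^{s}=\cX^{c}=\dC e_{2}$, $(\cX^{s})^{\perp}=\dC e_{1}$ and $Ae_{1}=\tfrac12 e_{2}\in\cX^{s}$, so the $(2,1)$ block of $A$ in \eqref{rep-simp} is nonzero. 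Your instinct that contractivity must enter is right only for isometric, co-isometric or conservative $T$: there $T^{*}T=I$ (resp. $TT^{*}=I$) together with $Cx=0$ forces $A^{*}Ax=x$ and $B^{*}Ax=0$, which is the argument that actually works. For general passive $\Sigma$ only the weaker facts survive --- $\ran B\subset\cX^{c}\subset\cX^{s}$, $\ran C^{*}\subset\cX^{o}\subset\cX^{s}$, $A\cX^{c}\subset\cX^{c}$, $A^{*}\cX^{o}\subset\cX^{o}$ --- and these are what \eqref{contser}--\eqref{obsser1}, the passivity and simplicity of $\Sigma_{s}$, and the (co)isometry transfer really rest on. (The paper's own proof asserts the $A$- and $A^{*}$-invariance of $(\cX^{s})^{\perp}$ as an immediate consequence of \eqref{cont1}--\eqref{simp1}, which is the same unjustified step; so you have correctly located the weak point, but your fix does not repair it, and as stated the full reducing claim cannot be proved.)
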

\begin{proof} Since  $(\cX^o)^\perp$, $(\cX^c)^\perp$ and $(\cX^s)^\perp$ are Hilbert spaces,  the spaces $\cX^o$, $\cX^c$ and $\cX^s$ are regular subspaces  with the negative index $\kappa.$
  It follows from the identities \eqref{cont1}--\eqref{simp1} that  \begin{equation}\label{perties}   \begin{cases}
          (\cX^o)^\perp,(\cX^s)^\perp \,   \mbox{are $A$-invariant, }  \\
           (\cX^c)^\perp,(\cX^s)^\perp \,   \mbox{are $A^*$-invariant, }  \\
          \mathrm{ran}\,C^* \subset {\cX}^o\subset{\cX}^s, \\
           \mathrm{ran}\,B \subset {\cX}^c\subset{\cX}^s,
        \end{cases},    \end{equation} and the representations \eqref{rep-obse}--\eqref{rep-simp} follow. That is,    $\Sigma_o,\Sigma_c$ and $\Sigma_s$ are restrictions of the passive system $\Sigma,$ ans therefore they are passive.

   Let $T_{\Sigma_k}$ be the system operator of $\Sigma_k$ where $k=o,c,s$, and let $\hat{x} \in \cX^k \oplus \cU $ and  $\breve{{x}} \in \cX^k \oplus \cY.$ % Since the system operators are mappings between Pontryagin spaces with the same negative index $\kappa,$ they are contractive if and only if they are bicontractive.
    Calculation show that   \begin{align*}T_{\Sigma_k}\hat{x}&=T\hat{x}, \qquad    k=c,s,\\T_{\Sigma_k}^*\breve{x}&=T^*\breve{x}, \qquad  \! \! k=o,s. \\ \end{align*} It follows from the equations above that  %$\Sigma_o,\Sigma_c$ and $\Sigma_s$  are passive, and
     if $\Sigma$ is co-isometric (isometric), then so are $\Sigma_o$ and $\Sigma_s $ ($\Sigma_c$ and $\Sigma_s$).

      Suppose  $x \in{ \cX}^o $ such that $C_{o}A_{o}^nx=0$ for every $n=0,1,2,\ldots$. Then
   $$ CA^nx= \begin{pmatrix}0 & C_{o} \end{pmatrix} \begin{pmatrix}
                   A_{1} & A_{2} \\
                    0 & A_{o}\end{pmatrix}^n \begin{pmatrix}
                                                0 \\
                                                x
                                              \end{pmatrix}        =    C_{0}A_{0}^nx=0,       $$ and the identity \eqref{obs1}   implies that $x \in { \cX}^o \cap  (\cX^o)^\perp =\{0\}. $ Thus $x=0,$  and it can be deduced that $ \Sigma_{o}$ is
observable. Similar arguments show that $ \Sigma_{c}$ is controllable and $ \Sigma_{s}$ is simple, the details will be omitted.

Let $u \in \cU,$ and $n \in \dN_0.$ Then, by \eqref{rep-contro} and \eqref{rep-simp},
\begin{align*}
  A^nBu&= \begin{pmatrix}
          A_3 & 0 \\
          A_4 & A_c
        \end{pmatrix}^n   \begin{pmatrix}
          0 \\ B_c
        \end{pmatrix}= \begin{pmatrix}
          0 \\A_c^n B_cu
        \end{pmatrix}= A_c^n B_cu \\ A^nBu&=
         \begin{pmatrix}
          A_5 & 0 \\
          0 & A_s
        \end{pmatrix}^n   \begin{pmatrix}
          0 \\ B_s
        \end{pmatrix} =\begin{pmatrix}
          0 \\A_s^n B_su
        \end{pmatrix}=A_s^n B_su,
\end{align*} and \eqref{contser} holds. By Neumann series, $$(I-zA)^{-1}B=\sum_{n=0}^\infty z^nA^nB$$ holds for all $z$ in a sufficiently small symmetric neighbourhood of the origin, and \eqref{contser1} follows now from \eqref{contser}. The equalities \eqref{obsser} and \eqref{obsser1} can be deduced similarly.

Since the orthocomplements $(\cX^o)^\perp$ and $(\cX^c)^\perp$  are Hilbert subspaces, it follows from  \cite[Lemma 3.1]{Saprikin1} that  $\overline{P_{\cX^o}\cX^c}$ and $ \overline{P_{\cX^c}\cX^o}$ are regular subspaces, and $$\cX^o \cap (P_{\cX^o}\cX^c)^\perp=\cX^o \cap (\cX^c)^\perp, \qquad \cX^c \cap (P_{\cX^c}\cX^o)^\perp=\cX^c \cap (\cX^o)^\perp.$$
Since  $ ({\cX^o})^\perp \subset(P_{\cX^o}\cX^c)^\perp , ({\cX^c})^\perp \subset(P_{\cX^c}\cX^o)^\perp $ and all the spaces are regular, simple calculations show that  \begin{align*} (P_{\cX^o}\cX^c)^\perp &=  (\cX^o)^\perp  \oplus (\cX^o \cap (P_{\cX^o}\cX^c)^\perp ), \\   (P_{\cX^c}\cX^o)^\perp &=  (\cX^c)^\perp  \oplus (\cX^c \cap (P_{\cX^c}\cX^o)^\perp ).   \end{align*}
 Therefore,
  \begin{align*}  \cX&=P_{\cX^o}\cX^c   \oplus (P_{\cX^o}\cX^c)^\perp = (\cX^o)^\perp \oplus \overline{P_{\cX^o}\cX^c} \oplus (\cX^o \cap (P_{\cX^o}\cX^c)^\perp )\\
  &=  (\cX^o)^\perp \oplus \overline{P_{\cX^o}\cX^c} \oplus (\cX^o \cap (\cX^c)^\perp) ,  \end{align*}  and similarly,
  $ \cX=(\cX^c \cap (\cX^o)^\perp)    \oplus \overline{P_{\cX^c}\cX^o} \oplus  (\cX^c)^\perp. $
 Since $(\cX^o \cap (\cX^c)^\perp$  and $\cX^c \cap (\cX^o)^\perp$ are also Hilbert spaces, the spaces $ \overline{P_{\cX^o}\cX^c}$ and $ \overline{P_{\cX^c}\cX^o}$ are Pontryagin spaces with the negative index $\kappa.$
 By considering the properties in \eqref{perties},   the representations \eqref{rep-fmini} and \eqref{rep-smini} follow now easily. That is, $\Sigma'$ and $\Sigma''$ are restrictions of $\Sigma$, and therefore passive.

 Denote $\cX':= \overline{P_{\cX^o}\cX^c} .$ Represent the system operator $T$ of $\Sigma$ as in \eqref{rep-fmini}. Then
 \begin{align*}
  P_{\cX'}A^nB = P_{\cX'}   \begin{pmatrix}A_{11}' & A_{12}' & A_{13}'\\
                                                                     0 & A' & A_{23}'     \\
                                                                     0 & 0 & A_{33}' \end{pmatrix}^n \begin{pmatrix}   B_1' \\                     B'   \\ 0   \end{pmatrix} = \begin{pmatrix}   0 \\       A'^n               B'   \\ 0   \end{pmatrix} = A'^n               B',
 \end{align*}
 and similarly $A'^{*n}C^{'*}  =P_{\cX'}A^{*n}C^*.$ Therefore,
\begin{align*}
{\cX'}^{c} &=\cspan\{\ran A'^{n}B':\,n=0,1,\ldots\} =\cspan\{\ran P_{\cX'}A^nB:\,n=0,1,\ldots\} \\
&=\overline{P_{\cX'}\cspan\{\ran A^nB:\,n=0,1,\ldots\}}=\overline{P_{\cX'}\cX^c}=\overline{P_{\cX'}P_{\cX^o}\cX^c}=\overline{P_{\cX'}\cX'}\\&=\cX',\end{align*}
 and similarly ${\cX'}^{o}=P_{\cX'}\cX^o=\cX'$, which implies that $\Sigma'$ is minimal. A similar argument shows that $\Sigma''$ is minimal, and the proof is complete.
 %  \cite[Lemma 3.1]{Saprikin1}
\end{proof}
Note that in particular, Lemma \ref{repres} implies the existence of a minimal passive realization of $\theta \in \SK.$

\begin{definition}\label{restricts}
The restrictions $\Sigma_o,\Sigma_c,\Sigma_s,\Sigma',$ and $\Sigma''$ in Lemma  \ref{repres}  are called, respectively, the observable,  the controllable, the simple (or proper), the \textbf{first minimal} and the \textbf{second minimal} restrictions of $\Sigma.$ \end{definition}

The first minimal and the second minimal restrictions will be considered later in Sections \ref{sec-opmin} and \ref{sec7}.
%\begin{proposition}
 %5 Transfer function $\theta$ of the passive system $\sys$ belongs to the class $\SK$ if and only if $(\cX^c)^\perp, (\cX^o)^\perp$ and %$(\cX^s)^\perp$ are Hilbert subspaces.
%\end{proposition}
%\begin{proof}
%  It suffices to prove only other direction. Suppose that $(\cX^o)^\perp$ is a Hilbert subspace. Let $\widehat{\Sigma}=(\widehat{A},\widehat{B},\widehat{C},D;\widehat{\cX},\cU,\cY;\kappa)$ be a conservative dilation of the $\Sigma$ as in Proposition \ref{consdil}. It suffices to show that $(\widehat{\cX}^o)^\perp$ is a Hilbert space. Let $x \in (\widehat{\cX}^o)^\perp. $ It follows from \eqref{obs1} that $\widehat{C} \widehat{A}^n x=0$ for all $n\in \dN_0=\{0,1,2,\ldots\}.$ By using \eqref{dilation}, an easy calculation shows that
%  $$  \widehat{C} \widehat{A}^n =\begin{pmatrix}
 %                                  0 & CA^n & F_n
  %                               \end{pmatrix} : \begin{pmatrix} \ell_-^2 \\ \cX \\ \ell_+^2\end{pmatrix}\to \cY   $$ where $F_n$ is an operator depending on $n.$ Then $CA^n
   %                              P_\cX x=0$
%\end{proof}

%It is also true that transfer function of passive system is generalized Schur function,
% but index may be smaller than negative index of the state space \cite{Saprikin1}. In the case when negative index of the state space of passive %system and it's transfer function coincides, spaces $(\cX^c)^\perp,
%(\cX^o)^\perp,$ and $
%(\cX^{s})^\perp$ are Hilbert subspaces \cite{Saprikin1}.

Two  realizations $$\Sigma_1=(A_1,B_1,C_1,D_1;\cX_1,\cU,\cY;\kappa_1), \qquad\Sigma_2=(A_2,B_2,C_2,D_2;\cX_2,\cU,\cY;\kappa_2)$$
 of the same function $\theta \in \SK$ are  called \textbf{unitarily similar} if $D_1=D_2$ and there exists a unitary operator $U: \cX_1 \to \cX_2 $
  such that  \begin{equation}\label{unisim}  A_1=U^{-1}A_2U, \quad B_1=U^{-1}B_2, \quad C_1=C_2U.     \end{equation} In that case, it easily follows that $\kappa_1=\kappa_2.$ Unitary similarity preserves dynamical properties of the system and also the spectral properties of the main operator.
  If two realizations of $\theta \in \SK$ both have the same property {\rm(i), (ii)} or { \rm(iii)} of Theorem \ref{realz},
   then they are unitarily similar \cite[Theorem 2.1.3]{ADRS}.

   The realizations $\Sigma_1$ and $\Sigma_2$ above are said to be \textbf{weakly similar} if  $D_1=D_2$
  and there exists an injective closed densely defined possible unbounded linear operator $Z: \cX_1 \to \cX_2$ with the dense range such that
 \begin{equation}\label{weaksim}
ZA_1x=A_2Zx, \quad C_1x=C_2Zx, \quad  x\in\cD(Z),\quad  \mbox{and} \quad
ZB_1=B_2,      \end{equation} where $\cD(Z)$ is the domain of $Z.$
In Hilbert state space case,
    a result of Helton \cite{Helton} and Arov \cite{A} states that two minimal passive realizations of
    $\theta \in \So$ are weakly similar. However, weak similarity preserves neither
     dynamical properties of the system nor the spectral properties of its main operator.

       Helton's and Arov's statement holds also in case where all the spaces are indefinite. This result is stated for reference purposes.
       Similar argument as Hilbert space case can be applied, % word by word,
        definiteness of the inner product play no role. For a proof of special cases, see  \cite[Theorem 7.1.3]{BGKR}, \cite[p. 702]{Staffans} and \cite[Theorem 2.5]{Lassi}. Note that the  realizations are not assumed to be $\kappa$-admissible or passive.
\begin{proposition}\label{weaksim1}
  Two minimal  realizations of $\theta \in \SK$ are weakly similar.
\end{proposition}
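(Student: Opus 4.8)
The plan is to recover the classical Helton--Arov argument, checking at each step that only the structural identities of the transfer function are used, and not positivity of the inner products. Let $\Sigma_1=(A_1,B_1,C_1,D;\cX_1,\cU,\cY;\kappa)$ and $\Sigma_2=(A_2,B_2,C_2,D;\cX_2,\cU,\cY;\kappa)$ be two minimal realizations of $\theta\in\SK$. First I would fix a symmetric neighbourhood $\Omega$ of the origin on which $(I-zA_j)^{-1}$ exists for $j=1,2$, and introduce the ``reachability'' and ``observability'' maps on finitely supported sequences from $\cU$ and $\cY$ respectively. Concretely, for a finite family $\{z_i\}\subset\Omega$ and vectors $u_i\in\cU$, the identity of transfer functions forces
\begin{equation}\label{prop-key}
\sum_i \langle C_1(I-z_iA_1)^{-1}B_1 u_i,\, y\rangle_{\cY}=\sum_i \langle C_2(I-z_iA_2)^{-1}B_2 u_i,\, y\rangle_{\cY}
\end{equation}
for all $y\in\cY$, and more generally, differentiating in $z$, the same holds with $C_j(I-z_iA_j)^{-1}A_j^{m}$ on both sides. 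This is the seed of the similarity: it says the map sending $(I-z_iA_1)^{-1}B_1u_i$ to $(I-z_iA_2)^{-1}B_2u_i$ is well defined on the controllable subspace of $\cX_1$, which by minimality is all of $\cX_1$ in a dense sense.

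Next I would define $Z$ on the dense linear manifold $\cD_0:=\mathrm{span}\{(I-zA_1)^{-1}B_1u:z\in\Omega,\ u\in\cU\}\subset\cX_1$ by
\begin{equation*}
Z\Big(\sum_i (I-z_iA_1)^{-1}B_1u_i\Big):=\sum_i (I-z_iA_2)^{-1}B_2u_i,
\end{equation*}
using \eqref{prop-key} (and its differentiated forms, pairing against $\ran(I-wA_2^*)^{-1}C_2^*$, which is dense in $\cX_2$ by observability of $\Sigma_2$) to check that $Z$ is well defined and has dense range. Symmetrically, interchanging the roles of $\Sigma_1$ and $\Sigma_2$ gives a densely defined map $W:\cX_2\to\cX_1$ which is an inverse of $Z$ in the sense that $WZ$ and $ZW$ agree with the identity on the respective dense manifolds; this yields injectivity of $Z$. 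The intertwining relations in \eqref{weaksim} are then immediate from the resolvent identity $A_1(I-z A_1)^{-1}B_1=z^{-1}\big((I-zA_1)^{-1}-I\big)B_1$: applying $Z$ commutes with this algebraic manipulation, giving $ZA_1=A_2Z$ on $\cD(Z)$; the relations $ZB_1=B_2$ (take $z=0$) and $C_1x=C_2Zx$ (pair the defining formula against the identity and use $C_1(I-zA_1)^{-1}B_1=C_2(I-zA_2)^{-1}B_2$, the off-constant part of $\theta$) follow the same way. Finally, closability: one shows the graph of $Z$ has trivial ``multivalued part,'' i.e. if $x_n\to 0$ in $\cX_1$ and $Zx_n\to y$ in $\cX_2$ then $y=0$, by pairing $Zx_n$ against vectors $(I-wA_2^*)^{-1}C_2^*g$ and transporting the pairing back to $\cX_1$ via the adjoint intertwining relations; replacing $Z$ by its closure does not affect \eqref{weaksim}.

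I do not expect any single step to be a serious obstacle, precisely because the statement is flagged in the text as ``stated for reference purposes'' with ``definiteness of the inner product plays no role.'' The one place that genuinely requires the Pontryagin (rather than Hilbert) structure is the verification that $Z$ is \emph{closable} and that $\cD(Z)$, $\ran Z$ are dense: in the indefinite setting one must be slightly careful that the spanning families \eqref{cont2}, \eqref{obs2} are dense in the strong topology of a fixed fundamental decomposition, which is exactly what minimality provides via \eqref{cont1}--\eqref{obs1}. The bookkeeping with the differentiated identities (matching not just $\theta$ but all ``moments'' $C_jA_j^mB_j$ and the resolvent-weighted versions) is routine but is the technical heart; everything else is the resolvent identity and duality. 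Since a full account already exists in the quoted references \cite[Theorem 7.1.3]{BGKR}, \cite[Theorem 2.5]{Lassi}, I would keep the exposition brief and simply indicate that the Hilbert-space proof transfers verbatim once one reads ``closed densely defined'' relative to a fundamental decomposition of the state spaces.
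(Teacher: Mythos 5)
Your proposal is correct and follows essentially the same route as the paper, which itself gives no proof but defers to the classical Helton--Arov argument (via \cite{BGKR}, \cite{Staffans}, \cite{Lassi}) with the remark that definiteness of the inner product plays no role; you have reconstructed exactly that argument, with the only indefinite-specific point being that nondegeneracy of the Pontryagin inner product plus density of the observable span replaces the Hilbert-space pairing. The remaining bookkeeping you flag (working with the moment span $\mathrm{span}\{A_1^nB_1u\}$ rather than the resolvent span to keep $A_1\cD(Z)\subset\cD(Z)$, and deducing injectivity of the closure from $C_1A_1^nx=C_2A_2^nZx=0$ and observability) is routine and does not constitute a gap.
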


          \section{Optimal minimal  systems}\label{sec-opmin}
          For $\kappa$-admissible realizations of $\theta \in \SK$, where $\cU$ and $\cY$ are Pontryagin spaces with the same negative index, one can form the similar theory of optimal minimal passive systems as represented  in the standard Hilbert space case in \cite{ArKaaP} and the Pontryagin state space case in \cite{Saprikin1}.
          Techniques, definitions and notations to be used here are similar to what appears in those papers.

 Denote  $E_{\cX}\left( x \right) =\langle x,x \rangle_{\cX}$ for a vector $x$ in an inner product space $\cX.$ The same notation has been used in the proof of Proposition \ref{consdil}. Following \cite{ArKaaP,SapAr,Saprikin1},
 a passive realization $\Sigma=(A,B,C,D;\cX,\cU,\cY;\kappa)$ of $\theta \in \SK$ is called \textbf{optimal} if for any passive realization $\Sigma'=(A',B',C',D';\cX',\cU,\cY;\kappa)$ of $\theta,$ the inequality
\begin{equation}\label{optimal}
  E_{\cX}\left( \sum_{k=0}^{n}A^kBu_k \right) \leq    E_{\cX'}\left( \sum_{k=0}^{n}A'^kB'u_k  \right), \qquad n \in \dN_0, \quad u_k \in \cU,
\end{equation}
holds. On the other hand, the  system $\Sigma$  is called \textbf{*-optimal} if it is observable and
\begin{equation}\label{*-optimal}
  E_{\cX}\left( \sum_{k=0}^{n}A^kBu_k \right) \geq    E_{\cX'}\left( \sum_{k=0}^{n}A'^kB'u_k  \right), \qquad n \in \dN_0, \quad u_k \in \cU,
\end{equation} holds for every observable passive realization $\Sigma'$ of $\theta.$ The requirement for observability must be included for avoiding trivialities, since otherwise every isometric realization of $\theta$ would be $^*$-optimal; see Lemma \ref{canassume} below and \cite[Proposition 3.5 and example on page 144]{ArKaaP}.

In the definition of optimality, the requirement that the considered realizations are $\kappa$-admissible is essential, as the example below shows.
\begin{example}
  Let $$\sys, \quad\Sigma'=(A',B',C',D';\cX',\cU,\cY;\kappa'),$$ where $\kappa<\kappa',$ be passive realization of $\theta \in \SK.$ Suppose that \eqref{optimal} holds. By Lemma \ref{repres}, if \eqref{optimal} holds for $\Sigma$, it holds also for the controllable restriction  $\Sigma_c=(A_c,B_c,C_c,D';\cX^c,\cU,\cY;\kappa)$  of $\Sigma.$ For any vector $x$ of the form $$x=\sum_{n=0}^{M}A_c^nB_cu_n, \qquad \{u_n\}\subset \cU,  \qquad M \in \dN_0, $$ define
  $$   Rx= \sum_{n=0}^{M}A'^nB'u_n.  $$ It is easy to deduce that $R$ is a linear relation. Moreover, since $\Sigma_c$ is controllable by Lemma \ref{repres}, $R$ is densely defined. Since \eqref{optimal} holds, $R$ is contractive. It follows now from \cite[Theorem 1.4.2]{ADRS} that $R$ can be extended to be everywhere defined contractive linear operator. Since $\mathrm{ind}_- \cX^c=\kappa < \kappa'=\mathrm{ind}_- \cX',$ it follows from \cite[Theorem 2.4]{rovdrit} that linear operator from $\cX^c$ to $\cX'$ cannot be contractive, and hence \eqref{optimal} cannot hold.
\end{example}

%Propositions \ref{obspre} and \ref{contpre} require that considered systems are observable co-isometric or controllable isometric. If systems are %minimal, conditions related to co-isometricity or isometricity   can be relaxed.

%Apart from the last assertion, results of Theorem \ref{optimals} was obtained in \cite{Saprikin1}. Last assertion can be proved by using unitary %similarity and similar techniques as in proof of Lemma \ref{canreps}.
%\section{Section to prove Theorem \ref{optimals}}
It will be shown in Theorem \ref{optimals} below that an optimal ($^*$-optimal) minimal realization exists, and it can be constructed by taking the first (second) minimal restriction, introduced in Definition \ref{restricts}, of simple conservative realizations. More lemmas will be needed before that.

                     \begin{lemma} \label{properres}
                         Let $\sys$ is a passive realization of $\theta \in \SK,$ and  let $\Sigma_s=(A_s,B_s,C_s,D;\cX^s,\cU,\cY;\kappa)$  be the restriction of $\Sigma$ to the simple subspace. Then, the first (second) minimal restrictions of $\Sigma$ and $\Sigma_s$ coinside.
                     \end{lemma}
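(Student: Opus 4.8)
The statement is that passing to the simple restriction $\Sigma_s$ and then forming its first (second) minimal restriction gives the same system as forming the first (second) minimal restriction of $\Sigma$ directly. The natural strategy is to compare the two state spaces and verify that the defining data coincide. First I would record the key observation, already essentially contained in Lemma \ref{repres} (cf. \eqref{contser}--\eqref{obsser1}): for all $n \in \dN_0$ one has $A^nB = A_s^nB_s$ and $A^{*n}C^* = A_s^{*n}C_s^*$, where the right-hand sides are read inside $\cX^s$. Consequently
\begin{equation*}
(\cX^s)^c = \cX^c, \qquad (\cX^s)^o = \cX^o, \qquad (\cX^s)^s = \cX^s,
\end{equation*}
i.e. the controllable and observable subspaces of $\Sigma_s$ are literally the same subspaces of $\cX$ as those of $\Sigma$ (they are automatically contained in $\cX^s$ by \eqref{perties}). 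This is the substantive point; everything else is bookkeeping.

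**Main steps.** Granting the above, the first minimal restriction of $\Sigma_s$ lives on $\overline{P_{(\cX^s)^o}(\cX^s)^c} = \overline{P_{\cX^o}\cX^c}$, which is exactly the state space of the first minimal restriction $\Sigma'$ of $\Sigma$ from \eqref{res-fmini}; similarly $\overline{P_{(\cX^s)^c}(\cX^s)^o} = \overline{P_{\cX^c}\cX^o}$ for the second minimal restriction. It remains to check that the system operators agree, i.e. that compressing $T$ to $\overline{P_{\cX^o}\cX^c}$ directly yields the same operators as first compressing $T$ to $\cX^s$ and then to $\overline{P_{\cX^o}\cX^c}$. This follows from the regularity of $\cX^s$ and the transitivity of orthogonal compressions: since $\overline{P_{\cX^o}\cX^c} \subset \cX^o \subset \cX^s$ (all regular subspaces, by Proposition \ref{OnHilb} and Lemma \ref{repres}), one has $P_{\overline{P_{\cX^o}\cX^c}} = P_{\overline{P_{\cX^o}\cX^c}} P_{\cX^s}$ on $\cX$, and hence for the main operators
\begin{equation*}
P_{\cX'}\,(P_{\cX^s} A \uphar_{\cX^s})\uphar_{\cX'} = P_{\cX'} A \uphar_{\cX'},
\end{equation*}
and likewise $P_{\cX'}(P_{\cX^s}B) = P_{\cX'}B$ and $(C\uphar_{\cX^s})\uphar_{\cX'} = C\uphar_{\cX'}$ (using $\ran B \subset \cX^c \subset \cX^s$ and $\cX' \subset \cX^s$). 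The feedthrough operator $D$ is unchanged in all restrictions. Thus the two first minimal restrictions are identical as systems; the argument for the second minimal restrictions is the transpose/dual of this one, using $\overline{P_{\cX^c}\cX^o} \subset \cX^c \subset \cX^s$.

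**Expected obstacle.** The only place needing care is the identification $(\cX^s)^c = \cX^c$ and $(\cX^s)^o = \cX^o$ as subspaces of $\cX$ (not merely as abstract spaces): one must invoke \eqref{contser}--\eqref{obsser1} to see that the spanning families defining the controllable/observable subspaces of $\Sigma_s$ are the very same vectors as for $\Sigma$, so that no closure or projection subtlety intervenes. Once this is in place, the coincidence of the state spaces and of all four block operators is a routine consequence of the transitivity of orthogonal projections onto nested regular subspaces, and the proof is complete.
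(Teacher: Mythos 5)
Your proposal is correct and follows essentially the same route as the paper's own proof: both identify the controllable and observable subspaces of $\Sigma$ and $\Sigma_s$ as literally the same subspaces of $\cX$ via the identities $A^nB=A_s^nB_s$ and $A^{*n}C^*=A_s^{*n}C_s^*$ from Lemma \ref{repres}, conclude that $\overline{P_{\cX^o}\cX^c}$ (respectively $\overline{P_{\cX^c}\cX^o}$) is contained in $\cX^s$, and then check that the compressed block operators agree by transitivity of orthogonal projections onto nested regular subspaces. The only cosmetic difference is that the paper writes out the second minimal restriction and leaves the first to symmetry, while you do the reverse.
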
 \begin{proof} Only the proof of the statement conserning about the second minimal restrictions is provided, since the other case is similar. To make notation less cumbersome, write $\cX^s=\cX_p,$ where $p$ refers to proper part. By Lemma \ref{repres}, the equalities \eqref{contser} and \eqref{obsser} hold, and it easily follows that
                          \begin{align*} \cX^o &= \cX_p^o,&   \cX^c &= \cX_p^c  \\
                          (\cX^o)^\perp &= (\cX^s)^\perp \oplus (\cX_p^o)^\perp,& (\cX^c)^\perp  &=  (\cX^s)^\perp \oplus(\cX_p^c)^\perp,   \end{align*}    where orthogonal complements  $ (\cX_p^o)^\perp$ and  $(\cX_p^c)^\perp$ are taken with respect to the space $\cX_p.$ Therefore,
                          $$ P_{\cX^c}\cX^o =  P_{ \cX_p^c}\cX_p^o      \subset \cX^s=\cX_p,      $$ and consequently,
                          \begin{align*}P_{ P_{\cX_p^c}\cX_p^o }A_p\uphar_{P_{\cX_p^c}\cX_p^o} &=  P_{ P_{\cX^c}\cX^o }A\uphar_{\cX^s}  \uphar_{P_{\cX^c}\cX^o}   = P_{ P_{\cX^c}\cX^o }A  \uphar_{P_{\cX^c}\cX^o},  \\
                                                                                                P_{ P_{\cX_p^c}\cX_p^o }B_p  &= P_{ P_{\cX^c}\cX^o }B, \\C_p\uphar_{{ P_{\cX_p^c}\cX_p^o }  } &=C  \uphar_{ { P_{\cX^c}\cX^o }},\end{align*} which shows that the second minimal restrictions of $\Sigma$ and $\Sigma_s$ co-inside.
                          \end{proof}
                          To prove the ($^*$-)optimality of a system, the following lemma is helpful. % there is no need to check the inequality \eqref{optimal} for an arbitrary passive system, as the next lemma shows. %   a system $\Sigma$ is optimal, if the inequality \eqref{optimal} holds for every minimal passive realization $\Sigma',$ so
                          \begin{lemma}\label{canassume}
                         % \begin{enumerate}
                          %  \item[\textrm{(i)}]
                             Let \begin{align*} {\Sigma}=({A},{B},{C},D;{\cX},\cU,\cY,&\kappa), \qquad   \widehat{\Sigma}=(\widehat{A},\widehat{B},\widehat{C},D;\widehat{\cX},\cU,\cY,\kappa),  \\ \Sigma'=(&A',B',C',D;\cX',\cU,\cY;\kappa), \end{align*} be realizations of $\theta \in \SK$ such that $\Sigma$ is  passive, $\widehat{\Sigma}$ is a passive dilation of $\Sigma$ and $\Sigma'$ is the first minimal restriction of $\widehat{\Sigma}.$
                            Then
                            \begin{align}\label{dil-re-op}
  E_{\cX'}\left(    \sum_{k=0}^{n}A'^kB'u_k    \right)                        \leq  E_{\cX}\left( \sum_{k=0}^{n}A^kBu_k     \right) ,\qquad n \in \dN_0, \quad u_k \in \cU.
                            \end{align}           Moreover, for any isometric realization  $    {\Sigma_1}=({A_1},{B_1},{C_1},D;{\cX_1},\cU,\cY,\kappa)$ of $\theta,$ it holds
                             \begin{align}\label{iso-star}
      E_{\cX}\left( \sum_{k=0}^{n}A^kBu_k     \right)  \leq    E_{\cX_1}\left( \sum_{k=0}^{n}A_1^kB_1u_k \right) ,\qquad n \in \dN_0, \quad u_k \in \cU.
                            \end{align}
                         %   \item[\textrm{(ii)}] If $    {\Sigma_1}=({A_1},{B_1},{C_1},D;{\cX_1},\cU,\cY,\kappa)$ and  ${\Sigma_2}=({A_2},{B_2},{C_2},D;{\cX_2},\cU,\cY,\kappa)      $ are isometric reali
                          %  \end{enumerate}
                          \end{lemma}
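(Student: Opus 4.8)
The plan is to reduce both inequalities to the single basic fact that taking a restriction of a passive system cannot increase the norm of a controllable vector, together with the representations collected in Lemma \ref{repres}. First I would treat \eqref{dil-re-op}. Write the system operator $T_{\widehat\Sigma}$ of the dilation $\widehat\Sigma$ in the block form \eqref{dilatio-blok} relative to $\widehat{\cX}=\cD\oplus\cX\oplus\cD_*$. Because $\widehat C\cD=\{0\}$, $\widehat B^*\cD_*=\{0\}$, $\widehat A\cD\subset\cD$ and $\widehat A^*\cD_*\subset\cD_*$, a direct induction shows that $P_{\cX}\widehat A^{\,k}\widehat B=A^kB$ and hence $P_{\cX}\bigl(\sum_{k=0}^n\widehat A^{\,k}\widehat B u_k\bigr)=\sum_{k=0}^nA^kBu_k$ for all $n\in\dN_0$ and $u_k\in\cU$; in fact the whole $\cD_*$-component of $\sum_k\widehat A^{\,k}\widehat Bu_k$ vanishes, so that vector lies in $\cD\oplus\cX$. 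Since $\widehat\Sigma$ is passive its system operator is a contraction, so the controllable subspace argument (equivalently, reading off that the energy balance is non-increasing) gives
\[
 E_{\widehat{\cX}}\Bigl(\sum_{k=0}^n\widehat A^{\,k}\widehat Bu_k\Bigr)\le E_{\cX}\Bigl(\sum_{k=0}^nA^kBu_k\Bigr),
\]
because the $\cD$-part carries a nonnegative (Hilbert) contribution while the $\cX$-part is exactly $\sum_kA^kBu_k$ and the $\cD_*$-part is zero. Finally, $\Sigma'$ is the first minimal restriction of $\widehat\Sigma$, so by Lemma \ref{repres} (the representation \eqref{rep-fmini} and the identity $P_{\cX'}\widehat A^{\,k}\widehat B=A'^{\,k}B'$) one has $\sum_{k=0}^nA'^{\,k}B'u_k=P_{\cX'}\bigl(\sum_{k=0}^n\widehat A^{\,k}\widehat Bu_k\bigr)$ with $\cX'$ a regular (indeed, here Hilbert-orthocomplemented) subspace of $\widehat{\cX}$, and the orthogonal projection onto a regular subspace of a Pontryagin space does not increase the square norm of a vector lying in $\cX'\oplus(\text{Hilbert complement})$. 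Chaining the two estimates yields \eqref{dil-re-op}.

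For \eqref{iso-star} I would use Proposition \ref{consdil} in its isometric form: an arbitrary passive $\Sigma$ has an isometric dilation $\widehat\Sigma$ of the shape \eqref{dilatio-blok} with $\cD_*=\{0\}$, i.e. $\widehat{\cX}=\cD\oplus\cX$ with $\cD$ Hilbert, $\widehat A\cD\subset\cD$, $\widehat C\cD=\{0\}$. As above, $P_{\cX}\widehat A^{\,k}\widehat B=A^kB$ and the $\cD$-component of $\sum_k\widehat A^{\,k}\widehat Bu_k$ is a Hilbert vector, so
\[
 E_{\cX}\Bigl(\sum_{k=0}^nA^kBu_k\Bigr)\le E_{\widehat{\cX}}\Bigl(\sum_{k=0}^n\widehat A^{\,k}\widehat Bu_k\Bigr).
\]
Now $\widehat\Sigma$ and $\Sigma_1$ are both isometric (hence controllable-isometric after restricting to the controllable subspace, by Theorem \ref{realz} and Lemma \ref{repres}) realizations of the same $\theta$, and by Theorem \ref{realz}(ii) together with \cite[Theorem 2.1.3]{ADRS} their controllable restrictions are unitarily similar; unitary similarity preserves the energy $E(\sum_kA^kBu_k)$, and passing to the controllable restriction does not change $\sum_kA^kBu_k$ by \eqref{contser}. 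Hence $E_{\widehat{\cX}}(\sum_k\widehat A^{\,k}\widehat Bu_k)=E_{\cX_1}(\sum_kA_1^{\,k}B_1u_k)$, and combining with the previous display gives \eqref{iso-star}.

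The routine bookkeeping — the inductions $P_{\cX}\widehat A^{\,k}\widehat B=A^kB$ and $P_{\cX'}\widehat A^{\,k}\widehat B=A'^{\,k}B'$, and the remark that orthogonal projection onto a regular subspace is a contraction on the relevant vectors — I would compress to one line each, citing \eqref{dilatio-blok}, \eqref{rep-fmini} and \eqref{contser}. The one point that needs slight care, and which I view as the main obstacle, is the claim that in a contractive (resp. isometric) system the $\cD$-components genuinely contribute a \emph{nonnegative} amount: this is exactly where one uses that $\cD$ and $\cD_*$ are \emph{Hilbert} spaces in the dilation, so that $E_{\cD}(\cdot)\ge 0$ and the direct sum $\widehat{\cX}=\cD\oplus\cX\oplus\cD_*$ is orthogonal with respect to the indefinite inner product; without the Hilbert hypothesis on $\cD,\cD_*$ the sign of the $\cD$-term would be uncontrolled and the inequalities could fail. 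Once that is in place, everything else is a concatenation of the two elementary monotonicity facts (restriction decreases energy, unitary similarity preserves it).
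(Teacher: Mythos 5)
Your treatment of \eqref{iso-star} is correct and essentially coincides with the paper's: pass to an isometric dilation with $\cD_*=\{0\}$, use that the discarded $\cD$-component is Hilbert to get $E_{\cX}\le E_{\widehat{\cX}}$, and identify $E_{\widehat{\cX}}$ with $E_{\cX_1}$ through the unitarily similar controllable isometric restrictions.

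The proof of \eqref{dil-re-op}, however, has a genuine gap. The middle inequality
\[
E_{\widehat{\cX}}\Bigl(\sum_{k=0}^{n}\widehat{A}^{k}\widehat{B}u_k\Bigr)\le E_{\cX}\Bigl(\sum_{k=0}^{n}A^{k}Bu_k\Bigr)
\]
is false in general; the justification you give for it (the $\cD$-part of $\sum_k\widehat{A}^{k}\widehat{B}u_k$ contributes a \emph{nonnegative} amount, the $\cX$-part equals $\sum_kA^kBu_k$, the $\cD_*$-part vanishes) proves exactly the reverse inequality $E_{\widehat{\cX}}\ge E_{\cX}$ --- the same monotonicity you correctly use in the second half. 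Passivity of $T_{\widehat{\Sigma}}$ does not rescue this: it relates input and output energies along trajectories, not the state energies of a dilation and of its restriction. Since both $E_{\cX'}$ and $E_{\cX}$ are dominated by $E_{\widehat{\cX}}$, chaining through $E_{\widehat{\cX}}$ cannot compare them. The paper instead compares the two \emph{deficits} directly: with $v=\sum_{k}\widehat{A}^{k}\widehat{B}u_k$ and the decomposition $\widehat{\cX}=\cX_1\oplus\cX'\oplus\cX_3$, $\cX_1=(\widehat{\cX}^o)^\perp$, $\cX_3=\widehat{\cX}^o\cap(\widehat{\cX}^c)^\perp$ (both Hilbert), one has
\[
E_{\cX'}\Bigl(\sum_{k=0}^{n}A'^{k}B'u_k\Bigr)=E_{\widehat{\cX}}(v)-E_{\widehat{\cX}}(P_{\cX_1}v),\qquad
E_{\cX}\Bigl(\sum_{k=0}^{n}A^{k}Bu_k\Bigr)=E_{\widehat{\cX}}(v)-E_{\widehat{\cX}}(P_{\cD}v),
\]
and the key point, absent from your proposal, is the containment $\cD\subset(\widehat{\cX}^o)^\perp=\cX_1$ (a consequence of $\widehat{C}\cD=\{0\}$ and $\widehat{A}\cD\subset\cD$ via \eqref{obs1}); since $\cX_1$ is a Hilbert subspace this gives $E_{\widehat{\cX}}(P_{\cD}v)\le E_{\widehat{\cX}}(P_{\cX_1}v)$ and hence \eqref{dil-re-op}. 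Without this comparison of the two discarded Hilbert components the first inequality of the lemma does not follow from your argument.
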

Note that Proposition \ref{consdil} quarantees the existence of a passive dilation $\widehat{{\Sigma}}$ of $\Sigma$ with the properties discribed above.
                          \begin{proof}
                            Since $\widehat{\Sigma}$ is a dilation of $\Sigma,$ % such that the negative indices of the corresponding state spaces coincide
                             the system operator   $T_{\widehat{\Sigma}}$ has a representation
                            \begin{equation}\label{dils} T_{\widehat{\Sigma}}\! =\!\begin{pmatrix}
                                                       \widehat{A} & \widehat{B} \\
                                                       \widehat{C} & D
                                                     \end{pmatrix}\!=\!\begin{pmatrix}
                                                                     \begin{pmatrix}A_{11} & A_{12} & A_{13}\\
                                                                     0 & A & A_{23}     \\
                                                                     0 & 0 & A_{33} \end{pmatrix} & \begin{pmatrix}   B_1 \\                     B   \\ 0   \end{pmatrix} \\
                                                                    \begin{pmatrix} 0 & C_1 & C \end{pmatrix} & D
                                                                   \end{pmatrix} \!:\!\begin{pmatrix}
                                                                                    \begin{pmatrix}
                                                                                      \cD \\
                                                                                      \cX \\
                                                                                      \cD_*
                                                                                    \end{pmatrix} \\
                                                                                    \cU
                                                                                  \end{pmatrix} \!\to\! \begin{pmatrix}
                                                                                    \begin{pmatrix}
                                                                                     \cD \\
                                                                                      \cX \\
                                                                                      \cD_*
                                                                                    \end{pmatrix} \\
                                                                                    \cY
                                                                                  \end{pmatrix},   \end{equation} where $\cD$ and $\cD^*$ are Hilbert spaces. On the other hand,  by Lemma \ref{repres},
 $\widehat{\Sigma}$ can also be represented as
  $$ T_{\widehat{\Sigma}} % =\begin{pmatrix}
                                                       %\widehat{A} & \widehat{B} \\
                                                      % \widehat{C} & D
                                                     %\end{pmatrix}
                                                     =\begin{pmatrix}
                                                                     \begin{pmatrix}A_{11}' & A_{12}' & A_{13}'\\
                                                                     0 & A' & A_{23}'     \\
                                                                     0 & 0 & A_{33}' \end{pmatrix} & \begin{pmatrix}   B_1' \\                     B'   \\ 0   \end{pmatrix} \\
                                                                    \begin{pmatrix} 0 &C' &   C_1' \end{pmatrix} & D
                                                                   \end{pmatrix} :\begin{pmatrix}
                                                                                    \begin{pmatrix}
                                                                                      \cX_1 \\
                                                                                      \cX' \\
                                                                                      \cX_3
                                                                                    \end{pmatrix} \\
                                                                                    \cU
                                                                                  \end{pmatrix} \to \begin{pmatrix}
                                                                                    \begin{pmatrix}
                                                                                     \cX_1 \\
                                                                                      \cX' \\
                                                                                     \cX_3
                                                                                    \end{pmatrix} \\
                                                                                    \cY
                                                                                  \end{pmatrix},   $$ where $\cX_1=(\widehat{\cX}^o)^\perp, \cX'= \overline{P_{\widehat{\cX}^o}\widehat{\cX}^c}$ and $\cX_3=\widehat{\cX}^o \cap(\widehat{\cX}^c)^\perp.$ The spaces $\cX_1$ and $\cX_3$ are Hilbert spaces, and $\cX'$ is a Pontryagin space with the negative index $\kappa.$ Let $n \in \dN_0$ and $\{u_k \}_{k=0}^n \subset \cU.$ Since $\cX_3 \subset (\widehat{\cX}^c)^\perp,$ it holds
                                                                                  \begin{align}\label{eqrep1}
                                                                                   E_{\cX'}\left(    \sum_{k=0}^{n}A'^kB'u_k    \right) &=    E_{\widehat{\cX}}\left(P_{\cX'}    \sum_{k=0}^{n}\widehat{A}^k\widehat{B}u_k    \right) \notag \\
                                                                                   &= E_{\widehat{\cX}}\left(   \sum_{k=0}^{n}\widehat{A}^k\widehat{B}u_k    \right) - E_{\widehat{\cX}}\left(P_{\cX_1}    \sum_{k=0}^{n}\widehat{A}^k\widehat{B}u_k    \right)\notag \\& \quad
                                                                                    - E_{\widehat{\cX}}\left(P_{\cX_3}    \sum_{k=0}^{n}\widehat{A}^k\widehat{B}u_k    \right)\notag \\
                                                                                    &= E_{\widehat{\cX}}\left(   \sum_{k=0}^{n}\widehat{A}^k\widehat{B}u_k    \right) - E_{\widehat{\cX}}\left(P_{\cX_1}    \sum_{k=0}^{n}\widehat{A}^k\widehat{B}u_k    \right).
                                                                                  \end{align}
                                                                                  With $\cD$ and $\cD^*$ as in \eqref{dils}, the identities in \eqref{dilation} hold.
                                                                                 %one has  $\widehat{B}^* \cD_*=\{0\}$, $\widehat{C} \cD=\{0\}$ , $\widehat{A} \cD \subset \cD $ and
                                                                                 % $\widehat{A}^* \cD_* \subset \cD_*. $
                                                                                  Therefore, it follows from the identities \eqref{cont1}  and \eqref{obs1} that   $\cD_* \subset (\widehat{\cX}^c)^\perp $ and $\cD \subset
                                                                                   (\widehat{\cX}^o)^\perp = \cX_1. $  A similar calculation as above yields then
                                                                                  \begin{align}\label{eqrep2}
                                                                                     E_{\cX}\left(    \sum_{k=0}^{n}A^kBu_k    \right)=E_{\widehat{\cX}}\left(   \sum_{k=0}^{n}\widehat{A}^k\widehat{B}u_k    \right) - E_{\widehat{\cX}}\left(P_{\cD}    \sum_{k=0}^{n}\widehat{A}^k\widehat{B}u_k    \right).
                                                                                  \end{align} % By Proposition \ref{OnHilb}, the space %$\cD$ and
                                                                                 %  $ (\widehat{\cX}^o)^\perp$
                                                                                   Since $\cX_1$ is  a Hilbert space, %and therefore
                                                                                    the inclusion $\cD \subset  %(\widehat{\cX}^o)^\perp=
                                                                                    \cX_1 $ implies
                                                                                  $$E_{\widehat{\cX}}\left(P_{\cD}    \sum_{k=0}^{n}\widehat{A}^k\widehat{B}u_k    \right)\leq         E_{\widehat{\cX}}\left(P_{\cX_1}    \sum_{k=0}^{n}\widehat{A}^k\widehat{B}u_k    \right). $$ It follows now from the equations \eqref{eqrep1} and \eqref{eqrep2} that
                                                                              $$E_{\cX'}\left(    \sum_{k=0}^{n}A'^kB'u_k    \right)  \leq      E_{\cX}\left(    \sum_{k=0}^{n}A^kBu_k    \right),   $$ and the inequality \eqref{dil-re-op} is proved.

                                                                              Assume  that $\widehat{{\Sigma}}$ is isometric. Since $\cD$ is a Hilbert space,  it follows from \eqref{eqrep2} that
                                                                                \begin{align} \label{iso-star1}
                                                                                     E_{\cX}\left(    \sum_{k=0}^{n}A^kBu_k    \right)\leq E_{\widehat{\cX}}\left(   \sum_{k=0}^{n}\widehat{A}^k\widehat{B}u_k    \right) .
                                                                                  \end{align}
                                                                                  By Lemma \ref{repres}, the system operator of $\widehat{\Sigma}$ can be represented as
                                                                                   $$ T_{\widehat{\Sigma}} = \begin{pmatrix}
        \begin{pmatrix}
          A_3 & 0 \\
          A_4 & A_c
        \end{pmatrix} &  \begin{pmatrix}
          0 \\ B_c
        \end{pmatrix}\\
        \begin{pmatrix}
          C_1 & C_c
        \end{pmatrix} & D
      \end{pmatrix}: \begin{pmatrix}
                                                                                                                             \begin{pmatrix}
                                                                                                                               (\widehat{\cX}^c)^\perp \\
                                                                                                                               \widehat{\cX}^c
                                                                                                                             \end{pmatrix} \\
                                                                                                                             \cU
                                                                                                                           \end{pmatrix} \to \begin{pmatrix}
                                                                                                                             \begin{pmatrix}
                                                                                                                               (\widehat{\cX}^c)^\perp \\
                                                                                                                               \widehat{\cX}^c
                                                                                                                             \end{pmatrix} \\
                                                                                                                             \cY
                                                                                                                           \end{pmatrix},     $$ where the restriction ${\Sigma_c}=({A_c},{B_c},{C_c},D;{\widehat{\cX}^c},\cU,\cY,\kappa)$ is controllable isometric, and for every $n=0,1,2,\ldots,$ it holds
                                                                                                                           $  \widehat{A}^n\widehat{B}=A_c^nB_c. $ Therefore
                                                                                                                           \begin{align}\label{iso-star2}
                                                                                      E_{\widehat{\cX}}\left(   \sum_{k=0}^{n}\widehat{A}^k\widehat{B}u_k    \right) = E_{\widehat{\cX}^c}\left(   \sum_{k=0}^{n}{A_c}^k{B_c}u_k    \right) .
                                                                                  \end{align} Similar argument show that if  $    {\Sigma_1^c}=({A_1^c},{B_1^c},{C_1^c},D;{\cX_1^c},\cU,\cY,\kappa)$ is the restriction of the isometric system  $    {\Sigma_1}=({A_1},{B_1},{C_1},D;{\cX_1},\cU,\cY,\kappa)$ to the controllable subspace $\cX_1^c,$   then $\Sigma_1^c$ is controllable isometric and it holds
                                                                                  \begin{align}\label{iso-star3}
                                                                                      E_{{\cX_1}}\left(   \sum_{k=0}^{n}{A_1}^k{B_1}u_k    \right) = E_{{\cX_1^c}}\left(   \sum_{k=0}^{n}{A_1^c}^k{B_1^c}u_k    \right).
                                                                                               \end{align}                            But ${\Sigma_0}$ and ${\Sigma_1^c}$ are unitarily similar, and therefore
                                                                                                 \begin{align}\label{iso-star4}
                                                                                      E_{\widehat{\cX}^c}\left(   \sum_{k=0}^{n}{A_0}^k{B_0}u_k    \right)  = E_{{\cX_1^c}}\left(   \sum_{k=0}^{n}{A_1^c}^k{B_1^c}u_k    \right).
                                                                                               \end{align}  By combining \eqref{iso-star1}--\eqref{iso-star4}, the inequality \eqref{iso-star} follows. \end{proof}

\begin{remark}
  \label{iso-star-on}It follows from the inequality \eqref{iso-star} of    Lemma \ref{canassume}  that if there exists an observable isometric realization of $\theta \in \SK,$ then it is $^*$-optimal.
\end{remark}

                In the standard Hilbert space case,  results of Arov \cite{Arov} show that there exist   %first and second restrictions of conservative systems are respectively
                optimal minimal realizations of a Schur function. % and $^*$-optimal minimal.
                The construction was based on the existence of the defect functions, see Section \ref{sec7}.
                 Arov et. all provided new geometric proofs of these results in \cite{ArKaaP}. Saprikin used those new proofs and generalized Arov's results to Pontryagin state space case in \cite{Saprikin1}. It will be proved next that Arov's results holds in the case when all spaces are Pontryagin spaces. The geometric proofs in \cite{ArKaaP} can still be applied in the precent setting with few appropriate changes.

 %For a simple conservative realization $\Sigma=(A,B,C,D;\cX,\cU,\cY;\kappa)$ of $\theta,$ restriction $\res_{\cX \to \cX^{co}}\Sigma,$ where $\cX^{co} %=P_{\cX^o }\cX^c,$ is called \textbf{the first minimal} restriction of $\Sigma$, and $\res_{\cX \to \cX^{oc}}\Sigma,$ where $\cX^{oc} =P_{\cX^c %}\cX^o,$ is \textbf{the second minimal restriction} of $\Sigma.$

\begin{theorem}\label{optimals}
%Suppose that $\Sigma= (T_{\Sigma};\cX,\cU,\cY;\kappa)$ is a simple conservative realization of
Let $\theta \in \SK$, where $\cU$ and $\cY$ are Pontryagin spaces with the same negative index. Then:
\begin{itemize}
  \item[{\rm(i)}] The first minimal restriction of a simple conservative realization of $\theta$ is optimal  minimal;
\item[{\rm(ii)}]  The minimal passive system $\Sigma^*$ is optimal if and only if the dual system $\Sigma$ is *-optimal minimal;
\item[{\rm(iii)}]The second minimal restriction of a simple conservative realization of $\theta$ is *-optimal  minimal;
\item[{\rm(iv)}] Optimal (*-optimal) minimal   systems are unique up to unitary similarity, and every optimal (*-optimal) minimal  realization of $\theta$ is the first minimal restriction (second minimal restriction) of some simple conservative realization of $\theta.$
 \end{itemize}
\end{theorem}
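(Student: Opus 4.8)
The plan is to follow the geometric strategy of \cite{ArKaaP}, adapted to the Pontryagin setting, and to exploit Lemma~\ref{repres} and Lemma~\ref{canassume} as the main technical tools. For (i), take a simple conservative realization $\Sigma_0$ of $\theta$ (which exists by Theorem~\ref{realz}(iii)) and let $\Sigma'$ be its first minimal restriction. By Lemma~\ref{repres}, $\Sigma'$ is minimal and passive. To prove optimality, let $\Sigma$ be an arbitrary $\kappa$-admissible passive realization of $\theta$. By Proposition~\ref{consdil} $\Sigma$ has a conservative dilation $\widehat\Sigma$; since a conservative system is in particular simple after passing to its simple restriction, and since by Theorem~\ref{realz} all simple conservative realizations of $\theta$ are unitarily similar, the simple restriction of $\widehat\Sigma$ is unitarily similar to $\Sigma_0$. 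By Lemma~\ref{properres} the first minimal restriction of $\widehat\Sigma$ coincides with that of its simple restriction, hence (up to unitary similarity) with $\Sigma'$. Now apply the inequality \eqref{dil-re-op} of Lemma~\ref{canassume} with $\widehat\Sigma$ the passive dilation of $\Sigma$ and $\Sigma'$ its first minimal restriction: this gives exactly
\begin{equation*}
E_{\cX'}\Bigl(\sum_{k=0}^n A'^kB'u_k\Bigr)\le E_{\cX}\Bigl(\sum_{k=0}^n A^kBu_k\Bigr),\qquad n\in\dN_0,\ u_k\in\cU,
\end{equation*}
which is \eqref{optimal}. Thus $\Sigma'$ is optimal.

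For (ii), I would use the duality $\theta_{\Sigma^*}=\theta_\Sigma^{\#}$ together with the fact (noted after Theorem~\ref{realz}) that controllability of $\Sigma$ is equivalent to observability of $\Sigma^*$, and that the controllability Gram data $\sum A^kBu_k$ of $\Sigma^*$ is, under the indefinite adjoint pairing, the observability data of $\Sigma$. Concretely, $\Sigma^*$ optimal means its controllability operator is minorized in the $E$-sense by that of every passive realization of $\theta^{\#}$; translating through adjoints, this is precisely the statement that the observability operator of $\Sigma$ majorizes that of every observable passive realization of $\theta$, i.e.\ that $\Sigma$ is $^*$-optimal; minimality is self-dual. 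Part (iii) then follows by combining (i) and (ii): the second minimal restriction of a simple conservative realization $\Sigma_0$ of $\theta$ is, by inspecting representations \eqref{rep-fmini}--\eqref{rep-smini}, the dual of the first minimal restriction of the simple conservative realization $\Sigma_0^*$ of $\theta^{\#}$ (note $\Sigma_0^*$ is again simple conservative); by (i) the latter is optimal minimal, so by (ii) the former is $^*$-optimal minimal.

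For (iv), the uniqueness part: if $\Sigma_1,\Sigma_2$ are two optimal minimal realizations of $\theta$, then by Proposition~\ref{weaksim1} they are weakly similar via a closed densely defined injective $Z$ with dense range satisfying \eqref{weaksim}. Optimality of $\Sigma_1$ applied against $\Sigma_2$ and vice versa forces $E_{\cX_1}(\sum A_1^kB_1u_k)=E_{\cX_2}(\sum A_2^kB_2u_k)$ for all finite sums; since by controllability such vectors are dense, $Z$ is isometric on a dense domain, hence extends to a unitary, and \eqref{weaksim} then yields \eqref{unisim}. The ``moreover'' part — that every optimal minimal realization arises as the first minimal restriction of some simple conservative realization — follows by taking $\Sigma$ optimal minimal, forming a simple conservative dilation–and–restriction $\widehat\Sigma$ of it as above, whose first minimal restriction $\Sigma'$ is optimal minimal by (i); then $\Sigma$ and $\Sigma'$ are both optimal minimal, hence unitarily similar by the uniqueness just proved, and transporting the conservative realization of which $\Sigma'$ is the first minimal restriction by that unitary exhibits $\Sigma$ in the same way. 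The $^*$-optimal statements are dual.

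The main obstacle I anticipate is bookkeeping in part (i): one must be careful that the simple restriction of the conservative dilation $\widehat\Sigma$ really is unitarily similar to the \emph{chosen} simple conservative realization $\Sigma_0$ (so that their first minimal restrictions agree), and that Lemma~\ref{canassume} applies verbatim even though $\widehat\Sigma$ need not be $\kappa$-admissible a priori — here one invokes that a conservative realization of $\theta\in\SK$ automatically has state space of negative index $\ge\kappa$, and that the simple restriction brings it down to exactly $\kappa$ by Proposition~\ref{OnHilb}. The subtlety in part (iv) is purely that weak similarity gives an unbounded $Z$; promoting the $E$-equality to unitarity of $\bar Z$ requires knowing $Z$ is bounded with bounded inverse on the controllable cores, which the equality of quadratic forms supplies. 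None of the steps should require Hardy space or Hankel operator machinery.
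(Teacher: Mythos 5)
Your treatments of (i), (iii) and (iv) follow essentially the same route as the paper: (i) reduces via Lemma~\ref{canassume} and Lemma~\ref{properres} to the unitary similarity of first minimal restrictions of simple conservative realizations; (iii) dualizes (i) through (ii); and in (iv) the uniqueness argument (you promote the weak similarity $Z$ of Proposition~\ref{weaksim1} to a unitary using the two-sided optimality, whereas the paper builds the unitary directly on the controllability vectors and closes it via \cite[Theorem 1.4.2]{ADRS}) and the ``moreover'' argument (transporting the ambient simple conservative system by the unitary) match the paper's proof in substance.

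Part (ii), however, contains a genuine gap. You assert that optimality of $\Sigma^*$ ``translates through adjoints'' into $^*$-optimality of $\Sigma$, and in doing so you describe $^*$-optimality as a statement about the \emph{observability} data of $\Sigma$. It is not: by \eqref{*-optimal}, $^*$-optimality of $\Sigma$ is the requirement that the controllability quantities $E_{\cX}\bigl(\sum_{k}A^kBu_k\bigr)$ \emph{majorize} those of every observable passive realization, while optimality of $\Sigma^*$ is the requirement that the quantities $E_{\cX}\bigl(\sum_{k}(A^*)^kC^*y_k\bigr)$ (the controllability data of $\Sigma^*$, i.e.\ the observability data of $\Sigma$) be \emph{minorized} by those of every passive realization of $\theta^\#$. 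These concern different Gram data with opposite inequalities, and their equivalence is precisely the content of (ii); it is not a formal dualization. The bridge in the paper's proof is operator-theoretic: the optimality inequality for $\Sigma^*$ defines a densely defined contractive relation $S:\cX'\to\cX$ sending $\sum_k (A'^*)^kC'^*y_k$ to $\sum_k (A^*)^kC^*y_k$ (densely defined because $\Sigma'$ is observable), which closes to an everywhere defined contraction; its adjoint $S^*$ is again a contraction because the state spaces are Pontryagin spaces with the same negative index; the identity $C'A'^mS^*=CA^m$ combined with equality of the Taylor coefficients $CA^{m+k}B=C'A'^{m+k}B'$ and the observability of $\Sigma'$ forces $S^*A^kB=A'^kB'$; and contractivity of $S^*$ then yields \eqref{*-optimal}. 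Without this construction (and its converse, which is analogous), the equivalence in (ii) — and hence your derivation of (iii) — is unsupported.
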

\begin{proof}
  {\rm(i)} Let $\Sigma'=(A',B',C',D;\cX',\cU,\cY;\kappa)$ be the first minimal restriction of a simple conservative realization $\widehat{\Sigma}'=(\widehat{A}',\widehat{B}',\widehat{C}',D;\widehat{\cX}',\cU,\cY;\kappa)$ of $\theta \in \SK.$ Let $\sys$ be the first minimal restriction of some conservative realization of $\theta$ such that its state space has negative index $\kappa.$ To prove that $\Sigma'$ is optimal, Lemma \ref{canassume} shows that it is enough to prove
 \begin{equation}\label{enog} E_{\cX'}\left( \sum_{k=0}^{n}A'^kB'u_k  \right)\leq   E_{\cX}\left( \sum_{k=0}^{n}A^kBu_k \right)   , \qquad n \in \dN_0, \quad u_k \in \cU. \end{equation}
By  Lemma \ref{properres}, it can be assumed that $\Sigma$ is the first minimal restriction of some  simple conservative realization $\widehat{\Sigma}=(\widehat{A},\widehat{B},\widehat{C},D;\widehat{\cX},\cU,\cY;\kappa)$ of $\theta.$ Since  $\widehat{\Sigma}$ and $\widehat{\Sigma}'$ are both simple conservative, they are unitarily similar, so there exists a unitary operator $U: \widehat{\cX} \to \widehat{\cX}'$  such that
 $$  \widehat{A}=U^{-1}\widehat{A}'U, \quad \widehat{B}=U^{-1}\widehat{B}', \quad \widehat{C}=\widehat{C}'U.   $$ Easy calculations shows that $\widehat{\cX}^{'o} =U\widehat{\cX}^{o},$ $\widehat{\cX}^{'c} =U\widehat{\cX}^{c},$ $(\widehat{\cX}^{'o})^\perp =U(\widehat{\cX}^{o})^\perp,$  $(\widehat{\cX}^{'c})^\perp =U(\widehat{\cX}^{c})^\perp$ and  $P_{\widehat{\cX}^{'o}}\widehat{\cX}^{'c} = U  P_{\widehat{\cX}^{o}}\widehat{\cX}^{c}.$
In particular, $$P_{\cX}=P_{P_{\widehat{\cX}^{o}}\widehat{\cX}^{c}}=   U^{-1} P_{P_{\widehat{\cX}^{'o}}\widehat{\cX}^{'c}}U =U^{-1} P_{\cX'} U,$$
 %$$  P_{\widehat{\cX}^{'o}}\widehat{\cX}^{'c} = U  P_{\widehat{\cX}^{o}}\widehat{\cX}^{c}       $$
  which implies
 \begin{align*}
 A&= P_{\cX} \widehat{A} \uphar_{\cX} = U^{-1} P_{\cX'} \widehat{A}'U \uphar_{\cX} =  (U\uphar_{\cX})^{-1} P_{\cX'} \widehat{A}' \uphar_{\cX'} U\uphar_{\cX}= (U\uphar_{\cX})^{-1}  {A}'  U\uphar_{\cX}\\
 B&=(U\uphar_{\cX})^{-1}{B}', \qquad C= C'U\uphar_{\cX}.
 \end{align*} It follows that $\Sigma$ and $\Sigma'$ are unitarily similar and the corresponding   unitary operator is $U_0=U\uphar_{\cX}.$  Then
 \begin{align*} E_{\cX}\left( \sum_{k=0}^{n}A^kBu_k  \right) = E_{\cX}\left(U_0^{-1} \sum_{k=0}^{n}A'^kB'u_k  \right)= E_{\cX'}\left( \sum_{k=0}^{n}A'^kB'u_k  \right),
 \end{align*} Then \eqref{enog} holds, and $\Sigma'$ is an optimal minimal system.

 {\rm(ii)}  Let    $\Sigma^*=(A^*,C^*,B^*,D^*;\cX,\cY,\cU;\kappa)$ be an optimal minimal passive realization of ${\theta}^\# \in \mathbf{S}_{\kappa}(\cY,\cU).$  Then $\sys$ is a minimal passive realization of $\theta \in \SK.$  Consider  an arbitrary observable passive realization  ${\Sigma'}=({A'},{B'},{C'},{D};{\cX'},\cU,\cY;\kappa)$ of $\theta \in \SK.$  Then ${\Sigma}^{'*} = (A^{'*},C^{'*},B^{'*},D^*;\cX',\cY,\cU;\kappa)  $ is a controllable passive realization of ${\theta^\#}.$  For a vector of the form
              $$x'= \sum_{k=0}^{n}{{A^{'*}}}^k{C}^{'*}y_k, \qquad n \in \dN_0, \quad y_k \in \cY,$$
               define $$Sx'= \sum_{k=0}^{n}{(A^*)}^kC^*y_k.$$ Since $\Sigma^{'*}$ is controllable and $\Sigma^{*}$  is optimal, the domain of $S$ is dense, and it holds
              $$E_{\cX}(Sx)= E_{\cX}\left( \sum_{k=0}^{n}{(A^*)}^kC^*y_k \right) \leq    E_{\cX'}\left( \sum_{k=0}^{n}{{A^{'*}}}^k{C}^{'*}y_k  \right)=E_{\cX'}(x).      $$
               That is, $S$ is a contractive linear relation with the dense domain. Then  \cite[Theorem 1.4.4]{ADRS} shows that the closure of $S$, which is still denoted as $S$, is contractive everywhere defined linear operator from $\cX' \to \cX.$
                 Since $\cX'$ and $\cX$ are Pontryagin spaces with the same negative index, $S^*: \cX \to \cX',$ is contractive as well. The transfer functions of the $\Sigma$ and $\Sigma'$ coincide, and therefore $CA^mB=C'{A'}^kB'$ for every $m \in \dN_0.$ By definition,   $S({A^{'*}})^mC^{'*}=({A^{*}})^mC^{*},$  or what is the same thing, $C'{A'}^mS^*=CA^m,$ for every $m \in \dN_0.$ Then also  $$C'{A'}^{m+k}B'=C{A}^{m}{A}^{k}B=C'{A'}^mS^*A^{k}B \quad \text{for}\quad m,k\geq0.$$  This implies ${A'}^{k}B'=S^*A^{k}B$  since the system $\Sigma'$  is observable. It  follows now that
              $$  S^*\left(
              \sum_{k=0}^{n}{A}^kBu_k\right)
               =   \sum_{k=0}^{n}{A'}^kB'u_k.     $$ Therefore,
              $$       E_{\cX'}\left(  \sum_{k=0}^{n}{A'}^kB'u_k \right)  =   E_{\cX'}\left(   S^* \left(\sum_{k=0}^{n}{A}^kBu_k\right) \right) \leq   E_{\cX}\left(   \sum_{k=0}^{n}{A}^kBu_k \right),                   $$ since $S^*$ is contractive. This proves that $\Sigma$ is  $^*$-optimal.

              Suppose then that $\sys$ is minimal passive   $^*$-optimal realization of $\theta \in  \SK.$ Then $\Sigma^*$ is a minimal passive realization of ${\theta}^\# \in \mathrm{\mathbf{S}}_{\kappa}(\cY,\cU).$ To prove the optimality of $\Sigma^*$, it suffices to consider all the minimal passive realizations of ${\theta}^\#$; see Lemma \ref{canassume}. Let $\Sigma^{'*}=(A^{'*},C^{'*},B^{'*},D^{*};\cX',\cY,\cU;\kappa)$ be a minimal passive realization of ${\theta^\#}.$ Then $\Sigma'$ is a minimal passive realization of $\theta. $  Since $\Sigma$ is $^*$-optimal, the inequality
              $$  E_{\cX}\left( \sum_{k=0}^{n}A^kBu_k \right) \geq    E_{\cX'}\left( \sum_{k=0}^{n}A'^kB'u_k  \right), \qquad n \in \dN_0, \quad u_k \in \cU,    $$ holds. Define $Kx=\sum_{k=0}^{n}A'^kB'u_k$ for $ x=\sum_{k=0}^{n}A^kBu_k. $ Using similar techniques as above, $K$ can be extended to be a contractive operator  from $\cX \to \cX'$ such that  $$K^*(A^{'*})^kC^{'*}=(A^{*})^kC^{*}.$$
            Since $K^*$ is contractive,
               $$       E_{\cX}\left(  \sum_{k=0}^{n}{A^{*}}^k C^*y_k \right)  =    E_{\cX}\left( K^* \sum_{k=0}^{n}{A^{'*}}^k C^{'*}y_k \right)  \leq   E_{\cX'}\left(  \sum_{k=0}^{n}{A^{'*}}^k C^{'*}y_k \right),                   $$ for $\{ y_k\}\subset \cY.$ This shows that $\Sigma^*$ is optimal.

                 {\rm(iii)} Let $\Sigma$ be a simple conservative realization of $\theta \in \SK.$ Then $\Sigma^*$ is a simple conservative realization of ${\theta^\#},$ and the first minimal restriction ${\Sigma^*}'$ of $\Sigma^*$ is optimal minimal by the part (i). By using the representations \eqref{rep-fmini} and \eqref{rep-smini} from Lemma \ref{repres},   it is easy to deduce that the dual system of  ${\Sigma^*}'$ is the second minimal restriction $\Sigma''$ of $\Sigma$, and it follows from the part {\rm(ii)} that $\Sigma''$ is $^*$-optimal.

{\rm(iv)} Only the proofs of the claims considering optimal minimal realizations will be given, since the  claims considering $^*$-optimal minimal realizations can be proved  analogously.
Let $$\Sigma_j=(A_j,B_j,C_j,D;\cX_j,\cU,\cY;\kappa), \quad j=1,2,$$ be optimal minimal realizations of $\theta \in \SK.$  In a sufficiently small neighbourhood of the origin, the transfer functions % both
                   $\theta_{\Sigma_1}$ and $\theta_{\Sigma_2}$ of the systems $\Sigma_1$ and $\Sigma_2$
                   have the Neumann series and they coincide,  so $C_1A_1^kB_1=C_2A_2^kB_2$ for $k=0,1,2,\ldots$ Since $\Sigma_1$ is controllable,
                    vectors of the form $$x=\sum_{k=0}^N A_1^k B_1 u_k, \qquad u_k \in \cU,$$ are dense in $\cX_1.$
                     Define \begin{equation}Ux=\sum_{k=0}^N A_2^k B_2 u_k.\label{eks}\end{equation}
                     Because $\Sigma_2$ is controllable as well, vectors of the form $Ux$ are dense in $\cX_2.$
                    %  Suppose that $\sum_{k=0}^N A_1^k B_1 u_k=0.$ Then, for any $n=0,1,2,\ldots ,$
                 % \begin{align*}
                 %   C_2A_2^n\left(  \sum_{k=0}^N A_2^k B_2 u_k \right)= \sum_{k=0}^N C_2A_2^{k+n} B_2 u_k
                 %    =\sum_{k=0}^N C_1A_1^{k+n} B_1 u_k= C_1A_1^n\left(  \sum_{k=0}^N A_1^k B_1 u_k \right)=0.
                  %\end{align*} Since $\Sigma_2$ is observable, it follows from the identity \eqref{obs1} that
                % \begin{equation}
               %$$\bigcap_{n=0 }^\infty \ker  C_2A_2^n =\{0\},$$  %\end{equation}
               %  and then
                %   $\sum_{k=0}^n A_2^k B_2 u_k=0.$  Because $\Sigma_1$ is also observable,
                %  similar argument shows that $\sum_{k=0}^N A_2^k B_2 u_k =0$ implies $\sum_{k=0}^NA_1^k B_1 u_k=0.$ It follows that
                   %$U$ is densely defined injective linear operator with the dense range. Moreover,
                   Since $\Sigma_1$ and $\Sigma_2$ both are optimal realizations,
                   $E_{\cX_1} (x)  =   E_{\cX_1} (Ux),$ and therefore $U$ is an isometric linear relation with the dense domain and the dense range. It follows now from \cite[1.4.2]{ADRS} that the closure of $U$ is a unitary operator, which  is still denoted as $U.$  Then, trivially $B_1=U^{-1}B_2.$ %If the vector $x$ is defined as in \eqref{eks},
                    For vector $x$  in \eqref{eks}, it holds
                   $$UA_1x= U  \sum_{k=0}^N A_1^{k+1} B_1 u_k  =   \sum_{k=0}^N  A_2^{k+1} B_2 u_k =A_2Ux.$$ It follows that  $UA_1x=A_2Ux$ holds in a dense set, and therefore by continuity, everywhere. Thus $A_1=U^{-1}A_2U.$  Moreover,
                   for $k = 0,1,2,\ldots,$ one concludes
                    $$C_1A_1^kB_1=C_2A_2^kB_2 = C_2UA_1^kB_1.  $$ Since $\mathrm{span}_{k \in \dN_0}A_1^k B_1$ is dense in $\cX_1,$ it must be $C_1=C_2U.$ It has been shown that the unitary operator $U$ has all the properties of \eqref{unisim}, and therefore $\Sigma_1$ and $\Sigma_2$ are unitarily similar.

     %It will be proved that an optimal minimal realization of $\theta \in \SK$ is the first restriction of some simple conservative system. Proof of the claim considering $^*$-optimal minimal realization is similar.
     Suppose then that $\sys$ is an optimal minimal realization of $\theta.$ Let $\widehat{\Sigma}_0=(\widehat{A}_0,\widehat{B}_0,\widehat{C}_0,D;\widehat{\cX}_0,\cU,\cY;\kappa)$ be some simple conservative realization of $\theta.$
                Lemma \ref{repres} shows that the system operator of $\widehat{\Sigma}$ can be represented as
                $$ T_{\widehat{\Sigma}_0} =\begin{pmatrix}
                                                                     \begin{pmatrix}A_{11}' & A_{12}' & A_{13}'\\
                                                                     0 & A' & A_{23}'     \\
                                                                     0 & 0 & A_{33}' \end{pmatrix} & \begin{pmatrix}   B_1' \\                     B'   \\ 0   \end{pmatrix} \\
                                                                    \begin{pmatrix} 0 &C' &   C_1' \end{pmatrix} & D
                                                                   \end{pmatrix} :\begin{pmatrix}
                                                                                    \begin{pmatrix}
                                                                                     \cX_1 \\
                                                                                      \cX' \\
                                                                                      \cX_2
                                                                                    \end{pmatrix} \\
                                                                                    \cU
                                                                                  \end{pmatrix} \to \begin{pmatrix}
                                                                                      \begin{pmatrix}
                                                                                     \cX_1 \\
                                                                                      \cX' \\
                                                                                      \cX_2
                                                                                    \end{pmatrix} \\
                                                                                    \cY
                                                                                  \end{pmatrix},   $$ where $ \cX_1=(\widehat{\cX}^o)^\perp, \cX'=\overline{P_{\widehat{\cX}^o}\widehat{\cX}^c}$ and $\cX_2= \widehat{\cX}^o \cap (\widehat{\cX}^c)^\perp.$ Then the system $\Sigma'=(A',B',C',D;\cX',\cU,\cY;\kappa)$ is the first minimal restriction of $\widehat{\Sigma},$ and it follows from  part \textrm{(i)} that $\Sigma'$ is optimal minimal, and moreover, as proved above,  unitarily similar with $\Sigma.$
                                                                                Therefore, there exists a unitary operator $U: \cX \to \cX'$ such that
                                                                                  $A =U^{-1}A'U ,B=U^{-1}B' $ and $C=C'U.$ Define
                                                                                   \begin{align*} T_{\widehat{\Sigma}} &=\begin{pmatrix}
                                                       \widehat{A} & \widehat{B} \\
                                                       \widehat{C} & D
                                                     \end{pmatrix}\\&=\begin{pmatrix}
                                                                     \begin{pmatrix}A_{11}' & A_{12}'U & A_{13}'\\
                                                                     0 & A & U^{-1}A_{23}'     \\
                                                                     0 & 0 & A_{33}' \end{pmatrix} & \begin{pmatrix}   B_1' \\                     B   \\ 0   \end{pmatrix} \\
                                                                    \begin{pmatrix} 0 & C & C_1' \end{pmatrix} & D
                                                                   \end{pmatrix} :\begin{pmatrix}
                                                                                    \begin{pmatrix}
                                                                                      \cX_1 \\
                                                                                      \cX \\
                                                                                      \cX_2
                                                                                    \end{pmatrix} \\
                                                                                    \cU
                                                                                  \end{pmatrix} \to \begin{pmatrix}
                                                                                    \begin{pmatrix}
                                                                                      \cX_1 \\
                                                                                      \cX \\
                                                                                      \cX_2
                                                                                    \end{pmatrix} \\
                                                                                    \cY
                                                                                  \end{pmatrix},   \end{align*} and let $\widehat{\Sigma}$ be the system corresponding the  system operator  $T_{\widehat{\Sigma}} $. Easy calculations show that $\widehat{\Sigma}$ and  $\widehat{\Sigma}_0$ are unitarily similar and $$\widehat{U}=\begin{pmatrix}
                                                                                                                                                                            I & 0 &0 \\
                                                                                                                                                                         0 &    U & 0\\
                                                                                                                                                                          0 &  0 & I
                                                                                                                                                                          \end{pmatrix}:\begin{pmatrix}
                                                                                                                                                                           \cX_1 \\
                                                                                      \cX \\
                                                                                      \cX_2
                                                                                                                                                                          \end{pmatrix} \to \begin{pmatrix}
                                                                                                                                                                           \cX_1 \\
                                                                                      \cX' \\
                                                                                      \cX_2
                                                                                                                                                                          \end{pmatrix}
                                                                                      $$ is the corresponding unitary operator. Therefore $\widehat{\Sigma}$ is a simple conservative system. Now $\widehat{U}$ maps $P_{\cX^o}\cX^c$ to $P_{{\cX'}^o}{\cX'}^c,$ and $\widehat{U}{\cX'}=U{\cX'}=\cX.$ It follows  that $\Sigma$ is the first minimal restriction of $\widehat{\Sigma}.$
\end{proof}

\section{Generalized defect functions} \label{sec7}

If $\cU$ and $\cY$ are Hilbert spaces,   it is well known that $S \in  \mathbf{S}(\cU,\cY)$  is holomorphic in the unit disk and it has
    non-tangential contractive strong limit values almost everywhere (a.e.) on the unit circle $\dT.$ Therefore,  $S$ can be extended to   $L^{\infty}(\cU,\cY)$ function, that is,
    the class of weakly measurable a.e. defined and essentially bounded $\mathcal{L}(\cU,\cY)$-valued functions  on  $\dT.$ Then it follows from  \cite[Theorem V.4.2]{SF} that there exist a Hilbert space $\cK$ and an outer function  $\varphi_S \in \mathbf{S}(\cU,\cK)$ such that \begin{equation}\label{us-def1}\varphi_S^*(\zeta)\varphi_S(\zeta)\leq I-S^*(\zeta)S(\zeta)\end{equation} a.e. on $\dT$, and if a function  $\widehat{\varphi}\in \mathbf{S}(\cU,\widehat{\cK})$, where  $\widehat{\cK}$ is a Hilbert space, has this same property, then   \begin{equation}\label{us-def2}\widehat{\varphi}^*(\zeta)\widehat{\varphi}(\zeta)\leq \varphi_S^*(\zeta)\varphi_S(\zeta)\end{equation} a.e. on $\dT.$ The function $\varphi_S$ is called  the \textbf{right defect function} of $S.$ For the notions of the outer functions, $^*$-outer functions, inner functions and  $^*$-inner functions, see  \cite[Chapter V]{SF}.
From   \cite[Theorem V.4.2]{SF} it is also easy to deduce that there exists a Hilbert space $\cH$ and a $^*$-outer function $\psi_S \in  \mathbf{S}(\cH,\cY)$ such that \begin{equation}\label{us-def3} \psi_S(\zeta) \psi_S^*(\zeta) \leq  I-S(\zeta) S^*(\zeta)\end{equation}  a.e. $\zeta \in\dT$ and if a Schur function $\widehat{\psi} \in \mathbf{S}(\widehat{\cH},\cY)$ has this same property, then \begin{equation}\label{us-def4} \psi_S(\zeta) \psi_S^*(\zeta) \leq  \widehat{\psi}(\zeta) \widehat{\psi}^*(\zeta).\end{equation}
% for every Schur function $\widehat{\psi}$ with a property $ \widehat{\psi}_S(\zeta) \widehat{\psi}_S^*(\zeta) \leq  I-S(\zeta) S^*(\zeta)$ a.e. on $\dT.$
 The function $\psi_S$ is called  the \textbf{left defect function} of $S.$ Both $\varphi_S$ and $\psi_S$ are unique up to a unitary constant.

The theory of the defect functions is  considered, for instance,  in \cite{Boiko97,Boikop1,Boikop2}.  Various connections of defect functions and passive realizations can be found  in \cite{Seppo,ArNu1,ArNu2}. The definition of the defect functions was generalized for  functions $S \in \SK$ in \cite{Lassi} by using the Kre\u{\i}n-Langer factorizations and the fact that all functions  in $\SK$ have also contractive strong limit values a.e. on $\dT.$
If $\cU$ and $\cY$ are Pontryagin spaces such that their negative index  is not zero, the defect functions cannot be defined similarly as in the Hilbert space settings, since $S \in \SK$ may not be extendable to the unit circle. In the Hilbert state space case, Arov and Saprikin showed in \cite{SapAr} that for a function $S=S_rB_r^{-1} \in \SK$, where $S_rB_r^{-1}$ is the right Kre\u{\i}n-Langer factorization of $S$, the existence of the optimal minimal realization of $S$  is connected with the existence of the right defect function of $S_r.$ In general, similar connections exist with  certain functions constructed by embedded systems, and those function are called defect functions; this is the approach taken here.

  Suppose that $\sys$ is a passive realization of $\theta \in \SK.$ Denote the system operator of $\Sigma$ by  $T.$ Theorem \ref{Julia} shows that  there exists a Julia operator of $T.$ By using the same notation as in \eqref{JuliaoperatorT}, one can form the \textbf{Julia embedding} $\widetilde{\Sigma}$ of the system $\Sigma.$ Then the corresponding system operator $\widetilde{T}$ is a Julia operator of $T,$ and it is of the form
   \begin{equation} \label{Juliaemb}T_{\widetilde{\Sigma}} = \begin{pmatrix}
    A & \begin{pmatrix} B & D_{T_{,1}^*} \end{pmatrix} \\
                               \begin{pmatrix}  C \\   D_{T_{,1}}^*  \end{pmatrix} &  \begin{pmatrix}
                                      D &  D_{T_{,2}^*} \\
                                       D_{T_{,2}}^* & -L^*
                                    \end{pmatrix}
                             \end{pmatrix}  : \begin{pmatrix}
                                            \cX \\
                                          \begin{pmatrix}  \cU \\
                                            \sD_{T^*}\end{pmatrix}
                                          \end{pmatrix} \to \begin{pmatrix}
                                            \cX \\\begin{pmatrix}
                                            \cY \\
                                            \sD_{T}\end{pmatrix}
                                          \end{pmatrix},
  \end{equation} where  \begin{equation*} D_{T^*}\! =\! \begin{pmatrix}
                                      D_{T_{,1}^*}\\
                                      D_{T_{,2}^*}
                                    \end{pmatrix}\!,\quad D_{T} \!=\! \begin{pmatrix}
                                      D_{T_{,1}}\\
                                      D_{T_{,2}}
                                    \end{pmatrix}\!,\quad   D_{T^*}D_{T^*}^*\!=\!I_{\cX}-TT^*\!,\quad   D_{T}D_{T}^*\!=\!I_{\cX}-T^*T,  \end{equation*}  such that $D_{T}$ and $D_{T^*}$ have zero kernels. The transfer function of the Julia embedding is
  \begin{equation}\label{embtrans}\begin{split} {\theta_{\widetilde{\Sigma}}}(z)&=
   \begin{pmatrix}
                                                       D + zC(I-zA)^{-1}B & D_{T_{,2}^*}+zC(I-zA)^{-1} D_{T^*_{,1}} \\
                                                          D_{T_{,2}}^* +zD_{T_{,1}}^*(I-zA)^{-1}B &-L^* + zD_{T_{,1}}^*(I-zA)^{-1} D_{T^*_{,1}}
                                                     \end{pmatrix}\\&= \begin{pmatrix}
                                                                                           \theta (z) & \psi(z)\\
                                                                                           \varphi(z) &  \chi(z)
                                                                                         \end{pmatrix}.
  \end{split}\end{equation}  Lemma \ref{ker-es} shows that the identities
\begin{align}
 I- \theta(z) \theta^*(w)&=(1 -z\bar{w}) G(z)G^*(w)  +\psi(z) \psi^*(w),\label{lefdef}\\    I-\theta^* (w) \theta(z)&=(1 -z\bar{w}) F^*(w)F(z)  +\varphi^*(w) \varphi(z)\label{rightdef}
\end{align}
  where $$G(z)=C(I_{\cX}-zA)^{-1}, \quad F(z)=(I_{\cX}-zA)^{-1}B,$$ holds for every $z$ and $w$ in a sufficiently small symmetric neighbourhood of the origin.    If the negative index of $\cU$ and $\cY$ is denoted as $\kappa_1,$ a similar argument as in  the proof of Proposition \ref{maxdim} shows that the kernels $K_{\varphi}(w,z)$  and $K_{\psi}(w,z)$ defined as in \eqref{kernel1} have at most $\kappa + \kappa_1$ negative squares.

  \begin{definition}\label{defidef} Let $\cU$ and $\cY$ be Pontryagin spaces with the same negative index.
    Let $\sys$ be an optimal minimal passive realization of $\theta \in \SK,$ and let  $\widetilde{\Sigma}$ be the Julia embedding of it, represented as in  \eqref{Juliaemb}. % If  $\cU$ and $\cY$ are not Hilbert spaces,
    Then the function $\varphi$ in  \eqref{embtrans} is defined to be the right defect function $\varphi_\theta$ of $\theta.$ %If  $\cU$ and $\cY$ are Hilbert spaces, then the right defect function  $\varphi_\theta$ is defined to be the outer factor  ${\varphi_l}_o$  of  ${\varphi_l}$, where $\varphi = B_l^{-1}{\varphi_l} $  is the left Kre\u{\i}n-Langer factorization of $\varphi$ in \eqref{embtrans}. If $\kappa=0,$ then it is  $B_l\equiv I.$

    Moreover, let  $\sys$ be a $^*$-optimal minimal  passive realization of $\theta \in \SK,$ and let  $\widetilde{\Sigma}$ be the Julia embedding of it, represented as in \eqref{Juliaemb}. %  If  $\cU$ and $\cY$ are not %Hilbert spaces,
     Then the function $\psi$ in \eqref{embtrans} is defined to be the left defect function $\psi_{\theta}$ of $\theta.$ %If  $\cU$ and $\cY$ are Hilbert spaces, then the left defect function  $\psi_{\theta}$ is defined %  to be the $^*$-outer factor  ${\psi_r}_{^*o}$  of  ${\psi_r}$, where $\psi = {\psi_r}B_r^{-1} $  is the right Kre\u{\i}n-Langer factorization of $\psi$ in \eqref{embtrans}.
  \end{definition}
 \begin{remark}\label{remarkdef}
 Since optimal ($^*$-optimal) minimal  realizations are unitarily similar by Theorem \ref{optimals}, and Julia operators for contractive operator are essentially unique by Theorem \ref{Julia}, it can be deduced that the defect functions are essentially uniquely defined by $\theta \in \SK.$  The definition above is also slightly different from the one given in \cite{Lassi} for functions in the class $\SK,$ where  $\cU$ and $\cY$ are Hilbert spaces.
 \end{remark}

The right defect function of $\theta \in \SK$ and the left defect function of $\theta^\#$ are closely related to each other.
\begin{lemma}\label{related}
For  $\theta \in \SK,$  it holds $\varphi_{\theta}^\#=\psi_{\theta^\#}$ and $\psi_\theta^\#=\varphi_{\theta^\#}$
\end{lemma}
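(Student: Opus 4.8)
The plan is to produce both identities from a single passive system together with its dual, and then deduce the second identity from the first. First I would use Theorem~\ref{optimals} to fix an optimal minimal passive realization $\Sigma=(A,B,C,D;\cX,\cU,\cY;\kappa)$ of $\theta$, with system operator $T$. Its dual $\Sigma^*=(A^*,C^*,B^*,D^*;\cX,\cY,\cU;\kappa)$ is a minimal passive realization of $\theta^\#\in\mathbf S_\kappa(\cY,\cU)$, and applying Theorem~\ref{optimals}(ii) to the function $\theta^\#$ and the system $\Sigma^*$ --- whose dual $(\Sigma^*)^*=\Sigma$ is optimal --- shows that $\Sigma^*$ is a $^*$-optimal minimal realization of $\theta^\#$. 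Hence $\varphi_\theta$ may be read off from the Julia embedding $\widetilde\Sigma$ of $\Sigma$ as in \eqref{Juliaemb}--\eqref{embtrans}, and $\psi_{\theta^\#}$ from the Julia embedding of $\Sigma^*$; by Remark~\ref{remarkdef} the choices of realization and of Julia operator do not matter.

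The crucial observation is that the Julia embedding of $\Sigma^*$ may be taken to be the dual $(\widetilde\Sigma)^*$ of the Julia embedding of $\Sigma$. Indeed $T_{\widetilde\Sigma}$ is a Julia operator of $T$, so its adjoint $T_{\widetilde\Sigma}^*$ is unitary, the defect operators retain their zero kernels, and $I-(T^*)^*T^*=I-TT^*$, $I-T^*(T^*)^*=I-T^*T$; thus $T_{\widetilde\Sigma}^*$ is a Julia operator of $T^*$, and reorganised as a system operator with state space $\cX$, incoming space $\cY\oplus\sD_{T}$ and outgoing space $\cU\oplus\sD_{T^*}$ it is precisely a Julia embedding of $\Sigma^*$ in the sense of \eqref{Juliaemb}. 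In particular, the two blocks of the defect operator of $(T^*)^*$ entering that embedding are exactly the blocks $D_{T_{,1}}$ and $D_{T_{,2}}$ appearing in \eqref{Juliaemb}.

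Finally I would compare transfer functions. Since $\theta_{\Pi^*}=\theta_\Pi^\#$ for every system $\Pi$ (see the introduction), the transfer function of $(\widetilde\Sigma)^*$ is $\theta_{\widetilde\Sigma}^\#$; by \eqref{embtrans} and Definition~\ref{defidef} the off-diagonal blocks of $\theta_{\widetilde\Sigma}$ are a function $\psi$ in position $(1,2)$ and $\varphi_\theta$ in position $(2,1)$, so the $(1,2)$-block of $\theta_{\widetilde\Sigma}^\#$ is $\varphi_\theta^\#$. On the other hand, as $\Sigma^*$ is $^*$-optimal minimal, Definition~\ref{defidef} identifies the $(1,2)$-block of the transfer function of its Julia embedding with $\psi_{\theta^\#}$. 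Equating the two yields $\varphi_\theta^\#=\psi_{\theta^\#}$; alternatively one reads off directly from \eqref{gees}--\eqref{efs} that both sides equal $D_{T_{,2}}+zB^*(I_{\cX}-zA^*)^{-1}D_{T_{,1}}$. Applying the identity just proved to $\theta^\#$ gives $\varphi_{\theta^\#}^\#=\psi_{(\theta^\#)^\#}=\psi_\theta$, and taking $\#$ of both sides produces the remaining identity $\psi_\theta^\#=\varphi_{\theta^\#}$. I expect the only delicate point to be the index bookkeeping in the block decompositions of the Julia operators of $T$ and of $T^*$, together with the correct twofold use of Theorem~\ref{optimals}(ii); no analytic difficulties arise.
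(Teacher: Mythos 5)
Your proposal is correct and follows essentially the same route as the paper: fix an optimal minimal realization $\Sigma$ of $\theta$, note via Theorem~\ref{optimals}(ii) that $\Sigma^*$ is a $^*$-optimal minimal realization of $\theta^\#$, observe that $T_{\widetilde\Sigma}^*$ is a Julia operator of $T^*$, and read the identities off the transfer function \eqref{embtrans}. You merely spell out the "calculation" the paper leaves implicit and obtain the second identity by substituting $\theta^\#$ for $\theta$ rather than by running the parenthetical parallel argument; both are fine.
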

\begin{proof}
  Let $\sys$ be an optimal ($^*$-optimal) minimal realization of $\theta.$ Denote the system operator of $\Sigma$ as $T,$ and the Julia operator $T_{\widetilde{\Sigma}}$ of $T$ as in \eqref{Juliaemb}. By Theorem \ref{optimals}, the system $\Sigma^*$ is $^*$-optimal  (optimal) minimal, and a calculation shows that $T_{\widetilde{\Sigma}}^*$ is the Julia operator of $T^*.$ Now the results follow means of \eqref{embtrans}.
\end{proof}

In the Hilbert space settings, $S \in \mathbf{S}(\cU,\cY) $ has factorizations of the form
$$S=S_i S_o= S_{*o}  S_{*i},   $$ where $S_i \in \mathbf{S}(\cY,\cY) $ is inner,  $S_o \in \mathbf{S}(\cU,\cY) $ is outer,  $S_{*o} \in \mathbf{S}(\cU,\cY) $ is $^*$-outer and
$S_{*i} \in \mathbf{S}(\cY,\cY) $ is $^*$-inner  \cite[p. 204]{SF}. The next proposition  shows that for an ordinary Schur function $ \theta \in \mathbf{S}(\cU,\cY), $ the outer factor of $\varphi_\theta$ and the $^*$-outer factor of $\psi_\theta$ defined above coincide essentially with the usual definition of defect functions.
\begin{proposition}\label{larmino}
Let $\theta \in\SK,$ where $\cU$ and $\cY$ are Hilbert spaces. % If $\kappa=0,$ let $\varphi_\theta=\varphi_{\theta_i}\varphi_{\theta_o}$ be  respectively
Then  %$\varphi_\theta \in \mathbf{S}(\cU,\cK)$ such that
 \begin{equation}\label{contrp}\varphi_\theta^*(\zeta)\varphi_\theta(\zeta)\leq I-\theta^*(\zeta)\theta(\zeta)\end{equation} a.e. on $\dT$, and if a generalized Schur function  $\widehat{\varphi}\in \mathbf{S}_{\kappa'}(\cU,\widehat{\cK})$, where  $\widehat{\cK}$ is a Hilbert space and $\kappa'$ does not depend on $\kappa$, has this same property, then  \begin{equation}\label{maximalp}\widehat{\varphi}^*(\zeta)\widehat{\varphi}(\zeta)\leq \varphi_\theta^*(\zeta)\varphi_\theta(\zeta),\end{equation} a.e. on $\dT.$ If $\kappa=0,$ denote the inner and outer factors of $\varphi_\theta$ as  $\varphi_{\theta_i}$ and $\varphi_{\theta_o},$ respectively. Then, $\varphi_{\theta_i}$ is an isometric constant,  and if  $\varphi'$ is  an outer function with properties \eqref{us-def1} and  \eqref{us-def2}, then it holds $U\varphi_{\theta_o}=\varphi',$  where $U$ is a unitary operator.% and $\varphi_{\theta_i}$ in an isometric constant.

 %Also, if $\kappa=0,$ then $ \varphi_\theta$ is an outer function.
 Moreover, %$\psi_S \in  \mathbf{S}(\cH,\cY)$ such that
  $$ \psi_\theta(\zeta) \psi_\theta^*(\zeta) \leq  I-\theta(\zeta) \theta^*(\zeta)$$  a.e. $\zeta \in\dT$ and if a generalized Schur function $\widehat{\psi} \in \mathbf{S}_{\kappa'}(\widehat{\cH},\cY)$,  where  $\widehat{\cK}$ is a Hilbert space and $\kappa'$ does not depend on $\kappa$, has this same property, then $$ \psi_\theta(\zeta) \psi_\theta^*(\zeta) \leq  \widehat{\psi}(\zeta) \widehat{\psi}^*(\zeta)$$ a.e. $\zeta \in\dT. $If $\kappa=0,$ denote the $^*$-inner and $^*$-outer factors of $\psi_\theta$ as  $\psi_{\theta_{^*i}}$ and $\psi_{\theta_{^*o}},$ respectively. Then, $\psi_{\theta_{^*i}}$ is a co-isometric constant,  and if  $\psi'$ is  a $^*$-outer function with properties \eqref{us-def3} and  \eqref{us-def4}, then it holds $\psi_{\theta_{^*o}}U'=\psi',$  where $U'$ is a unitary operator.% and
%Also, if $\kappa=0,$ then  $ \psi_\theta$ is a $^*$-outer function.
\end{proposition}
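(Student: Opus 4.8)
The plan is to prove the assertions about the right defect function $\varphi_\theta$ first, and then to obtain those about the left defect function $\psi_\theta$ from Lemma~\ref{related}, using $\psi_\theta=\varphi_{\theta^\#}^\#$ and $\theta(\zeta)\theta^*(\zeta)=(\theta^\#)^*(\bar\zeta)(\theta^\#)(\bar\zeta)$ for a.e.\ $\zeta\in\dT.$ Fix an optimal minimal passive realization $\sys$ of $\theta$ and its Julia embedding $\widetilde\Sigma,$ so that $\varphi_\theta=\varphi$ and the identity \eqref{rightdef} holds in a symmetric neighbourhood of the origin, with $F(z)=(I-zA)^{-1}B$ and $\varphi$ as in \eqref{efs}. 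To get \eqref{contrp} I would delete the $\sD_{T^*}$-columns of the unitary operator $T_{\widetilde\Sigma}$: the remaining block $\left(\begin{smallmatrix}A&B\\ C&D\\ D_{T_{,1}}^*&D_{T_{,2}}^*\end{smallmatrix}\right)\colon\cX\oplus\cU\to\cX\oplus\cY\oplus\sD_T$ is, by a short computation using $D_TD_T^*=I-T^*T,$ isometric, and its transfer function is $\left(\begin{smallmatrix}\theta\\ \varphi_\theta\end{smallmatrix}\right).$ By Proposition~\ref{maxdim} this column lies in $\mathbf{S}_{\kappa'}(\cU,\cY\oplus\sD_T)$ for some $\kappa'\le\kappa,$ so, $\cU,\cY,\sD_T$ being Hilbert spaces, it has contractive strong boundary values a.e.\ on $\dT,$ i.e.\ $\theta^*(\zeta)\theta(\zeta)+\varphi_\theta^*(\zeta)\varphi_\theta(\zeta)\le I_\cU$ a.e., which is \eqref{contrp}; deleting in addition the $\cY$-row shows likewise that $\varphi_\theta\in\mathbf{S}_{\kappa_0}(\cU,\sD_T)$ with $\kappa_0\le\kappa,$ hence $\varphi_\theta$ has strong radial limits a.e.\ on $\dT.$

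For the maximality \eqref{maximalp} I would first reduce to an ordinary Schur competitor: writing the left Kre\u{\i}n--Langer factorization $\widehat\varphi=\widehat B_\ell^{-1}\widehat\varphi_\ell$ with $\widehat\varphi_\ell\in\mathbf{S}(\cU,\widehat\cK)$ and using that $\widehat B_\ell$ is unitary a.e.\ on $\dT,$ one has $\widehat\varphi^*\widehat\varphi=\widehat\varphi_\ell^*\widehat\varphi_\ell$ a.e., so it suffices to treat $\widehat\varphi_\ell.$ Thus assume $\widehat\varphi\in\mathbf{S}(\cU,\widehat\cK)$ with $\widehat\varphi^*\widehat\varphi\le I-\theta^*\theta$ a.e. Then $G:=\left(\begin{smallmatrix}\theta\\ \widehat\varphi\end{smallmatrix}\right)$ lies in $\mathbf{S}_\kappa(\cU,\cY\oplus\widehat\cK)$ with the \emph{same} index $\kappa$: with the left Kre\u{\i}n--Langer factorization $\theta=B_\ell^{-1}\theta_\ell,$ the function $\left(\begin{smallmatrix}B_\ell&0\\ 0&I\end{smallmatrix}\right)G$ is holomorphic in $\dD$ and, by $\theta^*\theta+\widehat\varphi^*\widehat\varphi\le I$ a.e., contractive a.e.\ on $\dT,$ hence an ordinary Schur function, so $G\in\mathbf{S}_{\kappa''}$ with $\kappa''\le\kappa,$ while $\kappa''\ge\kappa$ since $K_\theta$ is a principal block of $K_G.$ Take a controllable isometric realization $\Sigma_G=(A_G,B_G,\left(\begin{smallmatrix}C_1\\ \widehat C\end{smallmatrix}\right),\left(\begin{smallmatrix}D_1\\ \widehat D\end{smallmatrix}\right);\cX_G,\cU,\cY\oplus\widehat\cK;\kappa)$ of $G$ as in Theorem~\ref{realz}. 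Then $\Sigma_\theta^G=(A_G,B_G,C_1,D_1;\cX_G,\cU,\cY;\kappa)$ is a controllable, passive, $\kappa$-admissible realization of $\theta$ — its system operator is a sub-block of the isometric $T_{\Sigma_G}$ obtained by deleting a row into the Hilbert space $\widehat\cK,$ hence a contraction — and $I-T_{\Sigma_\theta^G}^*T_{\Sigma_\theta^G}=\left(\begin{smallmatrix}\widehat C^*\\ \widehat D^*\end{smallmatrix}\right)\left(\begin{smallmatrix}\widehat C&\widehat D\end{smallmatrix}\right),$ so Lemma~\ref{ker-es} applied to $\Sigma_\theta^G$ (with the defect operator $\left(\begin{smallmatrix}\widehat C^*\\ \widehat D^*\end{smallmatrix}\right)$) yields \eqref{rightdef} with $\widehat\varphi$ and $F_G(z)=(I-zA_G)^{-1}B_G$ in place of $\varphi_\theta$ and $F.$

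Now I would invoke optimality. Since $\Sigma$ is optimal and $\Sigma_\theta^G$ is a $\kappa$-admissible passive realization of $\theta,$ the inequality \eqref{optimal}, the controllability of $\Sigma_\theta^G$ and \cite[Theorem~1.4.4]{ADRS} furnish a contraction $R\colon\cX_G\to\cX$ with $RA_G=AR$ and $RB_G=B,$ hence $F=RF_G$ near the origin, and $I_{\cX_G}-R^*R\ge0.$ Subtracting the two instances of \eqref{rightdef} and using $F^*(w)F(z)=F_G^*(w)R^*RF_G(z)$ gives
\begin{equation*}
\varphi_\theta^*(w)\varphi_\theta(z)-\widehat\varphi^*(w)\widehat\varphi(z)=(1-z\bar w)\,F_G^*(w)\bigl(I_{\cX_G}-R^*R\bigr)F_G(z)
\end{equation*}
near the origin, hence on $(\dD\setminus P)\times(\dD\setminus P)$ by analytic continuation, where $P$ is the finite set of poles in $\dD$ of $\varphi_\theta,$ $F$ and $F_G$ (the main operators in question, being those of minimal realizations of generalized Schur functions, have only finitely many spectral points outside $\overline{\dD}$). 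Taking $w=z=r\zeta$ with $0<r<1$ and $\zeta\in\dT$ whose radius avoids $P,$ the right-hand side is nonnegative because $1-r^2>0,$ $I_{\cX_G}-R^*R\ge0$ and $\cX_G$ is a Pontryagin space, so $\widehat\varphi^*(r\zeta)\widehat\varphi(r\zeta)\le\varphi_\theta^*(r\zeta)\varphi_\theta(r\zeta);$ letting $r\to1^-$ and using the a.e.\ existence of strong radial limits of $\widehat\varphi$ and $\varphi_\theta$ (tested on a countable dense subset of $\cU$) gives \eqref{maximalp}. When $\kappa=0,$ $\varphi_\theta\in\mathbf{S}(\cU,\sD_T),$ and applying \eqref{maximalp} with $\widehat\varphi=\varphi'$ together with \eqref{us-def2} with $\widehat\varphi=\varphi_\theta$ gives $\varphi_\theta^*\varphi_\theta={\varphi'}^*\varphi'$ a.e.; since the controllable isometric realization of $\left(\begin{smallmatrix}\theta\\ \varphi_\theta\end{smallmatrix}\right)$ exhibited above is also observable (as $\Sigma$ is), one concludes as in \cite{Arov,ArKaaP,SapAr} that $\varphi_\theta$ is outer, so its inner factor is an isometric constant and its outer factor coincides with $\varphi'$ up to a unitary constant. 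The statements for $\psi_\theta$ then follow by applying all of the above to $\theta^\#\in\mathbf{S}_\kappa(\cY,\cU).$

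I expect the main obstacle to be the interplay between optimality and the index: \eqref{optimal} only compares $\Sigma$ with $\kappa$-admissible realizations, so a general competitor $\widehat\varphi$ must first be replaced by its Kre\u{\i}n--Langer Schur part precisely so that $\left(\begin{smallmatrix}\theta\\ \widehat\varphi\end{smallmatrix}\right)$ still has index exactly $\kappa;$ and, even then, optimality only yields the inequality near the origin, so the genuinely technical point will be the passage to a.e.\ on $\dT$ (analytic continuation past the finitely many poles followed by the radial-limit argument). The $\kappa=0$ claim that $\varphi_\theta$ is outer is the place where one still has to lean on the classical Hilbert-space theory of defect functions.
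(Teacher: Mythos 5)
Your proposal is correct and follows essentially the same route as the paper's proof: deleting the $\sD_{T^*}$-columns of the Julia operator of an optimal minimal realization to obtain a minimal isometric realization of the column function with components $\theta$ and $\varphi_\theta$ (giving \eqref{contrp}), reducing a competitor $\widehat{\varphi}$ to its Kre\u{\i}n--Langer Schur part, comparing the two instances of \eqref{rightdef} via optimality (giving \eqref{maximalp}), falling back on the classical Sz.-Nagy--Foias/Arov theory for the $\kappa=0$ factorization claims, and dualizing via Lemma \ref{related} for $\psi_\theta$. Your packaging of optimality as an intertwining contraction $R$ and your explicit analytic-continuation-plus-radial-limit passage to the boundary are only minor refinements that make precise what the paper compresses into ``by continuity.''
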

\begin{proof}
%The results of  \cite[Theorem V.4.2]{SF} show that there exists a function $\phi \in  \mathbf{S}(\cU,{\cK}) $ such that  $\phi^*(\zeta)\phi(\zeta)\leq I-\theta^*(\zeta)\theta(\zeta)$ a.e. on $\dT$ with maximality %property in a sense of \eqref{maximalp}.
Let \begin{equation}\label{sig}\sys\end{equation} be an optimal minimal realization of $\theta.$ Denote the system operator of $\Sigma$ as $T,$ the Julia operator $T_{\widetilde{\Sigma}}$ of $T$  as in \eqref{Juliaemb} and the function $\varphi=\varphi_\theta$ as in \eqref{embtrans}. Since $T_{\widetilde{\Sigma}}$ is unitary, the operator \begin{equation}\label{embeiso}
                                                                                                  T_{{\Sigma'}} =  \begin{pmatrix}
                                                                                                    A &  B  \\
                                                                                                       \begin{pmatrix}
                                                                                                                       C \\
                                                                                                                     D_{T_{,1}}^*
                                                                                                                    \end{pmatrix} & \begin{pmatrix}
                                                                                                                     D \\
                                                                                                                      D_{T_{,2}}^*
                                                                                                                    \end{pmatrix}
                                                                                                     \end{pmatrix}: \begin{pmatrix}
                                                                                                                      \cX \\
                                                                                                                      \cU
                                                                                                                    \end{pmatrix}  \to \begin{pmatrix}
                                                                                                                      \cX \\
                                                                                                                      \begin{pmatrix}
                                                                                                   \cY \\
                                                                                                     \sD_{T}
                                                                                                   \end{pmatrix}
                                                                                                                    \end{pmatrix}.
                                                                                                   \end{equation} %That is,
                                                                                                   %\begin{equation}
 %\begin{equation} \begin{pmatrix}
    %A &  B  \\
         %                       C  &  D \\
        %                               D_{T_{,1}}^* &   D_{T_{,2}}^*
       %                      \end{pmatrix}  : \begin{pmatrix}
      %                                      \cX \\
     %                                      \cU \\
    %                                       % \sD_{T^*}
      %                                    \end{pmatrix} \to \begin{pmatrix}
     %                                       \cX \\
    %                                        \cY \\
   %                                         \sD_{T}
  %                                        \end{pmatrix}
 % \end{equation}
  must be isometric, and therefore the system $$\Sigma'=\left(A,B,\begin{pmatrix}
                                                                                                                       C \\
                                                                                                                     D_{T_{,1}}^*
                                                                                                                    \end{pmatrix}, \begin{pmatrix}
                                                                                                                     D \\
                                                                                                                      D_{T_{,2}}^*
                                                                                                                    \end{pmatrix};\cX,\cU, \begin{pmatrix}
                                                                                                   \cY \\
                                                                                                     \sD_{T}
                                                                                                   \end{pmatrix};\kappa\right)$$ is an isometric realization of  the function  { \tiny $\begin{pmatrix}\theta\\\varphi_\theta\end{pmatrix} $ }. Since $\Sigma'$ is an embedding of the minimal system $\Sigma,$ the system  $\Sigma'$ is also minimal. It follows from Theorem \ref{realz} that
                                                                                                  { \tiny $\begin{pmatrix}\theta\\\varphi_\theta\end{pmatrix}$} $\in \mathbf{S}_\kappa\left(\cU,
                                                                                                   \cY \oplus
                                                                                                     \sD_{T}
                                                                                                    \right).$ Since contractive boundary values of  generalized Schur functions exist for a.e. $\zeta \in\dT,$ it holds
                                                                                                   \begin{equation*}
                                                               \begin{pmatrix}\theta^*(\zeta) & \varphi_\theta^*(\zeta)\end{pmatrix}                    \begin{pmatrix}\theta(\zeta)\\\varphi_\theta(\zeta)\end{pmatrix}                  \leq I \qekv \varphi_\theta^*(\zeta)\varphi_\theta(\zeta)\leq I-\theta^*(\zeta)\theta(\zeta)
                                                                                                   \end{equation*} for a.e. $\zeta \in \dT.$

Suppose that a function  $\widehat{\varphi}\in \mathbf{S}_{\kappa'}(\cU,\widehat{\cK})$, where  $\widehat{\cK}$ is a Hilbert space, has the property
 \begin{equation*}
 \widehat{\varphi}^* (\zeta) \widehat{\varphi}(\zeta) \leq I-\theta^* (\zeta) \theta(\zeta)
\end{equation*} for a.e. $\zeta\in \dT.$ Since the function $\widehat{\varphi}$ has the left Kre\u{\i}n--Langer factorization of the form $\widehat{\varphi}= B^{-1}_{\widehat{\varphi}} \widehat{\varphi}_{l} %=B^{-1}_{\widehat{\varphi}} \widehat{\varphi}_{li}\widehat{\varphi}_{lo}
,$ where $\widehat{\varphi}_{l} $ is an ordinary Schur function,    %and $\widehat{\varphi}_{lo} $ are respectively inner and outer factors of $\widehat{\varphi}_{l} $
 it holds
$$   \widehat{\varphi}^* (\zeta) \widehat{\varphi}(\zeta) = \widehat{\varphi}_{l}^* (\zeta) \widehat{\varphi}_{l}(\zeta)    $$ for a.e. $\zeta\in \dT.$
Then the function \begin{equation}\label{u-hat} \breve{\theta} = \begin{pmatrix}
                                                            \theta \\
                                                             \widehat{\varphi}_{l}
                                                          \end{pmatrix},\end{equation} belongs to the Schur class $\mathbf{S}_\kappa\left(
                                                                                                     \cU
                                                                                                   ,
                                                                                                   \cY \oplus
                                                                                                     \widehat{\cK}
                                                                                                   \right),$ and it has a controllable isometric  realization $\breve{{\Sigma}}$ with the system operator
                                                                                                   \begin{equation*}
                                                                                                  T_{\breve{\Sigma}} =  \begin{pmatrix}
                                                                                                       A_1 & B_1 \\
                                                                                                       \begin{pmatrix}
                                                                                                                      C_1 \\
                                                                                                                      C_2
                                                                                                                    \end{pmatrix} & \begin{pmatrix}
                                                                                                                      D_1 \\
                                                                                                                      D_2
                                                                                                                    \end{pmatrix}
                                                                                                     \end{pmatrix}: \begin{pmatrix}
                                                                                                                      \cX_1 \\
                                                                                                                      \cU
                                                                                                                    \end{pmatrix}  \to \begin{pmatrix}
                                                                                                                      \cX_1 \\
                                                                                                                      \begin{pmatrix}
                                                                                                   \cY \\
                                                                                                     \widehat{\cK}
                                                                                                   \end{pmatrix}
                                                                                                                    \end{pmatrix}.
                                                                                                   \end{equation*} That is,
                                                                                                   \begin{align*}
                                                          \breve{\theta}(z)&=\begin{pmatrix}
                                                            \theta(z) \\
                                                             \widehat{\varphi}_{l}(z)                                                          \end{pmatrix}=\begin{pmatrix}
                                                                                                                      D_1 \\
                                                                                                                      D_2
                                                                                                                    \end{pmatrix} + z \begin{pmatrix}
                                                                                                                      C_1 \\
                                                                                                                      C_2
                                                                                                                    \end{pmatrix}(I-zA_1)^{-1}B_1\\&=\begin{pmatrix}
                                                                                                                                                  D_1 + z C_1(I-zA_1)^{-1}B_1 \\
                                                                                                                                                    D_2 + z C_2(I-zA_1)^{-1}B_1
                                                                                                                                                  \end{pmatrix}.
                                                                                                   \end{align*} It follows that \begin{equation}\label{sigma}\Sigma_1=(A_1,B_1,C_1,D_1;\cX_1,\cU,\cY;\kappa)\end{equation} is a realization of $\theta,$ and since ${\breve{\Sigma}}$ is isometric and $\widehat{\cK}$ is a Hilbert space, the system $\Sigma_1$ is  passive. Since $T_{\breve{\Sigma}}$ is isometric, the defect operator $D_{T_{\breve{\Sigma}}}$ of $T_{\breve{\Sigma}}$ is zero, and it follows from Lemma \ref{ker-es} that
                                                                                                     \begin{equation}\label{isom} \begin{split}I-\breve{\theta}^* (z) \breve{\theta}(z)&= I-{\theta}^* (z) {\theta}(z)- \widehat{\varphi}_{l}^* (z)\widehat{\varphi}_{l}(z)\\&=\left(1 -|z|^2\right) B_1^*(I-\overline{z}A_1^*)^{-1})(I-zA_1)^{-1}B_1 \end{split}\end{equation} whenever the expressions are meaningful.    By combining the identities \eqref{rightdef}  and \eqref{isom} for optimal minimal realization $\Sigma,$ one gets
\begin{equation}\begin{split}\label{isom2}
\left(1 -|z|^2\right) &B_1^*(I-\overline{z}A_1^*)^{-1}(I-zA_1)^{-1}B_1+\widehat{\varphi}_{l}^* (z) \widehat{\varphi}_{l}(z)\\
&=\left(1 -|z|^2\right) B^*(I-\overline{z}A^*)^{-1}(I-zA)^{-1}B+{\varphi_\theta}^* (z){\varphi_\theta}(z)
 \end{split}\end{equation}  for every $z$ in a sufficiently small symmetric neighbourhood $\Omega$ of the origin. Since the system $\Sigma$ is optimal, if follows by using Neumann series that
 \begin{align*}
  \left\langle \!B^*(I-\overline{z}A^*)^{-1}\!(I-zA)^{-1}\!Bu,u \!\right\rangle&\!= E_{\cX}\!\!\left( (I-zA)^{-1}Bu  \right) = E_{\cX}\!\!\left(\sum_{n=0}^{\infty} A^{n}Buz^n  \!\!\right)\\
  &\leq  \!E_{\cX_1}\!\!\left(\sum_{n=0}^{\infty} A_1^{n}B_1uz^n\!  \right) \\&= \left\langle B_1^*(I-\overline{z}A_1^*)^{-1}\!(I-zA_1)^{-1}\!B_1u,u \right\rangle
 \end{align*} for every $z \in \Omega$ and for every $u \in \cU.$ Then it follows from \eqref{isom2} that
 $$     \widehat{\varphi}_{l}^* (z) \widehat{\varphi}_{l}(z) \leq {\varphi_\theta}^* (z){\varphi_\theta}(z), \qquad z \in \Omega.  $$ By continuity,
  \begin{equation}\label{mino-om}   \widehat{\varphi}_{l} ^* (\zeta) \widehat{\varphi}_{l}(\zeta)= \widehat{\varphi}^* (\zeta) \widehat{\varphi}(\zeta) \leq  {\varphi_\theta}^* (\zeta){\varphi_\theta}(\zeta)    \end{equation} for a.e. $\zeta \in \dT.$
   %Suppose then that $\kappa=0.$  Let $\varphi \in S(\cU,\cK),$ where $\cK$ is a Hilbert space, be an outer function with properties \eqref{us-def1} and  \eqref{us-def2}. The existence of $\varphi$ is guaranteed by
   %\cite[Theorem V.4.2]{SF}. It can be deduced that
   %$$\varphi_\theta^*(\zeta)   \varphi_\theta(\zeta) =\varphi^*(\zeta)\varphi(\zeta)      $$ for almost all
   % for the case $\kappa=0.
 % Suppose then that $\theta \in \SK,$ where $\kappa>0.$ The
  %The results of  \cite[Theorem V.4.2]{SF}, there exists a function $\phi$

Next suppose  that $\kappa=0.$ By combining \eqref{us-def2} and \eqref{mino-om}, it can be deduced that
$$   {\varphi^{'*}}(\zeta){\varphi'}(\zeta)= {\varphi_\theta}^* (\zeta){\varphi_\theta}(\zeta)= {\varphi_{\theta_o}}^*(\zeta) {\varphi_{\theta_i}}^*(\zeta) {\varphi_{\theta_i}} (\zeta){\varphi_{\theta_o}}(\zeta) ={\varphi_{\theta_o}}^* (\zeta){\varphi_{\theta_o}}(\zeta)$$ for a.e. $\zeta \in \dT.$ Then it follows from \cite[Proposition V.4.1]{SF} that $\varphi'=U\varphi_{\theta_o},$ where $U$ is  a unitary operator. If one puts an outer function $ \widehat{\varphi}_{l}=\varphi_{\theta_{o}}=U^{-1}\varphi'$ in \eqref{u-hat}, the construction of an optimal minimal system used in the proof of \cite[Theorem 7]{Arov}  shows that the associated system $\Sigma_1$ in \eqref{sigma} is optimal. Since $\Sigma$ in \eqref{sig} is also optimal,  for every $z \in \dD,$ it holds
$$   B^*(I-\overline{z}A^*)^{-1}(I-zA)^{-1}B = B_1^*(I-\overline{z}A_1^*)^{-1}(I-zA_1)^{-1}B_1,   $$ and then by \eqref{isom2}
$$\|  {\varphi_{\theta_i}} (z){\varphi_{\theta_o}}(z)u \| = \|  {\varphi_{\theta_o}}(z)u \|
  % {\varphi^{'*}}(z){\varphi'}(z)= {\varphi_\theta}^* (z){\varphi_\theta}(z)= {\varphi_{\theta_o}}^*(z) {\varphi_{\theta_i}}^*(z) {\varphi_{\theta_i}} (z){\varphi_{\theta_o}}(z) ={\varphi_{\theta_o}}^* % (z){\varphi_{\theta_o}}(z)
   $$
    for every $z \in \dD$ and every $u \in \cU.$ The outer function ${\varphi_{\theta_o}}(z)$ has a dense range for every $z \in \dD$  \cite[Proposition V.2.4]{SF}. This implies that $ {\varphi_{\theta_i}} (z)$ is an isometry for every  $z \in \dD,$ and arguing as in the proof of \cite[Proposition V.2.1]{SF} one deduces that $ {\varphi_{\theta_i}} $ is an isometric constant.
The  claims involving $ {\varphi_{\theta}} $ are proved.

  The  claims involving $ {\psi_{\theta}} $ follow now directly by applying Lemma \ref{related}.
\end{proof}
\begin{lemma} \label{suf-opti} Let $$\Sigma_0=(A_0,B_0,C_0,D;\cX_0,\cU,\cY;\kappa), \quad\sys$$  be passive   realizations of $\theta \in \SK$  such that $\Sigma_0$ is  optimal.  If
  for every $z$ and $w$ in a sufficiently small symmetric neighbourhood $\Omega$ of the origin the equality
  \begin{equation} \label{sufnes-opti}
    B^*(I-\overline{w}A^*)^{-1}(I-zA)^{-1}B = B_0^*(I-\overline{w}A_0^*)^{-1}(I-zA_0)^{-1}B_0
  \end{equation}  holds, then $\Sigma$ is optimal. %The necessity can be obtained by using \eqref{lefdef} and \eqref{rightdef} as in the proof of \ref{larmino}, the definiteness of $\cU$ and $\cY$ plays no role.
   \end{lemma}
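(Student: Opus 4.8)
The plan is to reduce the optimality of $\Sigma$ to that of $\Sigma_0$ by showing that the two systems produce \emph{exactly the same} reachability energies, i.e. $E_{\cX}\bigl(\sum_{k=0}^nA^kBu_k\bigr)=E_{\cX_0}\bigl(\sum_{k=0}^nA_0^kB_0u_k\bigr)$ for every $n\in\dN_0$ and every finite family $\{u_k\}_{k=0}^n\subset\cU$. Once this is known, optimality of $\Sigma$ is immediate: for an arbitrary passive realization $\Sigma'=(A',B',C',D';\cX',\cU,\cY;\kappa)$ of $\theta$, optimality of $\Sigma_0$ gives $E_{\cX_0}(\sum_{k=0}^nA_0^kB_0u_k)\le E_{\cX'}(\sum_{k=0}^nA'^kB'u_k)$, and combining with the equality above yields $E_{\cX}(\sum_{k=0}^nA^kBu_k)\le E_{\cX'}(\sum_{k=0}^nA'^kB'u_k)$, which is precisely \eqref{optimal} for $\Sigma$.

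First I would reformulate the hypothesis \eqref{sufnes-opti} in terms of the moments $B^*A^{*m}A^kB$. Since $A$ and $A_0$ are bounded, the Neumann series $(I-zA)^{-1}B=\sum_{k\ge0}z^kA^kB$ and $(I-\overline{w}A^*)^{-1}=\sum_{m\ge0}\overline{w}^mA^{*m}$ converge for $z,w$ in a sufficiently small symmetric neighbourhood of the origin, so the left-hand side of \eqref{sufnes-opti} is jointly holomorphic in $(z,\overline{w})$ there with expansion $\sum_{m,k\ge0}\overline{w}^mz^k\,B^*A^{*m}A^kB$, and likewise for the right-hand side with $A,B$ replaced by $A_0,B_0$. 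Matching Taylor coefficients, \eqref{sufnes-opti} is equivalent to
$$B^*A^{*m}A^kB \;=\; B_0^*A_0^{*m}A_0^kB_0, \qquad m,k\in\dN_0,$$
all adjoints being taken with respect to the indefinite inner products.

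Next, for fixed $n\in\dN_0$ and $\{u_k\}_{k=0}^n\subset\cU$, expanding the indefinite inner product of $\cX$ and using $(A^jB)^*=B^*A^{*j}$ gives
$$E_{\cX}\!\left(\sum_{k=0}^nA^kBu_k\right)=\sum_{j,k=0}^n\langle A^kBu_k,A^jBu_j\rangle_{\cX}=\sum_{j,k=0}^n\langle B^*A^{*j}A^kBu_k,u_j\rangle_{\cU},$$
and the same identity holds for $\Sigma_0$ with $A_0,B_0$ in place of $A,B$. By the moment identity from the previous paragraph the two right-hand sides coincide, hence $E_{\cX}(\sum_{k=0}^nA^kBu_k)=E_{\cX_0}(\sum_{k=0}^nA_0^kB_0u_k)$, and the argument of the first paragraph finishes the proof. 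The only point that requires any care is the passage from the resolvent identity \eqref{sufnes-opti} to the equality of all moments $B^*A^{*m}A^kB$ (joint analyticity and coefficient matching) together with the bookkeeping of the indefinite adjoints; all the substantive content is already packed into the optimality of $\Sigma_0$, whose existence is furnished by Theorem \ref{optimals}.
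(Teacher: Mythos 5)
Your proof is correct, and it takes a genuinely shorter route than the paper's. The paper first passes to the controllable restriction $\Sigma_c$ of $\Sigma$ (via Lemma \ref{repres}), defines a linear relation $R:\sum_j A_c^jB_cu_j\mapsto\sum_j A_0^jB_0u_j$, uses the optimality of $\Sigma_0$ to see that $R$ is a densely defined contraction (so that it extends to an everywhere defined contraction by the extension theorem for contractive relations between Pontryagin spaces of equal negative index, \cite[Theorem 1.4.2]{ADRS}), and only then brings in \eqref{sufnes-opti} to show that $R$ is isometric on the dense span of the vectors $(I-zA_c)^{-1}B_cu$, hence isometric everywhere; the equality of reachability energies, the optimality of $\Sigma_c$, and finally that of $\Sigma$ follow. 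You bypass the operator $R$ entirely: expanding both sides of \eqref{sufnes-opti} in Neumann series and matching Taylor coefficients in $(z,\overline{w})$ gives the moment identities $B^*A^{*m}A^kB=B_0^*A_0^{*m}A_0^kB_0$ for all $m,k\in\dN_0$, from which the equality $E_{\cX}\bigl(\sum_kA^kBu_k\bigr)=E_{\cX_0}\bigl(\sum_kA_0^kB_0u_k\bigr)$ of the Gram sums is immediate, and optimality of $\Sigma$ then follows from that of $\Sigma_0$ in one line. The coefficient matching is legitimate since both sides are jointly holomorphic in $(z,\overline w)$ on a bidisk about the origin with absolutely convergent double expansions. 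Your argument needs neither the controllable restriction nor the extension theorem for contractive relations, and it invokes the optimality of $\Sigma_0$ only once; what the paper's construction buys is the explicit isometry intertwining the controllable part of $\Sigma$ with $\Sigma_0$, which is in the spirit of the unitary-similarity results elsewhere in the paper but is not needed for this lemma.
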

\begin{proof} It follows from  Lemma \ref{repres} that %Proposition \ref{OnHilb} and identity \eqref{cont1} that
 the system operator $T_\Sigma$ of $\Sigma$ can be represented as in \eqref{rep-contro}, the restriction $\Sigma_c=(A_c,B_c,C_c,D;\cX^c,\cU,\cY;\kappa)$ of $\Sigma$ to the controllable subspace $\cX^c$ is controllable passive, and \eqref{contser} and \eqref{contser1} hold.
%\begin{equation*}
%T_\Sigma=\begin{pmatrix}
 %          \begin{pmatrix}
  %           A_{11} & 0 \\
   %          A_{21} & A_1
    %       \end{pmatrix} & \begin{pmatrix}
     %                        0 \\
      %                       B_1
       %                    \end{pmatrix} \\
        %   \begin{pmatrix}
         %    C_{2} & C_1
      %     \end{pmatrix} & D
      %   \end{pmatrix} :  \begin{pmatrix}
       %                     \begin{pmatrix}(\cX^c)^\perp\\ \cX^c\end{pmatrix} \\
        %                   \cU
         %                \end{pmatrix} \to \begin{pmatrix}
          %                  \begin{pmatrix}(\cX^c)^\perp\\ \cX^c\end{pmatrix} \\
           %                \cY
            %             \end{pmatrix}.
%\end{equation*}
% Easy calculations show that the restriction $\Sigma_1=(A_1,B_1,C_1,D;\cX^c,\cU,\cY;\kappa)$ of $\Sigma$ is a controllable passive realization of $\theta,$ and moreover, for every $z$ in a sufficiently small symmetric neighbourhood $\Omega$ of the origin and for every $n=0,1,2,\ldots,$ it holds
%\begin{equation}\label{cont-same}  (I-zA)^{-1}B =(I-zA_1)^{-1}B_1, \qquad A^nB=A_1^nB_1.          \end{equation}% Then, if \eqref{sufnes-opti} holds for the systems $\Sigma_1$ and $\Sigma_0,$ it holds for the systems $\Sigma$ and $\Sigma_0.$ Also, if $\Sigma_1$ %is optimal, then so is $\Sigma.$

Let \begin{equation}\label{ex} x=\sum_{j=1}^M A_c^j B_cu_j, \quad M \in \dN, \quad \{u_j\}_{j=1}^M \subset \cU.\end{equation} %The sufficiently follows from analytical properties of $(I-zA)^{-1}B.$ %In the Hilbert state space settings, the proof is given in \cite{Arov}, and
  For the vectors of the form \eqref{ex}, define
  \begin{equation}\label{Rex} Rx=\sum_{j=1}^M A_0^j B_0u_j.\end{equation}
 Since  $\Sigma_c$ is controllable,  the domain of $R$ is dense. %in $\cX^c.$
  Moreover, $\Sigma_0$ is optimal, and therefore
 $ E_{\cX_0} \left( Rx \right) \leq   E_{\cX^c} \left( x \right)   .  $ That is, $R$ is contractive, and it follows from \cite[Theorem 1.4.2]{ADRS} that the closure of $R$ is everywhere defined contractive linear operator. It is still denoted by $R.$ Since
 $$ (I-zA_c)^{-1}B_c=\sum_{n=0}^\infty z^nA_c^nB_c,   \qquad      (I-zA_0)^{-1}B_0=\sum_{n=0}^\infty z^nA_0^nB_0,   $$ holds  for every $z$ in a sufficiently small symmetric neighbourhood $\Omega$ of the origin,  it follows by continuity that
 $$  R\left((I-zA_c)^{-1}B_cu\right)  =(I-zA_0)^{-1}B_0u $$ for every $z\in \Omega$ and $u\in \cU.$ Then
 $$  R\left(  \sum_{j=1}^M (I-z_jA_c)^{-1}B_cu_j    \right)          =         \sum_{j=1}^M (I_{\cX_0}-z_jA_0)^{-1}B_0u_j ,  $$ for all $ M \in \dN,  \{z_j \}_{j=1}^M \subset \Omega,  $ and $   \{u_j \}_{j=1}^M \subset \cU.$
 Equalities \eqref{contser1} and  \eqref{sufnes-opti} imply now
 \begin{equation*}\begin{split}
 &E_{\cX^c} \left(    \sum_{j=1}^M (I-z_jA_c)^{-1}B_cu_j       \right)  \\&\quad=\sum_{j=1}^M \sum_{k=1}^M \left\langle   B_c^* (I-\overline{z_k}A_c^*)^{-1}(I-z_jA_c)^{-1}B_cu_j   ,u_k \right\rangle_{\cU} \\
 &\quad=\sum_{j=1}^M \sum_{k=1}^M \left\langle   B_0^* (I-\overline{z_k}A_0^*)^{-1}(I-z_jA_0)^{-1}B_0u_j   ,u_k \right\rangle_{\cU} \\
 &\quad=E_{\cX_0} \left(    \sum_{j=1}^M (I-z_jA_0)^{-1}B_0u_j       \right)\\&\quad= E_{\cX_0} \left( R\left(   \sum_{j=1}^M (I-z_jA_c)^{-1}B_cu_j    \right)     \right).
\end{split}.\end{equation*}  This implies that $R$ is isometric in $\mathrm{span}\{\ran (I-zA_1)^{-1}B_1, z \in \Omega\},$ which is a dense set, since $\Sigma_1$ is controllable. Since $R$ is bounded, it is now isometric everywhere, and identity \eqref{Rex} implies that $\Sigma_c$ is optimal. Then it follows from \eqref{contser} that $\Sigma$ is optimal, and the proof is complete.
 %   In the Pontryagin state space settings,  the proof is given  in \cite{SapAr}. The same proof can be applied in the general case. Similarly, if $\Sigma$ is observable and $\Sigma_1$ is an arbitrary passive and %observable realization  of $\theta,$ , then the system $\Sigma$ is $^*$-optimal if and only if it holds
% \begin{equation} \label{sufnes-opti*}
 %  B_1^*(I-\overline{z}A_1^*)^{-1}(I-zA_1)^{-1}B_1 \leq B^*(I-\overline{z}A^*)^{-1}(I-zA)^{-1}B , \qquad z \in \Omega.
  %\end{equation}
 \end{proof}

The main results of \cite[Theorem 1.1]{Seppo} were generalized to the Pontryagin state space setting in \cite[Theorem 4.4]{Lassi}. By using Definition   \ref{defidef}, it can be shown that  parts of this result, as well as \cite[Theorem 1]{ArNu2}, hold also in the case when all the spaces are indefinite. Moreover, certain parts of \cite[Theorem 1.1]{Seppo}, \cite[Theorem 1]{ArNu2} and \cite[Theorem 4.4]{Lassi}   can be improved. Before stating these results,  some lemmas are needed.
\begin{lemma}\label{equivopti} Let $\theta \in \SK$. Then the  following statements are equivalent:
\begin{itemize}
  \item[\textrm{(i)}] all $\kappa$-admissible minimal passive realizations of $\theta$  are unitarily similar;
  \item[\textrm{(ii)}] there exists a minimal passive realization of $\theta$ such that it is both optimal and $^*$-optimal;
  \item[\textrm{(iii)}] all $\kappa$-admissible minimal passive realizations of $\theta$  are both optimal and $^*$-optimal.
\end{itemize}
\end{lemma}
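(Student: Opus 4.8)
The structure of the argument is a cycle of implications $\mathrm{(i)}\Rightarrow\mathrm{(iii)}\Rightarrow\mathrm{(ii)}\Rightarrow\mathrm{(i)}$, with the existence of both an optimal and a $^*$-optimal minimal realization (Theorem \ref{optimals}) as the engine throughout. First I would dispose of $\mathrm{(ii)}\Rightarrow\mathrm{(i)}$: suppose $\Sigma$ is a minimal passive realization of $\theta$ that is simultaneously optimal and $^*$-optimal, and let $\Sigma'$ be an arbitrary $\kappa$-admissible minimal passive realization. Since $\Sigma'$ is in particular a passive realization and $\Sigma$ is optimal, one has $E_{\cX}(\sum_k A^kBu_k)\le E_{\cX'}(\sum_k A'^kB'u_k)$; since $\Sigma'$ is observable and $\Sigma$ is $^*$-optimal, the reverse inequality holds as well. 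Hence equality holds for all finite sums, so the map $\sum_k A^kBu_k\mapsto\sum_k A'^kB'u_k$ is a densely defined isometric relation (densely defined by controllability of $\Sigma$, dense range by controllability of $\Sigma'$), and by \cite[Theorem 1.4.2]{ADRS} its closure $U$ is unitary. The same bookkeeping as in the proof of Theorem \ref{optimals}(iv) then gives $A=U^{-1}A'U$, $B=U^{-1}B'$, $C=C'U$, so $\Sigma$ and $\Sigma'$ are unitarily similar; since $\Sigma'$ was arbitrary, all such realizations are unitarily similar to $\Sigma$, hence to each other.

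Next, $\mathrm{(i)}\Rightarrow\mathrm{(iii)}$. By Theorem \ref{optimals}(i),(iii) there exist an optimal minimal realization $\Sigma_{\mathrm{opt}}$ and a $^*$-optimal minimal realization $\Sigma_{*}$ of $\theta$, both $\kappa$-admissible (they are first/second minimal restrictions of a simple conservative realization, whose state space has negative index $\kappa$). Under $\mathrm{(i)}$, every $\kappa$-admissible minimal passive realization $\Sigma'$ is unitarily similar to $\Sigma_{\mathrm{opt}}$; unitary similarity preserves the quantities $E_{\cX}(\sum_k A^kBu_k)$ (this is the short computation at the end of Theorem \ref{optimals}(i)), so $\Sigma'$ satisfies the optimality inequality \eqref{optimal} and is therefore optimal. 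The identical argument with $\Sigma_{*}$ shows $\Sigma'$ is $^*$-optimal. Thus every $\kappa$-admissible minimal passive realization is both optimal and $^*$-optimal, which is $\mathrm{(iii)}$.

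Finally $\mathrm{(iii)}\Rightarrow\mathrm{(ii)}$ is immediate once we know the class of $\kappa$-admissible minimal passive realizations of $\theta$ is nonempty: Lemma \ref{repres} produces the minimal restrictions $\Sigma'$, $\Sigma''$ of any passive realization, and these are $\kappa$-admissible, so such realizations exist; any one of them is, by $\mathrm{(iii)}$, both optimal and $^*$-optimal, giving $\mathrm{(ii)}$. The one point that needs a word of care — and the only place I expect friction — is the observability hypothesis built into the definition of $^*$-optimality: in $\mathrm{(ii)}\Rightarrow\mathrm{(i)}$ one must make sure that when invoking $^*$-optimality of $\Sigma$ against the competitor $\Sigma'$, the competitor $\Sigma'$ is observable, which it is since it is minimal; and in $\mathrm{(i)}\Rightarrow\mathrm{(iii)}$ one must check that the $^*$-optimal realization $\Sigma_*$ supplied by Theorem \ref{optimals}(iii) is indeed observable (it is, being minimal) so that the transported realization $\Sigma'$ inherits $^*$-optimality rather than some vacuous variant. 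Beyond that the proof is entirely a matter of assembling Theorem \ref{optimals}, Lemma \ref{repres}, and the unitary-similarity computation from Theorem \ref{optimals}(iv).
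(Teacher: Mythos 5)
Your proof is correct and follows essentially the same route as the paper: the cycle $\mathrm{(i)}\Rightarrow\mathrm{(iii)}\Rightarrow\mathrm{(ii)}\Rightarrow\mathrm{(i)}$, with unitary similarity transporting the energies $E_{\cX}\bigl(\sum_k A^kBu_k\bigr)$ for $\mathrm{(i)}\Rightarrow\mathrm{(iii)}$, and the optimal-plus-$^*$-optimal hypothesis forcing the densely defined intertwining map to be isometric, hence extendable to a unitary via \cite[Theorem 1.4.2]{ADRS}, for $\mathrm{(ii)}\Rightarrow\mathrm{(i)}$. The only cosmetic difference is that the paper obtains that map by citing the weak similarity of minimal realizations (Proposition \ref{weaksim1}) and then upgrading $Z$ to a unitary, whereas you construct the isometric relation directly from the controllability data; these are the same construction, and your attention to the observability hypothesis in the definition of $^*$-optimality is exactly the right point to flag.
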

\begin{proof}
  \textrm{(i)} $\Rightarrow$  \textrm{(iii)}.  Suppose  \textrm{(i)}.  Let $$\Sigma_1=(A_1,B_1,C_1,D;\cX_1,\cU,\cY;\kappa), \quad \Sigma_2=(A_2,B_2,C_2,D;\cX_2,\cU,\cY;\kappa)$$  be, respectively,  minimal passive and optimal  ($^*$-optimal)  minimal passive realizations of $\theta.$ Let $U$ be the unitary operator from $\cX_1$ to $\cX_2$ with the properties described in \eqref{unisim}. An easy calculation shows that
  $$    E_{\cX_2}\left(    \sum_{k=0}^{n}A_2^kB_2u_k    \right) =   E_{\cX_1}\left( U   \sum_{k=0}^{n}A_1^kB_1u_k    \right) =  E_{\cX_1}\left(    \sum_{k=0}^{n}A_1^kB_1u_k    \right)    $$ for every $u \in \cU$ and for every $n=0,1,2,\ldots$ which implies that $\Sigma_1$ is actually optimal ($^*$-optimal), % If one considers minimal $^*$-optimal passive realization $\Sigma_2'$ instead of $\Sigma_2$, same argument shows that $\Sigma_1$ is $^*$-optimal,
  and  therefore \textrm{(iii)} holds.

 \textrm{(iii)}  $\Rightarrow$   \textrm{(ii)}.  The claim  \textrm{(iii)} trivially implies \textrm{(ii)}.

 \textrm{(ii)}  $\Rightarrow$   \textrm{(i)}.  Suppose \textrm{(ii)}. Let the systems $$\Sigma_1=(A_1,B_1,C_1,D;\cX_1,\cU,\cY;\kappa), \quad \Sigma_2=(A_2,B_2,C_2,D;\cX_2,\cU,\cY;\kappa)$$  be   minimal passive realizations  of $\theta$ such that $\Sigma_1$ is optimal and $^*$-optimal.  Let $Z$ be the weak similarity mapping  from $\cX_1$ to $\cX_2$  with the  properties described in \eqref{weaksim}. It follows from \eqref{weaksim} that all elements of the form $\sum_{k=0}^nA_1^kB_1u_k$ belongs to the domain of $Z,$ and $Z\left(\sum_{k=0}^nA_1^kB_1u_k\right)=\sum_{k=0}^nA_2^kB_2u_k$. Recall also here the construction of $Z$ in the proof of \cite[Theorem 2.5]{Lassi}.  Since $\Sigma_1$ is both optimal and $^*$-optimal,
  $$    E_{\cX_2}\left(    \sum_{k=0}^{n}A_2^kB_2u_k    \right) =   E_{\cX_2}\left( Z  \sum_{k=0}^{n}A_1^kB_1u_k    \right) =  E_{\cX_1}\left(    \sum_{k=0}^{n}A_1^kB_1u_k    \right).$$ Then it follows from  \cite[Theorem 1.4.2]{ADRS} that the operator $Z$ has a unitary extension, and the properties in \eqref{unisim} follow by continuity. Therefore $\Sigma_1$ and $\Sigma_2$ are unitarily similar. Since unitary similarity clearly is a transitive property,  \textrm{(i)} holds, and the proof is complete.
\end{proof}

\begin{lemma}\label{opti-cont}
  If the system $\sys$ is an optimal passive realization of $\theta \in \SK,$ then $\cX^c \subset \cX^o.$
\end{lemma}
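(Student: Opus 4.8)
The plan is to compare the optimal system $\Sigma$ with its \emph{own} first minimal restriction and to read off the inclusion from the orthogonal decomposition of the state space furnished by Lemma~\ref{repres}. Since $\Sigma$ is a passive realization of $\theta\in\SK$, Proposition~\ref{OnHilb} guarantees that $(\cX^o)^\perp,(\cX^c)^\perp$ and $(\cX^s)^\perp$ are Hilbert subspaces, so Lemma~\ref{repres} applies and yields the first minimal restriction $\Sigma'=(A',B',C',D;\cX',\cU,\cY;\kappa)$ with $\cX'=\overline{P_{\cX^o}\cX^c}$, the orthogonal decomposition $\cX=(\cX^o)^\perp\oplus\cX'\oplus(\cX^o\cap(\cX^c)^\perp)$ in which the first and third summands are Hilbert subspaces, and the intertwining identities $P_{\cX'}A^nB=A'^nB'$ for all $n\in\dN_0$. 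In particular $\cX'$ has negative index $\kappa$, so $\Sigma'$ is a $\kappa$-admissible passive realization of $\theta$ against which the optimality inequality for $\Sigma$ may be tested.

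Fix $n\in\dN_0$ and $u_0,\dots,u_n\in\cU$ and set $v=\sum_{k=0}^{n}A^kBu_k$. Each summand lies in $\ran A^kB\subseteq\cX^c$, so $v\in\cX^c$; hence $v$ is orthogonal to $(\cX^c)^\perp$ and therefore has vanishing component in the summand $\cX^o\cap(\cX^c)^\perp$. Thus $v=P_{(\cX^o)^\perp}v+P_{\cX'}v$, and by the intertwining identity $P_{\cX'}v=\sum_{k=0}^{n}A'^kB'u_k$. Orthogonality of the decomposition gives $E_{\cX}(v)=E_{(\cX^o)^\perp}(P_{(\cX^o)^\perp}v)+E_{\cX'}(P_{\cX'}v)$, and since $(\cX^o)^\perp$ is a Hilbert subspace the first term is nonnegative; this already yields $E_{\cX'}(\sum_{k}A'^kB'u_k)\le E_{\cX}(\sum_{k}A^kBu_k)$ (which is \eqref{dil-re-op} for $\widehat\Sigma=\Sigma$). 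Optimality of $\Sigma$ gives the opposite inequality, so equality holds, and the orthogonal splitting then forces $E_{(\cX^o)^\perp}(P_{(\cX^o)^\perp}v)=0$. As the inner product on $(\cX^o)^\perp$ is positive definite, $P_{(\cX^o)^\perp}v=0$, i.e.\ $v\in\cX^o$.

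Since vectors of the form $\sum_{k=0}^{n}A^kBu_k$ span a dense subset of $\cX^c$ by \eqref{cont1} and $\cX^o$ is closed, it follows that $\cX^c\subseteq\cX^o$. I expect the only delicate part to be the geometric bookkeeping with the regular subspaces $\cX^c,\cX^o,\cX'$ and $\cX^o\cap(\cX^c)^\perp$ and their orthogonal complements --- in particular, that $v\in\cX^c$ really annihilates the third summand and that the projection onto $(\cX^o)^\perp$ appearing in the three-fold splitting agrees with the orthogonal projection onto $(\cX^o)^\perp$ --- but every one of these facts is established in (the proof of) Lemma~\ref{repres}, and no estimate beyond positivity of the form on the Hilbert subspaces is needed.
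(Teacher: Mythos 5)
Your proof is correct and follows essentially the same route as the paper: both arguments split off the Hilbert summand $(\cX^o)^\perp$, test the optimality inequality of $\Sigma$ against one of its own restrictions, and conclude that the $(\cX^o)^\perp$-component of $\sum_{k}A^kBu_k$ must vanish by positive definiteness there. The only (immaterial) difference is that you compare against the first minimal restriction $\Sigma'$ with the three-fold decomposition $\cX=(\cX^o)^\perp\oplus\cX'\oplus(\cX^o\cap(\cX^c)^\perp)$, whereas the paper compares against the observable restriction $\Sigma_o$ using the simpler two-block decomposition $\cX=(\cX^o)^\perp\oplus\cX^o$ from \eqref{rep-obse}.
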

\begin{proof}
  According to Proposition \ref{OnHilb}, the spaces $\cX^o$ and $(\cX^o)^\perp$ are regular subspaces and $(\cX^o)^\perp$ is a Hilbert space. It follows from Lemma \ref{repres} that the system operator $T$ of $\Sigma$ can be represented as in \eqref{rep-obse},
  %
  %
 % \eqref{obs1} that the system operator $T$ of $\Sigma$ can be represented as
%  \begin{align*}\label{rep-iso}
%    T&=\begin{pmatrix}
 %       \begin{pmatrix}
   %       A_{11} & A_{12} \\
   %       0 & A_0
    %    \end{pmatrix} & \begin{pmatrix}
     %                     B_1 \\
     %                     B_0
     %                   \end{pmatrix} \\
      %  \begin{pmatrix}
      %                     0&
     %                     C_0
     %                   \end{pmatrix} & D
    %  \end{pmatrix} :\begin{pmatrix}
    %                   \begin{pmatrix}
    %                 (\cX^o)^\perp   \\
    %                   \cX^o
    %                 \end{pmatrix} \\
    %                   \cU
            %         \end{pmatrix}\to\begin{pmatrix}
          %             \begin{pmatrix}
          %             (\cX^o)^\perp \\
          %             \cX^o
          %           \end{pmatrix} \\
          %             \cY
          %          \end{pmatrix}, \\
     %       \text{where} \qquad          A&=\begin{pmatrix}
     %     A_{11} & A_{12} \\
      %    0 & A_0
     %   \end{pmatrix}, \qquad B= \begin{pmatrix}
       %                   B_1 \\
       %                   B_0
       %                 \end{pmatrix}, \qquad C= \begin{pmatrix}
       %                    0&
        %                  C_0
        %                \end{pmatrix}.
  %   \end{align*}'
   %   Since $ (\cX^o)^\perp$ is a Hilbert space,
   and the restriction      $\Sigma_o=(A_o,B_o,C_o,D;\cX^o,\cU,\cY;\kappa)$  of $\Sigma$ to the observable subspace $\cX^o$ is observable  passive realization of $\theta.$ For $n=0,1,2,\ldots$, it holds $$A^n=  \begin{pmatrix}
          A_{1}^n & f(n) \\
          0 & A_0^n
        \end{pmatrix},  $$ where $f(n)$ is an operator depending on $n$. % (the actual form of $f(n)$ is not important).
        Then for any $N\in \dN_0$ and any $\{u_n \}_{n=0}^N \subset \cU,$ it holds
        \begin{align*}
          \sum_{n=0}^{N} A^nBu_n
                         \!=\!\begin{pmatrix}
                       \sum_{n=0}^{N}\left(   A_{1}^n B_1u_n +f(n) B_ou_n\right)\\
                         \sum_{n=0}^{N}  A_o^nB_ou_n
                        \end{pmatrix}
                       \!\! =\!\!   \begin{pmatrix}
                       P_{(\cX^o)^\perp}\left(  \sum_{n=0}^{N} A^nBu_n\right)\\
                        P_{\cX^o} \left(\sum_{n=0}^{N} A^nBu_n\right)
                        \end{pmatrix}.
        \end{align*} This implies
         \begin{align*}
         E_{\cX}\!\!\left( \sum_{n=0}^{N} A^nBu_n \! \right) %=&    E_{\cX}\left( \sum_{n=0}^{N} \begin{pmatrix}
      %    A_{11}^n & f(n) \\
      %    0 & A_0^n
       % \end{pmatrix}    \begin{pmatrix}
        %                  B_1u_n \\
         %                 B_0u_n
          %              \end{pmatrix} \!\right) =E_{\cX}\left( \sum_{n=0}^{N}\begin{pmatrix}
           %              A_{11}^n B_1u_n +f(n) B_0u_n\\
            %              A_0^nB_0u_n
             %           \end{pmatrix} \!\right) \\
                        \!=&   E_{(\cX^o)^\perp}\left(     P_{(\cX^o)^\perp}\left(\sum_{n=0}^{N} A^nBu_n\right)   \right)  + E_{\cX^o}\left(\sum_{n=0}^{N} A_o^nB_ou_n \!\right).
        \end{align*} But since $\Sigma$ is optimal  and $(\cX^o)^\perp$ is a Hilbert space, one deduces that $$  P_{(\cX^o)^\perp}\left(\sum_{n=0}^{N} A^nBu_n\right) =0.$$ That is, $\mathrm{span}\{ A^nB : n=0,1,\ldots \} \subset \cX^o$ and since $\cX^o$ is closed, also $$\overline{\mathrm{span}}\{ A^nB : n=0,1,\ldots \} = \cX^c \subset \cX^o.$$ %and the proof is complete.
  \end{proof}

%\newpage

The next Theorem contains promised extensions for some results of \cite{Seppo}. In particular, the fact that statements  \textrm{(I)(b), (II)(b)} and
\textrm{(III)(b)} implies the other statements, respectively, in parts \textrm{(I), (II)} and
\textrm{(III)}, is new also in the Hilbert space setting.
\begin{theorem} \label{sepontulos} Let $\theta \in \SK,$ where $\cU$ and $\cY$ are Pontryagin spaces with the same negative index.
  \begin{enumerate}
    \item[\textrm{(I)}] The following statements are equivalent:
              \begin{enumerate}
              \item[\textrm{(a)}] $\varphi_\theta\equiv0$;
              \item[\textrm{(b)}] all  $\kappa$-admissible controllable passive realizations of $\theta$ are minimal isometric;
              %\item[\textrm{(c)}] There exists a  minimal  isometric realization of $\theta;$
              \item[\textrm{(c)}] there exists an  observable conservative  realization of $\theta;$
               \item[\textrm{(d)}]  all simple conservative realization of $\theta$ are observable;
                \item[\textrm{(e)}] all  observable co-isometric realizations of $\theta$ are conservative.
               \end{enumerate}
    \item[\textrm{(II)}]The following statements are equivalent:
               \begin{enumerate}
              \item[\textrm{(a)}] $\psi_\theta\equiv0$;
              \item[\textrm{(b)}] all  $\kappa$-admissible observable  passive realization of $\theta$ are minimal co-isometric;
              %\item[\textrm{(c)}] There exists a minimal co-isometric  realization of $\theta;$
              \item[\textrm{(c)}] there exists a controllable conservative   realization of $\theta;$
               \item[\textrm{(d)}]  all simple conservative realization of $\theta$ are controllable;
                \item[\textrm{(e)}] all  controllable  isometric  realizations of $\theta$ are conservative.
               \end{enumerate}
    \item[\textrm{(III)}] The following statements are equivalent:
              \begin{enumerate}
              \item[\textrm{(a)}]  $\varphi_\theta\equiv0$ and $\psi_\theta\equiv0$;
               \item[\textrm{(b)}]  all  $\kappa$-admissible simple passive realization of $\theta$ are minimal conservative;
                \item[\textrm{(d)}] there exists a  minimal conservative realization of $\theta.$
               \end{enumerate}
  \end{enumerate}
\end{theorem}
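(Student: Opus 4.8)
I would build the proof around two preliminary observations, prove Part~(I) directly, deduce Part~(II) by duality, and obtain Part~(III) by combining (I) and (II). Throughout, for a $\kappa$-admissible passive realization $\Sigma$ I write $D_{T_\Sigma}=\binom{D_{T,1}}{D_{T,2}}$, $D_{T_\Sigma^*}=\binom{D_{T^*,1}}{D_{T^*,2}}$ for the blocks of its defect operators as in \eqref{Deet1}, and $\varphi_\Sigma,\psi_\Sigma$ for the off-diagonal components of its Julia embedding \eqref{embtrans}. The first observation is that, by Theorem~\ref{optimals}(i) and Definition~\ref{defidef}, $\varphi_\theta$ equals $\varphi_{\Sigma_0}$ for the optimal minimal realization $\Sigma_0$ obtained as the first minimal restriction of a simple conservative realization; since $\Sigma_0$ is controllable, formula \eqref{efs} together with \eqref{cont2} shows $\varphi_\theta\equiv0\Leftrightarrow D_{T_{\Sigma_0}}=0\Leftrightarrow\Sigma_0$ is isometric. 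The second observation is that $\varphi_\theta\equiv0$ forces $\varphi_\Sigma\equiv0$ for \emph{every} $\kappa$-admissible passive realization $\Sigma$: subtracting \eqref{lefdef3} written for $\Sigma$ and for $\Sigma_0$ and putting $z=w$ gives $\varphi_\Sigma^*(w)\varphi_\Sigma(w)=(1-|w|^2)\bigl(F_{\Sigma_0}^*(w)F_{\Sigma_0}(w)-F_\Sigma^*(w)F_\Sigma(w)\bigr)$, and feeding the truncations of $u_k=w^ku$ into the optimality inequality \eqref{optimal} and letting the length tend to infinity makes the right-hand side $\le0$ as a form on $\cU$, while the left-hand side equals $\|\varphi_\Sigma(w)u\|^2\ge0$ since $\sD_{T_\Sigma}$ is a Hilbert space; analyticity finishes it. Dually, $\psi_\theta\equiv0$ forces $\psi_\Sigma\equiv0$ for every such $\Sigma$.

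For Part~(I), the plan is a short cycle. (a)$\Rightarrow$(b): by the second observation $\varphi_\Sigma\equiv0$, so for controllable $\Sigma$ formula \eqref{efs} and \eqref{cont2} give $D_{T_\Sigma}=0$, i.e.\ $\Sigma$ is isometric; since all $\kappa$-admissible controllable isometric realizations are unitarily similar (Theorem~\ref{realz}(ii)) and $\Sigma_0$ is one of them and is observable, $\Sigma$ is observable too, hence minimal. (b)$\Rightarrow$(a) is immediate: $\Sigma_0$ is controllable passive, hence isometric by (b), hence $\varphi_\theta\equiv0$. (a)$\Rightarrow$(c): take an observable co-isometric realization $\Sigma_1$ (Theorem~\ref{realz}(i)); co-isometry gives $\sD_{T_{\Sigma_1}^*}=\{0\}$ and $\varphi_{\Sigma_1}\equiv0$ kills the feedthrough block of its Julia operator, so the latter is $\begin{pmatrix}A_1&B_1\\ C_1&D\\ D_{T,1}^*&0\end{pmatrix}$; unitarity forces $A_1D_{T,1}=0$ and $C_1D_{T,1}=0$, whence $\ran D_{T,1}\subseteq\bigcap_n\ker(C_1A_1^n)=(\cX_1^o)^\perp=\{0\}$, so $D_{T_{\Sigma_1}}=0$ and $\Sigma_1$ is conservative. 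The equivalences (c)$\Leftrightarrow$(d)$\Leftrightarrow$(e) then follow from Theorem~\ref{realz}: an observable conservative realization is simultaneously a simple conservative realization and an observable co-isometric realization, and the three uniqueness statements transport the property in question to all realizations of the respective class. Finally (d)$\Rightarrow$(a): a simple conservative realization $\widehat\Sigma$ is observable by (d), so its first minimal restriction is $\widehat\Sigma_c$ (use \eqref{rep-fmini} with $(\widehat\cX^o)^\perp=\{0\}$), which is optimal minimal by Theorem~\ref{optimals}(i); comparing the columns of the unitary $T_{\widehat\Sigma}$ shows that this restriction is isometric, so $\varphi_\theta\equiv0$.

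Part~(II) is Part~(I) applied to $\theta^\#$: Lemma~\ref{related} turns $\psi_\theta\equiv0$ into $\varphi_{\theta^\#}\equiv0$, and the map $\Sigma\mapsto\Sigma^*$ exchanges controllability with observability and isometry with co-isometry, preserves passivity, simplicity, conservativity, minimality and unitary similarity, and sends realizations of $\theta$ to realizations of $\theta^\#$. For Part~(III), statement (a) is precisely (I)(a) and (II)(a) together. (a)$\Rightarrow$(b): let $\Sigma$ be $\kappa$-admissible simple passive; by the second observation and its dual, $\varphi_\Sigma\equiv0$ and $\psi_\Sigma\equiv0$, so $D_{T,2}=D_{T^*,2}=0$, $\ran D_{T,1}\subseteq(\cX^c)^\perp$ and $\ran D_{T^*,1}\subseteq(\cX^o)^\perp$; the relations $AD_{T,1}=D_{T^*,1}L$ and $CD_{T,1}=0$ coming from unitarity of the Julia operator, combined with the $A$-invariance of $(\cX^o)^\perp$ (Lemma~\ref{repres}), give $\ran D_{T,1}\subseteq(\cX^o)^\perp\cap(\cX^c)^\perp=(\cX^s)^\perp=\{0\}$ by simplicity, so $D_{T_\Sigma}=0$; the dual argument gives $D_{T_\Sigma^*}=0$, hence $\Sigma$ is conservative, and then (I)(d) and (II)(d) make $\Sigma$ both observable and controllable, i.e.\ minimal. (b)$\Rightarrow$(d) is trivial since a simple conservative realization exists by Theorem~\ref{realz}(iii), and (d)$\Rightarrow$(a) holds because a minimal conservative realization is at once observable conservative and controllable conservative, so (I)(c)$\Rightarrow$(I)(a) and its dual give $\varphi_\theta\equiv0$ and $\psi_\theta\equiv0$.

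The step I expect to be the main obstacle is the bookkeeping in the Julia-operator computations: keeping track of which sub-block of $D_T$, $D_{T^*}$ maps into which summand of the state, input and output spaces, and invoking at exactly the right moments the invariance and annihilation properties of the controllable, observable and simple subspaces established in the proofs of Lemma~\ref{repres} and Proposition~\ref{OnHilb}. A secondary point needing care is verifying that Part~(II) genuinely follows from Part~(I) by formal duality and does not require a separate argument.
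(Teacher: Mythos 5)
Your proposal is correct, and its engine is the same as the paper's: the identity \eqref{lefdef3} plus the optimality inequality forces $\varphi_\Sigma^*(w)\varphi_\Sigma(w)\le 0$, and since $\varphi_\Sigma$ takes values in the Hilbert space $\sD_{T}$ this kills $\varphi_\Sigma$; controllability then kills $D_{T}$ via \eqref{efs}. Your treatment of (I)(a)$\Rightarrow$(b), of the equivalences (c)$\Leftrightarrow$(d)$\Leftrightarrow$(e) via the uniqueness statements attached to Theorem \ref{realz}, of (d)$\Rightarrow$(a) via the isometric controllable restriction of an observable conservative system, and of Part (II) by duality all match the paper. Where you genuinely diverge is in two places. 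First, the paper closes the cycle in (I) by proving (b)$\Rightarrow$(c): it invokes Theorem \ref{optimals}(iv) to realize the (now isometric) optimal minimal system as the first minimal restriction of a simple conservative $\widehat{\Sigma}$ and then extracts $B_1'=0$, $A_{12}'=0$ from the block form \eqref{rep-fmini} to conclude $\widehat{\Sigma}$ is observable; you instead prove (a)$\Rightarrow$(e)$\Rightarrow$(c) directly, showing that an observable co-isometric realization with $\varphi_{\Sigma_1}\equiv0$ has $TD_T=0$, hence $\ran D_{T,1}\subseteq\bigcap_n\ker(C_1A_1^n)=\{0\}$. This avoids the structure theorem \ref{optimals}(iv) entirely and is arguably more elementary. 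Second, for (III)(a)$\Rightarrow$(b) the paper restricts a simple passive $\Sigma$ to $\cX^o$, uses that $\Sigma_o$ is minimal conservative, and runs an energy comparison to force $A_2=0$ and $B_1=0$ in \eqref{rep-obse}; you instead chase the range inclusions $\ran D_{T,1}\subseteq(\cX^o)^\perp\cap(\cX^c)^\perp=(\cX^s)^\perp=\{0\}$ (and dually for $D_{T^*,1}$) out of the intertwining relations $TD_T=D_{T^*}L$, $T^*D_{T^*}=D_TL^*$ of the Julia operator. Both are sound; the paper's version localizes the work in the block decompositions of Lemma \ref{repres}, yours localizes it in the algebra of the Julia operator, at the cost of having to verify carefully (as you do) that $\varphi_\Sigma\equiv0$ and $\psi_\Sigma\equiv0$ propagate to \emph{every} $\kappa$-admissible passive realization, not just the optimal one.
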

\begin{proof}
  \textrm{(I)}    \textrm{(a)}  $\Rightarrow$  \textrm{(b)}. Suppose \textrm{(a)}. Let the systems $$\sys, \quad\Sigma_0=(A_0,B_0,C_0,D;\cX_0,\cU,\cY;\kappa)$$ be, respectively,     a controllable  passive and an optimal minimal passive realizations  of $\theta.$ Represent the Julia embeddings of $\Sigma$ and $\Sigma_0$ as in \eqref{Juliaemb}.
  Then, \eqref{rightdef} holds for $\Sigma$. Since $\varphi_\theta\equiv0,$ if follows from the definition of $\varphi_\theta$ that
  $$  I-\theta^*(w) \theta(z)=  (1-z\bar{w})B_0^*(I-\bar{w}A^*_0)^{-1}(I-zA_0)^{-1}B_0   $$ holds for every $z$ and $w$ in a sufficiently small symmetric neighbourhood $\Omega$ of the origin. Since $\Sigma_0$ is optimal, by considering the Neuman series of $(I-zA_0)^{-1}B_0$ and $(I-zA_0)^{-1}B_0,$ one deduces that
  $$    B_0^*(I-\bar{z}A^*_0)^{-1}(I-zA_0)^{-1}B_0 \leq B^*(I-\bar{z}A^*)^{-1}(I-zA)^{-1}B, \qquad z \in \Omega.         $$
  Then it holds $\varphi^*(z) \varphi(z) \leq  =0   $  for every $z \in \Omega$. But since $\varphi(z)$ is an operator whose range belongs to the Hilbert space $\mathfrak{D}_T,$ this implies $$\varphi(z)= D_{T_{,2}}^* +zD_{T_{,1}}^*(I-zA)^{-1}B = 0, \qquad z \in \Omega.$$ It follows that $ D_{T_{,2}}^*=0.$ Since $\Sigma$ is controllable, $\mathrm{span}\{(I-zA)^{-1}B; z \in \Omega\}$ is dense in $\cX,$ and therefore also $D_{T_{,1}}^*=0.$ Then   $D_T=0,$ so $T$ is  isometric, and $\Sigma$ is a controllable isometric system.
In particular, if $\Sigma$ is chosen to be minimal passive; for the existence, see Lemma \ref{repres}, the previous argument shows that $\Sigma$ is a  minimal isometric realization of $\theta.$
Since all  controllable isometric realizations of $\theta$ are unitarily similar, they  are now also minimal, and \textrm{(b)} holds.

 \textrm{(b)}  $\Rightarrow$  \textrm{(c)}. Suppose \textrm{(b)}.
%Now \textrm{(b)} trivially implies \textrm{(c.)} Suppose \textrm{(c)}.
Let $\Sigma'=(A',B',C',D;\cX',\cU,\cY;\kappa)
 $  be an
%, respectively,
  optimal minimal passive  realization of $\theta.$ The existence of $\Sigma'$ follows from Theorem \ref{optimals} (i). By assumption, $\Sigma'$ is isometric.   It follows from Theorem \ref{optimals} (iv) that $\Sigma'$ is the first minimal restriction of the simple conservative system $\sys$.  By Lemma \ref{repres}, the system operator $T_{\Sigma}$ of $\Sigma$ can be represented as in \eqref{rep-fmini}, where now $\cX'=\overline{P_{\cX^o}\cX^c}$.
%  \begin{align}
%  %\label{sec-repre-iso}
%   T_{\Sigma}=  \begin{pmatrix}
%       \begin{pmatrix}
 %        A_{11} & A_{12} & A_{13} \\
  %       0 & A_{0} & A_{23} \\
   %      0 & 0 & A_{33}
    %   \end{pmatrix} & \begin{pmatrix}
     %                    B_1 \\
      %                   B_0 \\
       %                  0
        %               \end{pmatrix} \\
   %    \begin{pmatrix}
    %     0 & C_0 & C_3
   %    \end{pmatrix} & D
    % \end{pmatrix}: \begin{pmatrix}
     %                 \begin{pmatrix}
      %                 \cX_1 \\
       %              \cX_0 \\
        %                \cX_3
         %             \end{pmatrix} \\ \cU
          %          \end{pmatrix} \to \begin{pmatrix}
           %           \begin{pmatrix}
            %           \cX_1 \\
             %           \cX_0 \\
              %          \cX_3
               %       \end{pmatrix} \\ \cY
                %    \end{pmatrix},
%  \end{align}
 % where    $\cX_1=\cX^c \cap (\cX^o)^\perp,  \cX_0=\overline{P_{\cX^c}\cX^o}$  and $ \cX_3=(\cX^c)^\perp.$
  %Since $T_{\Sigma_0}$ is isometric, similar calculations that where used  to derive \eqref{derived} show that
   Since the system operator $$   T_{\Sigma'}=  \begin{pmatrix}
          A' &
                         B' \\
          C'
    & D
     \end{pmatrix}: \begin{pmatrix}
                        \cX'
    \\ \cU
                    \end{pmatrix} \to \begin{pmatrix}
                        \cX'
                     \\ \cY
                    \end{pmatrix},$$ of $\Sigma'$  is isometric and $T_{\Sigma}$ is unitary, an easy calculation using  the fact that the range space $(\cX^o)^\perp$ is a Hilbert space shows that $B_1'=0$ and $A_{12}'=0$ in \eqref{rep-fmini}. But then for every $x \in (\cX^o)^\perp$ and every $n=0,1,2,\ldots,$
                    \begin{equation*}%\label{derived}
                  B^*A^{*n}x= \begin{pmatrix}
                         0 &
                         {B'}^* &
                         0
                       \end{pmatrix} \begin{pmatrix}
                        {A_{11}'}^* &0&0\\
                        0&{A_{0}'}^* & 0\\
                       {A_{13}'}^* &  {A_{23}'}^* &{A_{33}'}^*
       \end{pmatrix}^n \begin{pmatrix}
                         x \\ 0 \\ 0
                       \end{pmatrix}=0.
                    \end{equation*}
   That is,
   $(\cX^o)^\perp \subset (\cX^c)^\perp$ and therefore   $\cX^c \subset \cX^o$.  Since $\Sigma$ is simple, this implies now $\cX^o=\cX.$ Then $\Sigma$ is observable, and \textrm{(c)} holds. % $\cX_1 \subset \cX^c,$ this implies $\cX_1=0.$ But then it must be $\cX_0= \overline{P_{\cX^c}\cX^o} =\cX_3^\perp =\cX^c, $ which implies $\cX^c \subset \cX^o.$ Since $\cX$ is simple, it follows now from the identities  \eqref{cont1}--\eqref{simp1} that $\cX^o = \cX^s=\cX.$ That is, $\Sigma$ is observable, and \textrm{(d)} holds.

%Suppose now  that \textrm{(b)} holds. Then, especially, all minimal passive realizations of $\theta$ are isometric, and therefore unitary similar. It follows from Lemma \ref{equivopti} that they are also optimal. %Therefore, the system operator $T$ of the optimal minimal system that defines $\varphi_\theta$ in \eqref{embtrans} is isometric. Then $D_T=0$ in \eqref{Deet}, which implies $\varphi_\theta\equiv0$ and   \textrm{(a)} %holds.
 \textrm{(c)}  $\Rightarrow$  \textrm{(a)}. Suppose \textrm{(c)}.
%Suppose %then that
%\textrm{(d)}.
%holds.
 Let $\sys$  be an observable conservative realization  of $\theta.$  By Lemma \ref{repres}, $\Sigma$ can be represented as in \eqref{rep-fmini}. The first minimal restriction \eqref{res-fmini} of $\Sigma$  is an optimal minimal realization of $\theta$ by Theorem \ref{optimals} (i). But since $\Sigma$ is observable, $\cX^o=\cX$ and $(\cX^o)^\perp=\{0\}.$  It follows  that the reprentations \eqref{rep-contro} and \eqref{rep-fmini} coinsides. That is, the first minimal restriction  $\Sigma'$ is  just a restriction to the controllable subspace of $\Sigma.$  By Lemma \ref{repres}, $\Sigma'$ is now isometric.
                  Thus if one constructs a Julia operator of $T_{\Sigma'}$ as in \eqref{JuliaoperatorT}, $D_{T_{\Sigma'}}=0,$ and then it follows from the definition of $\varphi_{\theta}$ and \eqref{embtrans} that  $\varphi_{\theta}\equiv0,$ and \textrm{(a)} holds.

%Ei onnistunu näin :)...Denote the system operator  of $\Sigma$ as $T.$ Construct a conservative dilation $\Sigma=(\widehat{A},\widehat{B},\widehat{C},D;\widehat{\cX},\cU,\cY;\kappa)$ of $\Sigma$ by using the recipe and notation given in the %proof of \ref{consdil}. Since $T$ is isometric, $D_T=0$ and therefore $\sD_T=\{0\}.$ That is, the vector $\hat{x}$ is always  of the form $\hat{x}=(\ldots, f_{-2},f_{-1},\underline{x},0,0,\ldots)$,  where underlined %element belongs to $ \cX.$ Now an easy calculation show that  it holds
%$$     \widehat{A}^n\hat{x}  =  (\ldots, f_{-2},f_{-1},  \underset{ n \;\text{zeros}}{\underbrace{0,\ldots,0}} ,\underline{A^nx},0,0,\ldots), \qquad
%\text{and}
%  \widehat{C}\widehat{A}^n \hat{x} =CA^nx, \qquad n=0,1,2,\ldots      $$  for

The equivalences of the statements \textrm{(c)}, \textrm{(d)} and \textrm{(e)} follow easily from the facts that all observable co-isometric realizations of $\theta$  are unitarily similar, all simple conservative realization of $\theta$  are unitarily similar and unitary similarity preserves the structural properties of the system and system operator.  The part  \textrm{(I)} is proven.

   \textrm{(II)} The proof is analogous to the proof of the part \textrm{(I)}, and the details are omitted.

    \textrm{(III)} \textrm{(a)}  $\Rightarrow$  \textrm{(b)}.
     Suppose \textrm{(a)}.  By combining the parts   \textrm{(I)} and   \textrm{(II)}, it follows that all controllable or observable passive realizations of $\theta$ are minimal conservative. Consider a simple  passive realization  $\Sigma\!=(A.B,C,D;\cX,\cU,\cY;\kappa)$ of $\theta.$  It follows from Lemma \ref{repres} that
  the contractive system operator $T$ of $\Sigma$ can be represented as in \eqref{rep-obse},
 %   $$      T_{\Sigma} =\begin{pmatrix}
  %                 A_{11} & A_{12} & B_{1} \\
   %               0 & A_{0} & B_{0} \\
    %              0 & C_{0} & D
     %           \end{pmatrix}: \begin{pmatrix}
      %              (\cX^o)^\perp \\ { \cX}^o\\
       %             \cU
        %          \end{pmatrix}  \to \begin{pmatrix}
         %           (\cX^o)^\perp \\ { \cX}^o\\
          %          \cY
                 % \end{pmatrix},  $$
                   where the restriction $\Sigma_o$ in \eqref{res-obse}  %$\Sigma_0=(A_0,B_0,C_0,D;\cX_0,\cU,\cY;\kappa)$
                   is  observable passive, and therefore now minimal conservative. Then the  system operator
                    $$   T_{\Sigma_o}=  \begin{pmatrix}
          A_o &
                         B_o \\
          C_o
    & D
     \end{pmatrix}: \begin{pmatrix}
                        \cX^o
    \\ \cU
                    \end{pmatrix} \to \begin{pmatrix}
                        \cX^o
                     \\ \cY
                    \end{pmatrix}$$
                    of
                    ${\Sigma_o}$  is unitary.  % Since $T_\Sigma$ is contractive and the system operator $T_{\Sigma_0}$ of $\Sigma_0$ is unitary,  $T_\Sigma^*$ is also contractive and   $T_{\Sigma_0}^*$ is also unitary.
                  Let $x\in \cX^o.$  Then, by contractivity of $T$ and unitarity of $T_{\Sigma_o}$%_{(\cX_o)^\perp \oplus \cX_o \oplus \cY}
                \begin{align*}  % E \left(  T_{\Sigma}  \begin{pmatrix}
                %   0\\ x_0\\
                 %   0
                 % \end{pmatrix} \right)&=
                 E \left(\begin{pmatrix}
                   A_{1} & A_{2} & B_{1} \\
                  0 & A_{o} & B_{o} \\
                  0 & C_{o} & D
                \end{pmatrix} \begin{pmatrix}
                   0\\ x\\
                    0
                  \end{pmatrix} \right)&=E \left(\begin{pmatrix}
                  A_{2} x\\ A_{o} x\\
                    C_{o} x
                  \end{pmatrix}\right)= E \left( A_{2} x\right) +  E \left(\begin{pmatrix}
                  A_{0} x\\
                    C_{o} x
                  \end{pmatrix}\right) \\ &=  E \left(  Tx\right) \leq
     E(
                    x)% = E\left( T_{\Sigma_0}  \begin{pmatrix}
                %    x_0\\
               %     0
               %   \end{pmatrix} \right)
           = E(  T_{\Sigma_o}x)    =
                E \left(\begin{pmatrix}
                  A_{0} x\\
                    C_{o} x
                  \end{pmatrix}\right).
                \end{align*} Since $A_{2} x \in (\cX^o)^\perp$ and $(\cX^o)^\perp$ is a Hilbert space, it follows that  $A_{2}  =0.$ If one chooses $u \in \cU,$ a similar argument as above shows that $B_1=0.$ Then for any $n \in \dN$, it holds \begin{align*}  A^nB&=  \begin{pmatrix}
                   A_{1} &0  \\
                  0 & A_{o}
                \end{pmatrix}^n  \begin{pmatrix}
                    0 \\
                   B_{o}
                \end{pmatrix}=\begin{pmatrix}
                    0 \\
                  A_o^n B_{o}
                \end{pmatrix}, \\ A^{*n}C^*&=  \begin{pmatrix}
                   A_{1}^* &0  \\
                  0 & A_{o}^*
                \end{pmatrix}^n  \begin{pmatrix}
                    0 \\
                   C_{o}^*
                \end{pmatrix}=\begin{pmatrix}
                    0 \\
                  A_o^{*n} C_{o}^*
                \end{pmatrix} .\end{align*} This is only possible if $(\cX^o)^\perp=0,$ since $\Sigma$ is simple. But then the systems $\Sigma_0$ and $\Sigma$ coincide, so the system $\Sigma$ is minimal conservative, and  \textrm{(b)} holds.

Now \textrm{(b)} trivially implies \textrm{(c)}, and
the fact that \textrm{(c)} implies \textrm{(a)} follows by combining the parts \textrm{(I)} and \textrm{(II)}. The proof is complete.
\end{proof}
\begin{remark}
  If $\cU$ and $\cY$ are Hilbert spaces, it follows from \cite[Lemma 3.2]{Lassi} that  simple passive realizations of $\theta \in \SK$ are $\kappa$-admissible. Therefore, in that case
   it is not necessary to assume the considered systems to be $\kappa$-admissible in   Lemma \ref{equivopti} and  Theorems  \ref{sepontulos} and \ref{kulmaminimal},  since the other assumptions already guarantee it. However, if $\cU$ and $\cY$ are Pontryagin spaces with the same negative index, it is not known that are all simple passive, or even all minimal passive, realizations of $\theta \in \SK$ $\kappa$-admissible.
\end{remark}

  If $\varphi_{\theta}\equiv0$  ($\psi_{\theta}\equiv0$), then  Theorem \ref{sepontulos} shows  that all $\kappa$-admissible minimal passive realizations of $\theta \in \SK$ are minimal isometric (co-isometric). In particular, they are controllable isometric (observable coisometric), and it follows from Theorem \ref{realz} that they are unitarily similar. This situation can occur also when the defect functions do not vanish identically.
In what follows, the range of $\varphi_{\theta}$ and the domain of ${\psi_{\theta}}$ will be denoted, respectively, by $\mathfrak{D}_{\varphi_{\theta}}$ and $\mathfrak{D}_{\psi_{\theta}}$.   In the  Hilbert space settings, it is well known \cite{Boikop1,Boikop2} that for a standard Schur function   $\theta \in \mathbf{S}(\cU,\cY), $ there exists a function $\chi_{\theta} \in L^\infty(\mathfrak{D}_{\psi_{\theta}},\mathfrak{D}_{\varphi_{\theta}})$ such that the function
\begin{equation}\label{kulmafunktio}
  \Theta(\zeta):=\begin{pmatrix}
    \theta(\zeta) & \psi_{\theta}(\zeta) \\
    \varphi_{\theta}(\zeta) & \chi_{\theta}(\zeta)
  \end{pmatrix}%: \begin{pmatrix}
                %   \cU \\ \sD_{\psi_{\theta}}
                % \end{pmatrix} \to \begin{pmatrix}
                %   \cY\\ \sD_{\varphi_{\theta}}
                % \end{pmatrix}
\end{equation} has contractive values for a.e. $\zeta \in \dT.$ Under certain normalizing conditions for the functions $ \varphi_{\theta}$ and $ \psi_{\theta}$, the function $\chi_{\theta}$ is unique. In general, $\chi_{\theta}$ may has negative Fourier coefficients and therefore it is not a Schur function. In that case the function $\Theta$ in \eqref{kulmafunktio} is not a Schur function either. However, Arov and Nudelmann showed in \cite{ArNu1,ArNu2} that $\Theta$  is a Schur function if and only if all minimal passive realizations of $\theta$ are unitarily similar. This result  will be generalized to the indefinite settings in the following theorem. The proof uses optimal and $^*$-optimal realizations as in \cite{ArNu1,ArNu2}, but it is more elementary.

\begin{theorem}\label{kulmaminimal}
  Let $\theta \in \SK$, where $\cU$ and $\cY$ are Pontryagin spaces with the same negative index, and let $\varphi_\theta$ and $\psi_\theta$ be defect functions of $\theta.$ Then all $\kappa$-admissible minimal passive realizations of $\theta$ are unitarily similar if and only if there exist an $\cL(\mathfrak{D}_{\psi_{\theta}},\mathfrak{D}_{\varphi_{\theta}})$-valued function
  $\chi_{\theta}$  analytic in a neighbourhood of the origin such that
  \begin{equation}\label{kulmafunktio2}
  \Theta=\begin{pmatrix}
    \theta & \psi_{\theta} \\
    \varphi_{\theta}& \chi_{\theta}
  \end{pmatrix} \in \mathbf{S}_{\kappa}\left(\begin{pmatrix}
                  \cU \\ \sD_{\psi_{\theta}}
                 \end{pmatrix} , \begin{pmatrix}
                   \cY\\ \sD_{\varphi_{\theta}}
                 \end{pmatrix} \right) \end{equation}
\end{theorem}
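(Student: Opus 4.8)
The plan is to combine Lemma~\ref{equivopti} with the structure of Julia embeddings. By Lemma~\ref{equivopti}, the statement ``all $\kappa$-admissible minimal passive realizations of $\theta$ are unitarily similar'' is equivalent to ``there exists a minimal passive realization of $\theta$ that is simultaneously optimal and $^*$-optimal''; such a realization is automatically $\kappa$-admissible by Theorem~\ref{optimals}(i),(iv) and Lemma~\ref{repres}. So it suffices to show that this condition is in turn equivalent to the existence of $\chi_\theta$ with \eqref{kulmafunktio2}.

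\textbf{Necessity.} Let $\Sigma=(A,B,C,D;\cX,\cU,\cY;\kappa)$ be a minimal passive realization of $\theta$ that is both optimal and $^*$-optimal, and let $\widetilde\Sigma$ be its Julia embedding, with transfer function $\left(\begin{smallmatrix}\theta&\psi\\\varphi&\chi\end{smallmatrix}\right)$ as in \eqref{Juliaemb}--\eqref{embtrans}. Since $\Sigma$ is an optimal minimal realization, Definition~\ref{defidef} gives $\varphi=\varphi_\theta$; since it is also a $^*$-optimal minimal realization, $\psi=\psi_\theta$. The entry $\chi$ is $\cL(\sD_{T^*},\sD_{T})=\cL(\sD_{\psi_\theta},\sD_{\varphi_\theta})$-valued and holomorphic near $0$. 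The system operator $T_{\widetilde\Sigma}$ is a Julia operator, hence unitary, so $\widetilde\Sigma$ is conservative; and $\widetilde\Sigma$ is minimal because an embedding of a minimal system is minimal (cf.\ the proof of Proposition~\ref{larmino}). Being simple conservative with state space of negative index $\kappa$, $\widetilde\Sigma$ has, by Theorem~\ref{realz}, its transfer function $\Theta:=\left(\begin{smallmatrix}\theta&\psi_\theta\\\varphi_\theta&\chi\end{smallmatrix}\right)$ in $\mathbf{S}_\kappa(\cU\oplus\sD_{\psi_\theta},\cY\oplus\sD_{\varphi_\theta})$, which is \eqref{kulmafunktio2} with $\chi_\theta:=\chi$.

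\textbf{Sufficiency.} Assume $\Theta\in\mathbf{S}_\kappa$ as in \eqref{kulmafunktio2}, and take a simple conservative realization $\widehat\Sigma$ of $\Theta$ (Theorem~\ref{realz}(iii)); its state space $\widehat\cX$ has negative index $\kappa$. Decomposing the unitary $T_{\widehat\Sigma}$ into $3\times3$ blocks along $\widehat\cX\oplus\cU\oplus\sD_{\psi_\theta}\to\widehat\cX\oplus\cY\oplus\sD_{\varphi_\theta}$, the ``corner'' $\Sigma_1=(\widehat A,\widehat B_1,\widehat C_1,D_{11};\widehat\cX,\cU,\cY;\kappa)$ is a $\kappa$-admissible passive realization of $\theta$: its system operator is the compression of $T_{\widehat\Sigma}$ to a subspace whose orthogonal complement in the codomain is the Hilbert space $\sD_{\varphi_\theta}$, hence a contraction. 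Now adjoin the output $\sD_{\varphi_\theta}$ to $\Sigma_1$, i.e.\ pass to the embedding with system operator $\left(\begin{smallmatrix}\widehat A & \widehat B_1\\ \left(\begin{smallmatrix}\widehat C_1\\\widehat C_2\end{smallmatrix}\right) & \left(\begin{smallmatrix}D_{11}\\D_{21}\end{smallmatrix}\right)\end{smallmatrix}\right)$: this is a column block of $T_{\widehat\Sigma}$, hence isometric, and has transfer function $\left(\begin{smallmatrix}\theta\\\varphi_\theta\end{smallmatrix}\right)$, so its controllable restriction is controllable isometric. On the other hand, the corresponding column block of the Julia embedding of the optimal minimal realization $\Sigma_{\mathrm{opt}}$ of $\theta$ is, by Definition~\ref{defidef}, again a controllable isometric realization of $\left(\begin{smallmatrix}\theta\\\varphi_\theta\end{smallmatrix}\right)$. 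By Theorem~\ref{realz} these two are unitarily similar, and since adjoining outputs and passing to controllable restrictions leave the products $\widehat A^n\widehat B_1$ unchanged (by \eqref{contser}), the controllability Gram of $\Sigma_1$ equals that of $\Sigma_{\mathrm{opt}}$. A dual computation, adjoining the input $\sD_{\psi_\theta}$ and using the co-isometric realization of $(\theta\ \psi_\theta)$ together with the Julia embedding of the $^*$-optimal minimal realization $\Sigma_{*\mathrm{opt}}$ of $\theta$, shows that the observability Gram of $\Sigma_1$ equals that of $\Sigma_{*\mathrm{opt}}$.

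Since $\Sigma_{\mathrm{opt}}$ has the smallest controllability Gram among all passive realizations of $\theta$, so does $\Sigma_1$, whence $\Sigma_1$ is optimal and Lemma~\ref{opti-cont} gives $\cX_1^c\subseteq\cX_1^o$. By Theorem~\ref{optimals}(ii), $\Sigma_{*\mathrm{opt}}^*$ is the optimal minimal realization of $\theta^\#$, so the observability Gram of $\Sigma_{*\mathrm{opt}}$, hence of $\Sigma_1$, is the smallest controllability Gram among all passive realizations of $\theta^\#$; thus $\Sigma_1^*$ is optimal, and Lemma~\ref{opti-cont} applied to $\Sigma_1^*$ yields $\cX_1^o\subseteq\cX_1^c$. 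Therefore $\cX_1^c=\cX_1^o=:\cX_1^m$, and the first minimal restriction $\Sigma_1'$ of $\Sigma_1$ has state space $\cX_1^m$ (Lemma~\ref{repres}). Because $\widehat A^n\widehat B_1\in\cX_1^c=\cX_1^m$ and $\widehat A^{*n}\widehat C_1^*\in\cX_1^o=\cX_1^m$, the controllability and observability Grams of $\Sigma_1'$ coincide with those of $\Sigma_1$, hence with those of $\Sigma_{\mathrm{opt}}$ and $\Sigma_{*\mathrm{opt}}$ respectively; so $\Sigma_1'$ is a minimal passive realization of $\theta$ that is both optimal and (being observable) $^*$-optimal, and Lemma~\ref{equivopti} finishes the proof. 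The main obstacle is precisely the pair of Gram identities for the corner $\Sigma_1$: one must recognize (via Definition~\ref{defidef}) that $\varphi_\theta,\psi_\theta$ are literally the relevant transfer-function entries, that adjoining a defect space on one side turns $\Sigma_1$ into an isometric resp.\ co-isometric realization of $\left(\begin{smallmatrix}\theta\\\varphi_\theta\end{smallmatrix}\right)$ resp.\ $(\theta\ \psi_\theta)$, and keep careful track of which restrictions preserve which Gram; once these are in hand, the squeeze $\cX_1^c=\cX_1^o$ and the passage to $\Sigma_1'$ are routine.
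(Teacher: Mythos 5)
Your proposal is correct and follows the same overall architecture as the paper: reduce via Lemma \ref{equivopti} to producing one minimal passive realization that is simultaneously optimal and $^*$-optimal; obtain necessity from the Julia embedding of such a realization; and for sufficiency take a simple conservative realization of $\Theta$, extract the corner system, prove that it and its adjoint are optimal, squeeze $\cX^c=\cX^o$ with Lemma \ref{opti-cont}, and pass to a minimal restriction. Where you genuinely diverge is the central step of proving optimality of the corner: the paper applies Lemma \ref{ker-es} to the conservative realization of $\Theta$ to get the kernel identities \eqref{ekaaekv}--\eqref{tokaekv}, compares them with the defining identities for $\varphi_\theta$ and $\varphi_{\theta^\#}$, and then invokes Lemma \ref{suf-opti}; you instead observe that the relevant column (row) block of $T_{\Sigma_\Theta}$ is an isometric (co-isometric) realization of $\bigl(\begin{smallmatrix}\theta\\\varphi_\theta\end{smallmatrix}\bigr)$ (resp.\ $(\theta\ \ \psi_\theta)$), match it against the corresponding block of the Julia embedding of $\Sigma_{\mathrm{opt}}$ (resp.\ $\Sigma_{*\mathrm{opt}}$) via the unitary similarity of controllable isometric (observable co-isometric) realizations, and read off equality of the power Grams directly. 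Your route bypasses Lemma \ref{suf-opti} and the kernel computations, which is a genuine simplification; its cost is one point you should make explicit: to invoke the uniqueness theorem you need the controllable restriction of the column block to be a $\kappa$-admissible controllable isometric realization of $\bigl(\begin{smallmatrix}\theta\\\varphi_\theta\end{smallmatrix}\bigr)$, i.e.\ that $\bigl(\begin{smallmatrix}\theta\\\varphi_\theta\end{smallmatrix}\bigr)\in\mathbf{S}_\kappa$ and that $(\widehat\cX^c)^\perp$ is a Hilbert subspace. Both follow (the kernel of the augmented function dominates $K_\theta$ from below and is bounded above by Proposition \ref{maxdim}, and Proposition \ref{OnHilb} applies to the corner $\Sigma_1$, which shares its controllable subspace with the column block), but the argument is not automatic and deserves a sentence.
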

\begin{proof}
  Suppose that all $\kappa$-admissible minimal passive realizations of $\theta \in \SK$ are unitarily similar. Then it  follows from Lemma \ref{equivopti} that  every $\kappa$-admissible minimal passive realization is optimal and $^*$-optimal. Take any $\kappa$-admissible minimal passive realization $\Sigma$ of $\theta$ and consider its Julia embedding as in \eqref{Juliaemb}. Then the transfer function \eqref{embtrans} of the Julia embedding belongs to the class   $\mathbf{S}_{\kappa}\left(
                  \cU \oplus \sD_{T^*}
                 ,
                   \cY \oplus \sD_{T}
                  \right),$ and since $\Sigma$ is both optimal and $^*$-optimal, the upper right corner and lower left corner of \eqref{embtrans} are defect functions of $\theta.$ Choose $\chi_{\theta}=\chi$ in \eqref{embtrans}, and the necessity is proven.

                  Suppose then that there exists  an $\cL(\mathfrak{D}_{\psi_{\theta}},\mathfrak{D}_{\varphi_{\theta}})$-valued function $\chi_\theta$ such that $\Theta$ in  \eqref{kulmafunktio2} belongs to the class  $\mathbf{S}_{\kappa}\left(
                  \cU \oplus\sD_{\psi_{\theta}}
                 ,
                   \cY \oplus \sD_{\varphi_{\theta}}
                  \right).$  It suffices to show that there exists  minimal passive realization $\Sigma$ of $\theta$ such that it is both optimal and $^*$-optimal; see Lemma \ref{equivopti}. Let $$\Sigma_{\Theta}=(A,\widetilde{B},\widetilde{C},\widetilde{D};\cX,  \cU \oplus\sD_{\psi_{\theta}} , \cY \oplus \sD_{\varphi_{\theta}};\kappa)$$ be a simple conservative realization of $\Theta \in \mathbf{S}_{\kappa}\left(
                  \cU \oplus\sD_{\psi_{\theta}}
                 ,
                   \cY \oplus \sD_{\varphi_{\theta}}
                  \right). $ Then the system operator $T_\Theta$ of $\Sigma_\Theta$  can be represented as
                  \begin{align*}%\label{sys-Theta}
                    T_\Theta= \begin{pmatrix}
                                A & \begin{pmatrix}
                                      B & B_1
                                    \end{pmatrix} \\
                                \begin{pmatrix}
                                      C \\ C_1
                                    \end{pmatrix} & \begin{pmatrix}
                                      D & D_{12} \\
                                       D_{21} & D_{22}
                                    \end{pmatrix}
                              \end{pmatrix}: \begin{pmatrix}
                                               \cX \\
                                               \begin{pmatrix}
                                                \cU \\\sD_{\psi_{\theta}}
                                             \end{pmatrix}
                                             \end{pmatrix} \to \begin{pmatrix}
                                               \cX \\
                                                \begin{pmatrix}
                                                \cY \\\sD_{\varphi_{\theta}}
                                             \end{pmatrix}
                                             \end{pmatrix}. %\\
                                           %  \text{where}\qquad  \widetilde{B}= \begin{pmatrix}
                                      %B & B_1
                                    %\end{pmatrix}, \qquad \widetilde{C}=    \begin{pmatrix}
                                    %  C \\ C_1
                                    %\end{pmatrix}, \qquad \widetilde{D}=\begin{pmatrix}
                                    %  D & D_{12} \\
                                %    5   D_{21} & D_{22}
                                    %\end{pmatrix}
                  \end{align*} In a sufficiently small symmetric neighbourhood $\Omega$ of the origin,  it holds
                  \begin{align*}  \Theta(z)&=\begin{pmatrix}
                                   \theta(z) & \psi_{\theta}(z) \\
    \varphi_{\theta}(z) & \chi_{\theta}(z)
                                \end{pmatrix}  \\&=\begin{pmatrix}
                                      D+ zC(I-zA)^{-1}B & D_{12}+zC(I-zA)^{-1}B_1 \\
                                       D_{21}+zC_1+(I-zA)^{-1}B & D_{22}+zC_1(I-zA)^{-1}B_1  \end{pmatrix}.     \end{align*}
                                        The spaces $\sD_{\varphi_{\theta}}$ and $\sD_{\psi_{\theta}}$ are Hilbert spaces, and therefore it follows that $\sys$ is a passive realization of $\theta.$ Since $\Sigma_\Theta$ is conservative, Lemma \ref{ker-es} shows  %it follows from the results of \cite[Theorem 2.1.2]{ADRS}
                                        that
                                       \begin{align*}
                                       &I-\Theta(z)\Theta^*(w)\\&= \begin{pmatrix}
                                                                  I_{\cY}-\theta(z)\theta^*(w)-\psi_{\theta}(z)\psi_{\theta}^*(w) & -\theta(z)\varphi_{\theta}^*(w)-\psi_{\theta}(z)\chi_{\theta}^*(w) \\
                                                                  -\varphi_{\theta}(z)\theta^*(w)-\chi_{\theta}(z)\psi_{\theta}^*(w) & I_{\sD_{\varphi_{\theta}}}- \varphi_{\theta}(z)\varphi_{\theta}^*(w)-\chi_{\theta}(z)\chi_{\theta}^*(w)
                                                                \end{pmatrix} \\&=(1-\bar{w}z) \widetilde{C}(I-zA)^{-1} (I-\bar{w}A^*)^{-1}\widetilde{C}^* \\
                                                                &=(1-\bar{w}z)\begin{pmatrix}
                                                                    C(I-zA)^{-1}(I-\bar{w}A^*)^{-1}C^* &  C(I-zA)^{-1}(I-\bar{w}A^*)^{-1}C_1^* \\
                                                                     C_1(I-zA)^{-1}(I-\bar{w}A^*)^{-1}C^* &  C_1(I-zA)^{-1}(I-\bar{w}A^*)^{-1}C_1^*
                                                                  \end{pmatrix}
                                                                 \\
                                          &I-\Theta^*(w) \Theta(z)\\&= \begin{pmatrix}
                                                              I_{\cU}-\theta^*(w)\theta(z)-\varphi_{\theta}^*(w)\varphi_{\theta}(z)   & -\theta^*(w) \psi_{\theta}(z) - \varphi_{\theta}^*(w)\chi_{\theta}(z)   \\
                                                              -\psi_{\theta}^*(w) \theta(z) -\chi_{\theta}^*(w) \varphi_{\theta}(z)   & I_{\sD_{\psi_{\theta}}}- \psi_{\theta}^*(w) \psi_{\theta}(z) - \chi_{\theta}^*(w) \chi_{\theta}(z)
                                                                \end{pmatrix}
                                      %    \\&=(1-\bar{w}z)\widetilde{B}^*(I-\bar{w}A^*)^{-1})   (I-zA)^{-1} \widetilde{B}
                                          \\
                                                                &=(1-\bar{w}z)\!\begin{pmatrix}
                                                                    B^*(I-\bar{w}A^*)^{-1}(I-zA)^{-1}B &  B^*(I-\bar{w}A^*)^{-1}(I-zA)^{-1}B_1 \\
                                                                     B_1^*(I-\bar{w}A^*)^{-1}(I-zA)^{-1}B &  B_1^*(I-\bar{w}A^*)^{-1}(I-zA)^{-1}B_1
                                                                  \end{pmatrix}.
                                       \end{align*} That is,
                                       \begin{align}       I_{\cY}-\theta(z)\theta^*(w)&= (1-\bar{w}z)   C(I-zA)^{-1}(I-\bar{w}A^*)^{-1}C^*+\psi_{\theta}(z)\psi_{\theta}^*(w) \label{ekaaekv} , \\%\qekv
                                     %  \\
                                      %   I_{\cY}-{\theta^\#}^*(w)   \theta^\#(z)&= (1-\bar{w}z)   C(I-\bar{w}A)^{-1}(I-zA^*)^{-1}C^*+{ \varphi_{\theta^\#}}^*({w})    \varphi_{\theta^\#} ({z})
                                         %,
                                         %\\
                                                                I_{\cU}-\theta^*(w)\theta(z) &=  (1-\bar{w}z)B^*(I-\bar{w}A^*)^{-1}(I-zA)^{-1}B +\varphi_{\theta}^*(w)\varphi_{\theta}(z)       \label{tokaekv}   .\end{align} An easy calculation and Lemma \ref{related} show that the equation \eqref{ekaaekv} is equivalent to
                                                                \begin{align}  \label{kolmasekv}     %  I_{\cY}-\theta(z)\theta^*(w)&= (1-\bar{w}z)   C(I-zA)^{-1}(I-\bar{w}A^*)^{-1}C^*+\psi_{\theta}(z)\psi_{\theta}^*(w) \label{ekaaekv} , \\%\qekv
                                     %  \\
                                         I_{\cY}-{\theta^\#}^*(w)   \theta^\#(z)&\!=\! (1-\bar{w}z)   C(I-\bar{w}A)^{-1}(I-zA^*)^{-1}C^*+{ \varphi_{\theta^\#}}^*({w})    \varphi_{\theta^\#} ({z})
                                         %,
                                         %\\
                                             %                   I_{\cU}-\theta^*(w)\theta(z) &=  (1-\bar{w}z)B^*(I-\bar{w}A^*)^{-1}(I-zA)^{-1}B +\varphi_{\theta}^*(w)\varphi_{\theta}(z)       \label{tokaekv}
                                              .
                                             \end{align}
                                                                Let $$\Sigma'=(A',B',C',D;\cX',\cU,\cY;\kappa), \qquad  \Sigma''=(A'',B'',C'',D;\cX'',\cU,\cY;\kappa) %\Sigma_2=(A_2^*,C_2^*,B_2^*,D^*;\cX_2,\cY,\cU;\kappa)
                                                                $$
                                                                %and $\Sigma_2=(A_2,B_2,C_2,D;\cX_2,\cU,\cY;\kappa)$
                                                                 be, respectively,
                                                                an  optimal minimal
                                                                 and  a $^*$-optimal minimal
                                                                  realizations of  $\theta$. It follows from Theorem \ref{optimals} (ii) that
                                                                  ${\Sigma''}^*=({A''}^*,{C''}^*,{B''}^*,D;\cX'',\cU,\cY;\kappa) $  is an optimal minimal realization of $\theta^\#.$
                                                                  Then, by the definition of $\varphi_{\theta}$ and   $\varphi_{\theta^\#},$ it holds
                                                             {\small     \begin{align*}
                                                                    I_{\cU}-\theta^*(w)\theta(z) &=  (1-\overline{w}z)B'^*(I-\bar{w}A'^*)^{-1}(I-zA')^{-1}B' +\varphi_{\theta}^*(w)\varphi_{\theta}(z) \\
                                                                      I_{\cY}-{\theta^\#}^*(w)   \theta^\#(z)&=  (1-\overline{w}z)   C''(I-\bar{w}A'')^{-1}(I-zA''^*)^{-1}C''^*+{ \varphi_{\theta^\#}}^*({w})    \varphi_{\theta^\#} ({z}).  \end{align*} }
                                                                  It follows that \begin{align*}  B^*(I-\bar{w}A^*)^{-1}(I-zA)^{-1}B&=  B'^*(I-\bar{w}A'^*)^{-1}(I-zA')^{-1}B',\\
                                                                     C(I-\bar{w}A)^{-1}(I-zA^*)^{-1}C^* &=  C''(I-\bar{w}A'')^{-1}(I-zA''^*)^{-1}C''^*.\end{align*}  By using Lemma \ref{suf-opti}, it can be deduced  that $\Sigma$ and $\Sigma^*$ are optimal systems.  Then it follows from Lemma \ref{opti-cont} that $\cX^c=\cX^o$ and therefore $\cX^s=\cX^c=\cX^o.$ By Lemma \ref{repres}, the restriction $\Sigma_s=(A_s,B_s,C_s,D;\cX^s,\cU,\cY;\kappa)$ of $\Sigma$ to the simple subspace $\cX^s$ is simple, and it holds $A^nB=A_s^nB_s$ and ${A^*}^nC^*={A_s^*}^nC_s^*$ for every $n \in \dN_0.$ That is, $\Sigma_s$ and $\Sigma_s^*$ also are optimal systems. Moreover, they are minimal  since $\cX^s=\cX^c=\cX^o.$ It follows now from Theorem \ref{optimals} (ii) that $\Sigma_s$ is also $^*$-optimal, and the proof is complete.%   minimal realization
                                                                       \end{proof}

 \noindent \textbf{Acknowledgements}\quad I wish to thank  Seppo Hassi for helpful discussions  while preparing this paper.

\end{document}